\newcommand{\R} {\ensuremath{\mathbb{R}}}
\newcommand{\C} {\ensuremath{\mathbb{C}}}
\newcommand{\OO}{\mathcal{O}}
\renewcommand{\o}[1]{\overline{#1}}
\newcommand{\dq}{\overline{\partial}}
\newcommand{\wt}[1]{\widetilde{#1}}
\DeclareMathOperator{\Reg}{Reg}
\DeclareMathOperator{\Sing}{Sing}
\DeclareMathOperator{\Hom}{Hom}
\DeclareMathOperator{\Dom}{Dom}
\newtheorem {satz} {Satz} [section]
\newtheorem {lem} [satz] {Lemma}
\newtheorem {thm} [satz] {Theorem}
\DeclareMathOperator{\supp}{supp}
\renewcommand{\Im}{\mbox{Im }}
\newcommand{\coker}{\mbox{coker }}
\renewcommand{\theta}{\vartheta}
\title[$L^2$-theory for $\dq$ on complex spaces] 
{$L^2$-theory for the $\dq$-operator on complex spaces with isolated singularities}
\author{J. Ruppenthal}
\address{Department of Mathematics, University of Wuppertal, Gau{\ss}str. 20, 42119 Wuppertal, Germany.}
\email{ruppenthal@uni-wuppertal.de}
\date{\today}
\subjclass[2000]{32J25, 32C35, 32W05}
\keywords{Cauchy-Riemann equations, $L^2$-theory, singular complex spaces.}
\begin{document}

\begin{abstract}
We present a refined, improved $L^2$-theory for the $\dq$-operator for $(0,q)$ and $(n,q)$-forms
on Hermitian complex spaces of pure dimension $n$ with isolated singularities.
The general philosophy is to use a resolution of singularities
to obtain a regular model of the $L^2$-cohomology.
\end{abstract}

\maketitle

\section{Introduction}
The present paper is the second part of an attempt to create a systematic $L^2$-theory for the
$\dq$-operator on singular complex spaces. By a refinement and completion of the techniques
introduced in the first part \cite{eins}, we obtain a better picture for
$(0,q)$ and $(n,q)$-forms on a Hermitian complex space $X$ which is of pure dimension $n$ and has 
only isolated singularities.
The general philosophy is to use a resolution of singularities
to obtain a regular model of the $L^2$-cohomology.
This complements also the insights of {\O}vrelid and Vassiliadou in \cite{OvVa4}.

\medskip
The key element of our theory is a new kind of canonical sheaf on $X$ introduced in \cite{eins}
which we denote here by $\mathcal{K}_X^s$. It is the sheaf of germs of holomorphic square-integrable
$n$-forms which satisfy a Dirichlet boundary condition at the singular set $\Sing X$.
It comes as the kernel of the $\dq_s$-operator on square-integrable $(n,0)$-forms (see \eqref{eq:KXs}).
The $\dq_s$-operator is a localized version of the $L^2$-closure of the $\dq$-operator
acting on forms with support away from the singular set (see Section \ref{sec:review}
for the precise definition). If $X$ is a Hermitian complex space with only isolated singularities,
we showed in \cite{eins} that the $\dq_s$-complex
\begin{eqnarray}\label{eq:intro1}
0\rightarrow \mathcal{K}_X^s \hookrightarrow \mathcal{F}^{n,0} \overset{\dq_s}{\longrightarrow}
\mathcal{F}^{n,1} \overset{\dq_s}{\longrightarrow} \mathcal{F}^{n,2} \overset{\dq_s}{\longrightarrow} ... \longrightarrow \mathcal{F}^{n,n}
\rightarrow 0
\end{eqnarray}
is a fine resolution of $\mathcal{K}_X^s$, where the $\mathcal{F}^{n,q}$ are the sheaves of germs of $L^2$-forms
in the domain of the $\dq_s$-operator (see Theorem \ref{thm:exactness2} below).

\medskip
Besides that, we were able to give the following nice representation of $\mathcal{K}_X^s$
in terms of a resolution of singularities:

\begin{thm}\label{thm:Ks}{\bf (\cite{eins}, Theorem 1.10)}
Let $X$ be a Hermitian complex space of pure dimension with only isolated singularities.
Then there exists a resolution of singularities $\pi: M\rightarrow X$ with only normal crossings 
and an effective divisor $D\geq Z-|Z|$ with support on the exceptional set such that:
\begin{eqnarray}\label{eq:fix6}
\mathcal{K}_X^s \cong \pi_* \big( \mathcal{K}_M \otimes \OO(-D) \big),
\end{eqnarray}
where $\mathcal{K}_X^s$ is the canonical sheaf for the $\dq_s$-operator,
$\mathcal{K}_M$ is the usual canonical sheaf on $M$ and $Z=\pi^{-1}(\Sing X)$
the unreduced exceptional divisor.

If the exceptional set of the resolution $\pi: M\rightarrow X$ has only double self-intersections,
which is particularly the case if $\dim X=2$, 
then one can take $D=Z-|Z|$ in \eqref{eq:fix6}.
\end{thm}

By Grauert's direct image theorem \cite{Gr2}, this yields particularly that $\mathcal{K}_X^s$ is a coherent analytic sheaf.
Note that here $\pi: M\rightarrow X$ is not only a resolution of $X$,
but -- in some sense -- also a resolution of $\mathcal{K}_X^s$, making it locally free.
It is moreover not hard to show that
\begin{eqnarray*}
\pi_* \big(\mathcal{K}_M \otimes \OO(-Z) \big) \subset \mathcal{K}_X^s.
\end{eqnarray*}
This follows directly from the proof of Lemma 6.1 in \cite{eins}
and makes it reasonable to conjecture that one can always take $D=Z-|Z|$ in \eqref{eq:fix6}
(as we also know that this is possible e.g. if $\dim X=2$ or if the singularities are homogeneous so that they can be resolved
by a single blow-up).
In the present paper, we show how the $L^2$-theory from \cite{eins} can be refined if we assume that actually $D=Z-|Z|$.\footnote{
We will show e.g. how certain isomorphisms on cohomology from \cite{eins} can be realized explicitly by pull-backs or push-forwards
of differential forms.}
So, assume from now on that this is the case.

\medskip
Assume also for a moment 
that the invertible sheaf $\mathcal{K}_M\otimes\OO(|Z|-Z)$
is locally semi-positive with respect to the base space $X$, i.e. that 
any point $x\in X$ has a small neighborhood $U_x$ such that $\mathcal{K}_M\otimes\OO(|Z|-Z)$
is semi-positive on $\pi^{-1}(U_x)$ in the sense that the holomorphic line bundle associated
to $\mathcal{K}_M\otimes\OO(|Z|-Z)$ is semi-positive on $\pi^{-1}(U_x)$.
Under this assumption Takegoshi's vanishing theorem (see \cite{Ta}, Theorem I and Remark 2(a))
yields the vanishing of the higher direct image sheaves
$$R^q\pi_* \big(\mathcal{K}_M\otimes\OO(|Z|-Z)\big) = 0\ ,\  q\geq 1.$$
So, if 
\begin{eqnarray}\label{eq:intro2}
0\rightarrow \mathcal{K}_M\otimes\OO(|Z|-Z) \hookrightarrow \mathcal{G}^{n,0} \overset{\dq}{\longrightarrow}
\mathcal{G}^{n,1} \overset{\dq}{\longrightarrow} \mathcal{G}^{n,2} \longrightarrow ...
\end{eqnarray}
is a fine resolution of $\mathcal{K}_M\otimes\OO(|Z|-Z)$, then the direct image complex $\pi_* \mathcal{G}^{n,*}$
is another fine resolution of $\mathcal{K}_X^s$ by use of the Leray spectral sequence.
This means that the cohomology of the $L^2$-complex \eqref{eq:intro1} is canonically
isomorphic to the cohomology of the complex \eqref{eq:intro2}.
In other words, \eqref{eq:intro2} is a smooth model for the $L^2$-cohomology of the $\dq_s$-complex \eqref{eq:intro1}.
By use of the $L^2$-version of Serre duality, this also leads to a smooth realization of the
$L^2$-cohomology with respect to the $\dq$-operator in the sense of distributions for $(0,q)$-forms
(see \cite{eins}, Theorem 1.6).

\bigskip
In \cite{eins}, we showed how the semi-positivity condition can be dropped by use of a more sophisticated use of the Leray spectral sequence.
This had the disadvantage that there is no explicit realization of the mappings on cohomology that occur.
However, if $D=Z-|Z|$ in \eqref{eq:fix6}, then we can refine the techniques from \cite{eins} and obtain
the first main result of the present paper:

\begin{thm}\label{thm:main1}
Let $X$ be a Hermitian complex space of pure dimension $n\geq 2$ with only isolated singularities
and $\pi: M\rightarrow X$ a resolution of singularities with only normal crossings such that
$$\mathcal{K}_X^s \cong \pi_* \big( \mathcal{K}_M \otimes \OO(|Z|-Z) \big),$$
where $\mathcal{K}_X^s$ is the canonical sheaf for the $\dq_s$-operator
(i.e. the canonical sheaf of holomorphic $(n,0)$-forms with Dirichlet boundary condition),
$\mathcal{K}_M$ is the usual canonical sheaf on $M$ and $Z=\pi^{-1}(\Sing X)$
the unreduced exceptional divisor.

Then the pull-back of forms under $\pi$ induces for $p\geq 1$ natural exact sequences 
\begin{eqnarray}\label{eq:intro3}
0 \rightarrow H^p(X,\mathcal{K}_X^s) &\overset{[\pi^*_p]}{\longrightarrow}& H^p(M, \mathcal{K}_M\otimes\OO(|Z|-Z))
\longrightarrow \Gamma(X,\mathcal{R}^p) \rightarrow 0,\\
0 \rightarrow H^p_{cpt}(X,\mathcal{K}_X^s) &\overset{[\pi^*_p]}{\longrightarrow}& H^p_{cpt}(M, \mathcal{K}_M\otimes\OO(|Z|-Z))
\longrightarrow \Gamma(X,\mathcal{R}^p) \rightarrow 0,\label{eq:intro4}
\end{eqnarray}
where $\mathcal{R}^p$ is the higher direct image sheaf $R^p\pi_* (\mathcal{K}_M\otimes\OO(|Z|-Z))$.
\end{thm}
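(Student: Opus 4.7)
The sheaf isomorphism $\mathcal{K}_X^s \cong \pi_*(\mathcal{K}_M\otimes\OO(|Z|-Z))$ is precisely Theorem \ref{thm:Ks} from \cite{eins}, so the task reduces to establishing the two exact sequences \eqref{eq:intro3} and \eqref{eq:intro4}. My plan is to build everything around the Leray spectral sequence of $\pi$ applied to $\mathcal{F}:=\mathcal{K}_M\otimes\OO(|Z|-Z)$,
$$E_2^{p,q} = H^p\bigl(X, R^q\pi_*\mathcal{F}\bigr) \Longrightarrow H^{p+q}(M,\mathcal{F}),$$
together with its compactly supported analogue, which is available because $\pi$ is proper. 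Since $\pi$ restricts to a biholomorphism $M\setminus Z \to X\setminus \Sing X$, the higher direct images $\mathcal{R}^q:=R^q\pi_*\mathcal{F}$ for $q\geq 1$ are supported on the discrete set $\Sing X$. They are therefore direct sums of skyscraper sheaves and consequently $H^p(X,\mathcal{R}^q)=0$ for all $p,q\geq 1$. The $E_2$-page collapses onto its two axes, with $E_2^{p,0}=H^p(X,\mathcal{K}_X^s)$ and $E_2^{0,q}=\Gamma(X,\mathcal{R}^q)$, and the problem reduces to showing that the transgressions $d_r\colon E_r^{0,r-1}\to E_r^{r,0}$ vanish for every $r\geq 2$.

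To prove this vanishing I would first argue locally. For each $x\in\Sing X$, fix a Stein open neighborhood $U\Subset X$ with $U\cap\Sing X=\{x\}$. Since $\mathcal{K}_X^s$ is coherent (from the first assertion together with Grauert's direct image theorem), Cartan's Theorem B gives $H^p(U,\mathcal{K}_X^s)=0$ for $p\geq 1$. Running the Leray spectral sequence for $\pi|_{\pi^{-1}(U)}$ then makes the horizontal axis vanish in positive degrees, forcing local degeneration for trivial reasons. The local edge map
$$H^p\bigl(\pi^{-1}(U),\mathcal{F}\bigr)\;\xrightarrow{\;\sim\;}\;(\mathcal{R}^p)_x, \qquad p\geq 1,$$
is thus an isomorphism, and in particular every stalk class in $(\mathcal{R}^p)_x$ is representable by a $\dq$-closed form $\omega_x$ defined on the whole of $\pi^{-1}(U)$ (after shrinking $U$ if necessary).

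To globalize, pick pairwise disjoint Stein neighborhoods $U'_x\Subset U_x$ of the points of $\Sing X$ together with cut-off functions $\chi_x\in C_c^\infty(U_x)$ satisfying $\chi_x\equiv 1$ on $U'_x$. Given $\sigma=(\sigma_x)\in\Gamma(X,\mathcal{R}^p)$ represented by forms $\omega_x$ as above, the patched form $\omega:=\sum_x(\pi^*\chi_x)\omega_x$ is globally defined on $M$, and its $\dq$ is supported in the annular region $\bigsqcup_x\pi^{-1}(U_x\setminus U'_x)\subset M\setminus Z$, on which $\pi$ is a biholomorphism onto $\bigsqcup_x(U_x\setminus\{x\})\subset X\setminus\Sing X$. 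The $L^2$-$\dq$-solvability theory developed in \cite{eins}, applied on each pair $U'_x\Subset U_x$ and transported through $\pi$, produces a correction $\tau$ satisfying $\dq\tau=\dq\omega$ and $\tau\equiv 0$ on each $\pi^{-1}(U'_x)$. Then $\omega-\tau$ is a globally $\dq$-closed form on $M$ whose class maps to $\sigma$ under the edge map, establishing its surjectivity and, through the structure of the spectral sequence, forcing all transgressions to vanish. This yields \eqref{eq:intro3}. The compactly supported variant \eqref{eq:intro4} is handled by the identical argument run for compactly supported cohomology, using properness of $\pi$ and the fact that the skyscraper sheaves $\mathcal{R}^q$ are insensitive to the support condition.

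The principal obstacle is precisely the correction step: the form $\tau$ must be produced with the correct $L^2$-regularity, respecting the twist by $\OO(|Z|-Z)$, and genuinely localized away from the exceptional fibres $\pi^{-1}(x)$. This is where the delicate $L^2$-$\dq$-solvability results of \cite{eins}, rather than a classical Dolbeault argument, are indispensable; everything else is bookkeeping in the Leray spectral sequence.
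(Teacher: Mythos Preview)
Your Leray spectral sequence framework is equivalent to the paper's Section~\ref{sec:cohomology}: the short exact sequences \eqref{eq:intro3} and \eqref{eq:intro4} are exactly what drops out once you know the $E_2$-page is concentrated on the two axes and all transgressions vanish. The reduction to the skyscraper structure of $\mathcal{R}^q$ and the use of Cartan~B on Stein neighbourhoods to obtain the local isomorphism $H^p(\pi^{-1}(U),\mathcal{F})\cong(\mathcal{R}^p)_x$ are fine and match the paper.

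The genuine gap is your correction step. You want a global $\tau$ on $M$ with $\dq\tau=\dq\omega$ and $\tau\equiv 0$ near each $\pi^{-1}(x)$. Invoking ``the $L^2$-$\dq$-solvability theory of \cite{eins} on each pair $U'_x\Subset U_x$'' does not produce this: solving $\dq\tau_x=\dq\omega$ on $\pi^{-1}(U_x)$ (or on the annulus) gives a local $\tau_x$ that you cannot extend by zero across $\partial\pi^{-1}(U_x)$, because nothing forces $\tau_x$ to vanish near the outer boundary. What you would actually need is $[\dq\omega]=0$ in $H^{p+1}_{cpt}(\pi^{-1}(U_x),\mathcal{F})$, and the results of \cite{eins} do not give that. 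The paper avoids the issue entirely: rather than patching and correcting, it replaces each $\omega_x$ by a compactly supported representative. The key input is Takegoshi's vanishing $R^p\pi_*\mathcal{K}_M=0$: since $Z-|Z|$ is effective one has $\mathcal{K}_M\otimes\OO(|Z|-Z)\hookrightarrow\mathcal{K}_M$, so $\omega_x$ viewed as a plain $\mathcal{K}_M$-valued form is locally $\dq$-exact, $\omega_x=\dq\eta_x$ near $\pi^{-1}(x)$; then $\dq(\chi_x\eta_x)$ is compactly supported, agrees with $\omega_x$ near $\pi^{-1}(x)$, and is again $L_{|Z|-Z}$-valued because that bundle is trivial off $E$. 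Summing these compactly supported representatives gives a global closed form directly.

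There is a second point you do not address. The theorem asserts that the injection $H^p(X,\mathcal{K}_X^s)\hookrightarrow H^p(M,\mathcal{K}_M\otimes\OO(|Z|-Z))$ is the map induced by pull-back of $L^2$-forms. The abstract Leray edge map does not give this automatically: one has to verify that pulling back a form in $\mathcal{F}^{n,p}$ actually lands in $\mathcal{C}^{n,p}_\sigma(L_{|Z|-Z})$, i.e.\ that the pull-back of a $\dq_s$-form on $X$ acquires the correct vanishing order along the exceptional divisor. This is the content of Lemma~\ref{lem:inclusion}, whose proof is a non-trivial local computation in normal-crossing coordinates and is what allows the paper to realise the edge map concretely as $[\pi^*_p]$.
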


The proof of Theorem \ref{thm:main1} is based on the following observation.
If $\mathcal{G}^{n,*}$ is a fine resolution as in \eqref{eq:intro2},
then the non-exactness of the direct image complex $\pi_*\mathcal{G}^{n,*}$
can be expressed by the higher direct image sheaves $\mathcal{R}^p=R^p\pi_* \mathcal{K}_M\otimes \OO(|Z|-Z)$, $p\geq1$.
These are skyscraper sheaves for $X$ has only isolated singularities. 
So, they are acyclic. On the other hand, global sections in $\mathcal{R}^p$ can be expressed
globally by $L^2$-forms with compact support. These two properties allow to express
the cohomology of the canonical sheaf $\mathcal{K}_X^s$ in terms of the cohomology of the direct image complex $\pi_*\mathcal{G}^{n,*}$
modulo global sections in $\mathcal{R}^p$.
Another difficulty is to show that the exact sequences in Theorem \ref{thm:main1} are actually
induced by the pull-back of $L^2$-forms (see Lemma \ref{lem:inclusion}).
We will also see that the surjections in \eqref{eq:intro3} and \eqref{eq:intro4} are simply induced by taking germs of differential forms.
So, we obtain an explicit and localized version of Theorem 1.11 in \cite{eins}.

\bigskip
Another objective of the present paper is to give an $L^2$-version of Theorem \ref{thm:main1}.
By exploiting the fact that the $L^2$- and the $L^2_{loc}$-Dolbeault cohomology are naturally isomorphic
on strongly pseudoconvex domains in complex manifolds,
we are able to achieve that.
This is pretty interesting because it allows to carry over our results to $(0,q)$-forms by the use of the $L^2$-version of Serre duality.
If $N$ is any Hermitian complex manifold, let
$$\dq_{cpt}: A^{p,q}_{cpt}(N) \rightarrow A^{p,q+1}_{cpt}(N)$$
be the $\dq$-operator on smooth forms with compact support in $N$.
Then we denote by
$$\dq_{max}: L^{p,q}(N) \rightarrow L^{p,q+1}(N)$$
the maximal and by
$$\dq_{min}: L^{p,q}(N) \rightarrow L^{p,q+1}(N)$$
the minimal closed Hilbert space extension of the operator $\dq_{cpt}$
as densely defined operator from $L^{p,q}(N)$ to $L^{p,q+1}(N)$.
Let $H^{p,q}_{max}(N)$ be the $L^2$-Dolbeault cohomology on $N$ with respect
to the maximal closed extension $\dq_{max}$, i.e. the $\dq$-operator in the sense of distributions on $N$,
and $H^{p,q}_{min}(N)$ the $L^2$-Dolbeault cohomology with respect to the minimal closed
extension $\dq_{min}$.

We have the following $L^2$-version of the exact sequence \eqref{eq:intro4} in Theorem \ref{thm:main1}:

\begin{thm}\label{thm:main19a}
Let $(X,h)$ be a Hermitian complex space of pure dimension $n\geq 2$ with only isolated singularities and $\pi: M\rightarrow X$ a resolution of singularities
with only normal crossings such that $\mathcal{K}_X^s=\pi_*\big( \mathcal{K}_M\otimes \OO(|Z|-Z)\big)$ as above.

Let $0\leq p < n$, $\Omega\subset\subset X$ a relatively compact domain,
$\wt{\Omega}:=\pi^{-1}(\Omega)$ and $\Omega^*=\Omega-\Sing X$.
Provide $\wt{\Omega}$ with a (regular) Hermitian metric which is equivalent to $\pi^* h$
close to the boundary $b\wt{\Omega}$.

Let $Z:=\pi^{-1}(\Omega\cap \Sing X)$ denote now just the unreduced exceptional divisor over $\Omega$
and let $L_{|Z|-Z}\rightarrow M$ be a Hermitian holomorphic line bundle such that holomorphic sections in $L_{|Z|-Z}$
correspond to holomorphic sections in $\OO(|Z|-Z)$.

Then the pull-back of forms $\pi^*$ induces a natural injective homomorphism
\begin{eqnarray*}
h_p: H^{n,p}_{min}(\Omega^*) \longrightarrow H^{n,p}_{min}(\wt{\Omega},L_{|Z|-Z}),
\end{eqnarray*}
with $\coker h_p = \Gamma(\Omega,R^p\pi_* (\mathcal{K}_M\otimes\OO(|Z|-Z))$ if $p\geq 1$,
and $h_0$ is an isomorphism.
\end{thm}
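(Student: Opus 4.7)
The plan is to transport the proof of Theorem \ref{thm:main1}---which expresses the non-exactness of $\pi_*\mathcal{G}^{n,*}$ via the skyscraper higher direct images $\mathcal{R}^p$---into the $L^2$-category on the relatively compact domain $\Omega$. The natural isomorphism between $L^2$- and $L^2_{loc}$-Dolbeault cohomology on strongly pseudoconvex manifolds, mentioned in the introduction, serves as the dictionary between $\dq_{min}$-$L^2$-cohomology and compactly supported sheaf cohomology, and $h_p$ is to be realized as the honest pull-back of $L^2$-forms, not as an abstract connecting homomorphism.

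First, I would construct $\pi^*$ at the cochain level. Off the exceptional divisor, $\pi|_{\wt{\Omega}\setminus Z}$ is biholomorphic onto $\Omega^*$, and the hypothesis that the metric on $\wt{\Omega}$ is equivalent to $\pi^*h$ near $b\wt{\Omega}$ makes pull-back a bounded map $L^{n,p}(\Omega^*)\to L^{n,p}_{loc}(\wt{\Omega}\setminus Z)$, isometric up to bounded factors outside any neighbourhood of $Z$. Working in normal-crossings coordinates around a point of $Z=\{z_1^{a_1}\cdots z_k^{a_k}=0\}$, the vanishing order forced on $\pi^*\omega$ by $L^2$-integrability of $\omega$ matches exactly the pole order encoded in $\OO(|Z|-Z)$; this is the cochain-level incarnation of the identification $\mathcal{K}_X^s \cong \pi_*(\mathcal{K}_M\otimes\OO(|Z|-Z))$, so that $\pi^*\omega$ extends to an $L^2$-section of $L_{|Z|-Z}$-valued $(n,p)$-forms on all of $\wt{\Omega}$. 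Since smooth forms compactly supported in $\Omega^*$ pull back to smooth forms compactly supported in $\wt{\Omega}\setminus Z\subset\wt{\Omega}$, and $\pi^*$ commutes with $\dq$ where $\pi$ is biholomorphic, a density argument---this is essentially what Lemma \ref{lem:inclusion} is for---shows that $\pi^*$ sends $\Dom(\dq_{min,\Omega^*})$ into $\Dom(\dq_{min,\wt{\Omega},L_{|Z|-Z}})$ and intertwines the two $\dq_{min}$-operators, giving $h_p$ on cohomology.

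Second, I would pass to sheaf cohomology via an exhaustion. Choose a cofinal family of strongly pseudoconvex subdomains $\Omega_j\subset\subset\Omega$, each already containing the finitely many points of $\Sing X\cap\Omega$; then the lifts $\wt{\Omega}_j=\pi^{-1}(\Omega_j)$ are strongly pseudoconvex in $M$. On each $\wt{\Omega}_j$ the $L^2$/$L^2_{loc}$ equivalence identifies $H^{n,p}_{min}(\wt{\Omega}_j,L_{|Z|-Z})$ with the compactly supported sheaf cohomology $H^p_{cpt}(\wt{\Omega}_j,\mathcal{K}_M\otimes\OO(|Z|-Z))$, and an analogous identification using the fine $\dq_s$-resolution of $\mathcal{K}_X^s$ from Theorem \ref{thm:exactness2} gives $H^{n,p}_{min}(\Omega_j^*)\cong H^p_{cpt}(\Omega_j,\mathcal{K}_X^s)$. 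The compactly supported sequence \eqref{eq:intro4} from Theorem \ref{thm:main1}, applied to $\Omega_j$, then produces the exact sequence of the present theorem at level $j$; since $\mathcal{R}^p$ is a skyscraper sheaf supported at $\Sing X\cap\Omega$ and already present in each $\Omega_j$, one has $\Gamma(\Omega_j,\mathcal{R}^p)=\Gamma(\Omega,\mathcal{R}^p)$, and a cofinality argument delivers the desired sequence on $\Omega$. For $p=0$ the sequence degenerates to the $L^2$-incarnation of the pull-back isomorphism $\Gamma(\Omega,\mathcal{K}_X^s)\cong\Gamma(\wt{\Omega},\mathcal{K}_M\otimes\OO(|Z|-Z))$ of Theorem \ref{thm:main1}.

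The main obstacle is the second step: one must verify that the abstract cohomological identifications of $L^2_{min}$-cohomology with compactly supported sheaf cohomology are genuinely \emph{induced by} $\pi^*$, and not just some abstractly isomorphic surrogate. The metric hypothesis near $b\wt{\Omega}$ is indispensable here, as it is what keeps $\pi^*$ a bounded Hilbert-space operator all the way to the boundary, so that Hilbert-space closures, fine resolutions, and the passage to the limit along the exhaustion all commute with pull-back. Lemma \ref{lem:inclusion} is the technical place where this compatibility must be pinned down, and without it the formally correct exact sequence would not be realized by the pull-back map $h_p$.
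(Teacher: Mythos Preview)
Your first step---constructing the chain-level map $\pi^*:\Dom\dq_{min}(\Omega^*)\to\Dom\dq_{min}(\wt\Omega,L_{|Z|-Z})$ via the normal-crossings computation and Lemma \ref{lem:inclusion}---is exactly what the paper does, and your remark that the metric hypothesis near $b\wt\Omega$ is what makes $\pi^*$ bounded is correct.

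The gap is in your second step. An exhaustion by strongly pseudoconvex $\Omega_j\Subset\Omega$ gives you, on each level $j$, the identification $H^{n,p}_{min}(\Omega_j^*)\cong H^p_{cpt}(\Omega_j,\mathcal K_X^s)$ and hence the desired short exact sequence on $\Omega_j$. Passing to the direct limit does yield $\varinjlim_j H^p_{cpt}(\Omega_j,\mathcal K_X^s)=H^p_{cpt}(\Omega,\mathcal K_X^s)$, but your ``cofinality argument'' tacitly assumes $\varinjlim_j H^{n,p}_{min}(\Omega_j^*)\cong H^{n,p}_{min}(\Omega^*)$ (and similarly on the $\wt\Omega$-side), which you do not justify and which is not automatic: the $\dq_{min}$-condition on $\Omega^*$ is a global $L^2$-boundary condition at $b\Omega$, and a $\dq_{min}$-closed form on $\Omega^*$ need not be cohomologous to one supported in some $\Omega_j$ when $\Omega$ has no regularity assumption. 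Your argument therefore establishes the exact sequence for $H^p_{cpt}(\Omega,\mathcal K_X^s)$, which is \eqref{eq:intro4} again, not for $H^{n,p}_{min}(\Omega^*)$.

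The paper avoids this by never taking a limit. It first proves the theorem for a \emph{single} strongly pseudoconvex $\Omega$ (Theorem \ref{thm:main15}); the identification $H^q(\Gamma_{cpt}(\Omega,\mathcal F^{n,*}))\cong H^{n,q}_{min}(\Omega^*)$ in that case (Theorem \ref{thm:l2representation}) already requires Grauert's bump method and $L^2$-Serre duality, not merely the $L^2$/$L^2_{loc}$ equivalence. For a general relatively compact $\Omega$, the paper then uses a Mayer--Vietoris sequence for the $\dq_{min}$-complex: one writes $\Omega=U\cup V$ with $V\Subset\Omega$ a strongly pseudoconvex neighbourhood of $\Sing X\cap\Omega$ and $U=\Omega\setminus\overline W$ disjoint from the singularities, so that $h_q^V$ is controlled by Theorem \ref{thm:main15} while $h_q^U$ and $h_q^{U\cap V}$ are trivially isomorphisms (no singularities, metrics match). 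The five-lemma then gives injectivity of $h_q^{U\cup V}=h_q$ together with the correct cokernel. This localises the hard analysis to $V$ and handles the boundary of $\Omega$ for free, which is exactly what your limit cannot do.
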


Note that singularities in the boundary $b\Omega$ of $\Omega$ are permitted.
Any regular metric on $M$ will do the job if there are no singularities in the boundary of $\Omega$.
If $\Omega=X$ is compact, then the case $p=n$ can be included (see Theorem \ref{thm:mainc}).

The proof of Theorem \ref{thm:main19a} requires a detailed comparison of $\dq_{max}$- with $L^2_{loc}$-cohomology 
and of $\dq_{min}$-cohomology with cohomology with compact support (see Section \ref{sec:dolbeault}).
Theorem \ref{thm:main19a} appears below as Theorem \ref{thm:main19}.

\bigskip
Besides the $L^2$-Serre duality between the $\dq_{min}$- and the $\dq_{max}$-Dolbeault cohomology,
there is (for $p\geq 1$) another duality between the higher direct image sheaves
$R^p\pi_* \mathcal{K}_M\otimes \OO(|Z|-Z)$
on one hand and the flabby cohomology $H^{n-p}_E$ of $\OO(Z-|Z|)$ with support on the exceptional set $E=|Z|$
on the other hand. This is explained in Section \ref{subsec:HE}.
Combination of these two kinds of duality with Theorem \ref{thm:main19a}
will lead to our second main result:

\begin{thm}\label{thm:main2}
Under the assumptions of Theorem \ref{thm:main19a}, let $0 \leq  q \leq n$ if $\Omega=X$ is compact and $0 < q \leq n$ otherwise.
Then there exists a natural exact sequence
\begin{eqnarray*}
0 \rightarrow H^q_E(\wt{\Omega},\OO(Z-|Z|)) \rightarrow H^{0,q}_{max}(\wt{\Omega},L_{Z-|Z|}) \rightarrow H^{0,q}_{max}(\Omega^*) \rightarrow 0,
\end{eqnarray*}
where $H_E^*$ is the flabby cohomology with support on the exceptional set $E=|Z|$.
In case $q=n$, $H^n_E(\wt{\Omega},\OO(Z-|Z|))$ has to replaced by $0$.
\end{thm}

Note again that singularities in the boundary $b\Omega$ of $\Omega$ are permitted
and that any regular metric on $M$ will do the job if there are no singularities in the boundary of $\Omega$.
Note also that $H^0_E(\wt{\Omega},\OO(Z-|Z|))=0$ by the identity theorem.

\bigskip
The idea to identify the kernel of the natural map
$$H^{0,q}_{max}(\wt{\Omega},L_{Z-|Z|}) \rightarrow H^{0,q}_{max}(\Omega^*)$$
as the flabby cohomology of $\OO(Z-|Z|)$ with support on $E$
is inspired by the work of {\O}vrelid and Vassiliadou \cite{OvVa4} who proved Theorem \ref{thm:main2}
recently in the cases $q=n-1$ and $q=n$ (see \cite{OvVa4}, Theorem 1.4 and Corollary 1.6).
After the present paper appeared as a preprint, {\O}vrelid and Vassiliadou
added another proof of our Theorem \ref{thm:main2} for $0<q<n-1$ also to their paper (see \cite{OvVa4}, Remark 4.5.1).
Their method is quite different from our approach via Theorem \ref{thm:main1}.\footnote{
We may remark that {\O}vrelid and Vassiliadou also use some results from \cite{eins}.}

For $0<q<n-1$, one can show that
\begin{eqnarray*}
\frac{H^{0,q}_{max}(\wt{\Omega},L_{Z-|Z|})}{H^q_E(\wt{\Omega},\OO(Z-|Z|))}
&\cong& H^{0,q}_{max}\big(\wt{\Omega}\big),
\end{eqnarray*}
so that we can recover by use of Theorem \ref{thm:main2} the following, somewhat nicer result of
{\O}vrelid and Vassiliadou (\cite{OvVa4}, Theorem 1.3):
\begin{eqnarray*}
H^{0,q}_{max}\big(\wt{\Omega}\big) &\cong& H^{0,q}_{max}(\Omega^*)\ , \ \ 0<q<n-1.
\end{eqnarray*}

\medskip
If $X$ is a projective surface with only isolated singularities and $\pi: M\rightarrow X$ a resolution of singularities
with only normal crossings, then {\O}vrelid and Vassiliadou showed that $H^1_E(M,\OO(Z-|Z|))=0$
(see \cite{OvVa4}, Corollary 5.2).

Combining this with Theorem \ref{thm:main2} and Theorem A from \cite{PS1}
or Theorem 1.5 from \cite{eins}, we obtain the following result for a projective surface
considered as a Hermitian space with the restriction of the Fubini-Study metric:

\begin{thm}\label{thm:main3}
Let $X$ be a projetive surface with only isolated singularities
and $\pi: M\rightarrow X$ a resolution of singularities with only normal crossings.
Then there exist for all $0\leq q\leq 2$ natural isomorphisms
$$H^{2,q}_{max}(X-\Sing X) \rightarrow H^{2,q}(M).$$
Let $Z:=\pi^{-1}(\Sing X)$ be the unreduced exceptional divisor.
Then there exist for all $0\leq q\leq 2$ natural isomorphisms
$$H^{0,q}(M,L_{Z-|Z|}) \rightarrow H^{0,q}_{max}(X-\Sing X),$$
where $L_{Z-|Z|}\rightarrow M$ is a Hermitian holomorphic line bundle such that holomorphic sections in $L_{Z-|Z}$
correspond to sections in $\OO(Z-|Z|)$.
\end{thm}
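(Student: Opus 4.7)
The plan is to assemble Theorem \ref{thm:main2} with the two vanishing inputs cited in the paragraph just before the statement of Theorem \ref{thm:main3}. For the $(0,q)$-part, I would apply Theorem \ref{thm:main2} with the compact choice $\Omega = X$, so that $\wt{\Omega}=M$, $\Omega^* = X-\Sing X$, and all of $0\leq q\leq n=2$ is admissible. For each such $q$ one obtains a natural short exact sequence
$$0 \to H^q_E(M,\OO(Z-|Z|)) \to H^{0,q}_{max}(M,L_{Z-|Z|}) \to H^{0,q}_{max}(X-\Sing X) \to 0,$$
with the left-hand term replaced by $0$ in the top degree $q=2$. In degree $q=0$ the flabby group vanishes by the identity theorem (as remarked after Theorem \ref{thm:main2}), and in degree $q=1$ it vanishes by {\O}vrelid--Vassiliadou's Corollary 5.2 in \cite{OvVa4}. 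Hence the right-hand arrow is a natural isomorphism in each of the three degrees.

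To pass to the sheaf-theoretic $H^{0,q}(M, L_{Z-|Z|})$ that appears in the statement of Theorem \ref{thm:main3}, I would then use the standard fact that on the \emph{compact} complex manifold $M$, endowed with a smooth Hermitian metric and the smooth Hermitian structure on $L_{Z-|Z|}$, the $\dq_{max}$-Dolbeault cohomology coincides with the usual smooth Dolbeault cohomology; this is immediate from elliptic Hodge theory (all harmonic representatives are smooth). Composing the two isomorphisms yields the first half of Theorem \ref{thm:main3}. For the $(2,q)$-isomorphism $H^{2,q}_{max}(X-\Sing X) \to H^{2,q}(M)$, the cleanest route is to quote directly Theorem A of \cite{PS1} (or Theorem 1.5 of \cite{eins}), whose specialization to a projective surface with only isolated singularities, equipped with the restriction of the Fubini--Study metric, is exactly the required natural isomorphism.

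The genuine obstacle in this program is not a calculation but a compatibility check: one must confirm that the natural arrow produced by Theorem \ref{thm:main2}, after the identification of $L^2$- with smooth cohomology on the compact manifold $M$, really is the map one would write down by restriction / pull-back through $\pi$ on the regular locus, and that this matches the map produced by Theorem A of \cite{PS1} on the $(2,q)$-side. This is essentially built into the construction: the cokernel map in Theorem \ref{thm:main2} is simply restriction of $L^2$-classes along the biholomorphism $M-|Z| \cong X-\Sing X$, while the map in \cite{PS1} is its inverse on the top-degree side. Once the pull-back / restriction conventions are matched, no further computation is needed; the whole proof is therefore a two-line assembly of prior results once the vanishing $H^1_E(M,\OO(Z-|Z|))=0$ from \cite{OvVa4} is granted.
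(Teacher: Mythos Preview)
Your proposal is correct and follows essentially the same route as the paper: the paper's proof is precisely the one-paragraph assembly you describe, combining the vanishing $H^1_E(M,\OO(Z-|Z|))=0$ from \cite{OvVa4} with Theorem \ref{thm:main2} (applied with $\Omega=X$ compact) for the $(0,q)$-part, and invoking Theorem~A of \cite{PS1} or Theorem~1.5 of \cite{eins} for the $(2,q)$-part. Your additional remarks on identifying $\dq_{max}$- with smooth Dolbeault cohomology on the compact manifold $M$ and on the compatibility of the natural maps are appropriate elaborations that the paper leaves implicit.
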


This gives an almost complete description of the $L^2$-cohomology
of a projective surface with isolated singularities. Only the middle cohomology $H^{1,1}$ is missing.
Theorem \ref{thm:main3} was conjectured and mostly proven by Pardon and Stern in \cite{PS1}
(there was a difficulty with the critical group $H^{0,1}_{max}$).
This difficulty has been first overcome completely by {\O}vrelid and Vassiliadou in \cite{OvVa4}.

\bigskip

The organization of the present paper is as follows. In the Sections \ref{sec:cohomology}, \ref{sec:review} and \ref{sec:KXs},
we provide the necessary tools for the proof of Theorem \ref{thm:main1} in Section \ref{sec:main1}.
More precisely, in Section \ref{sec:cohomology} we show how to deal with the cohomology of non-exact complexes as e.g.
the direct image complex $\pi_* \mathcal{G}^{n,*}$
mentioned above. Section \ref{sec:review} contains a review of the $\dq_s$-complex as it is introduced in \cite{eins}.
This comprises the definition of the canonical sheaf of holomorphic $n$-forms with Dirichlet boundary condition $\mathcal{K}_X^s$
(see \eqref{eq:KXs}) and the exactness of the $\dq_s$-complex \eqref{eq:intro1} (cf. Theorem \ref{thm:exactness2}).
In Section \ref{sec:KXs}, we recall from \cite{eins} how $\mathcal{K}_X^s$ can be represented as 
the direct image of an invertible sheaf under a resolution of singularities (see Theorem \ref{thm:Ks}).
Using all these preliminaries, we prove Theorem \ref{thm:main1} in Section \ref{sec:main1}.

In Section \ref{sec:dolbeault} we study how the concepts that appear in the context of Theorem \ref{thm:main1}
are related to $L^2$-Dolbeault cohomology and prove Theorem \ref{thm:main19a}. Section \ref{sec:main2}
finally contains the proof of our second main result, Theorem \ref{thm:main2}.

\bigskip

An outline of the historical development of the topic can be found in the introduction
of the previous paper \cite{eins} and in \cite{OvVa4}.

\bigskip
{\bf Acknowledgements.} The author thanks Nils {\O}vrelid for many interesting and very helpful discussions on the topic.
This research was supported by the Deutsche Forschungsgemeinschaft (DFG, German Research Foundation), 
grant RU 1474/2 within DFG's Emmy Noether Programme.

\bigskip

\section{Cohomology of non-exact complexes}\label{sec:cohomology}

Let $X$ be a paracompact Hausdorff space.
In this section, we consider a complex of sheaves of abelian groups
\begin{eqnarray}\label{eq:complexA1}
0\rightarrow \mathcal{A} \hookrightarrow \mathcal{A}^0 \overset{a_0}{\longrightarrow} \mathcal{A}^1 
\overset{a_1}{\longrightarrow} \mathcal{A}^2 \overset{a_2}{\longrightarrow} \mathcal{A}^3 \longrightarrow  ...
\end{eqnarray}
over $X$
which is exact at $\mathcal{A}$ and $\mathcal{A}^0$ such that $\mathcal{A}\cong\ker a_0$, but not necessarily exact at $\mathcal{A}^p$, $p\geq 1$.
We denote by $\mathcal{K}^p=\ker a_p$ the kernel of $a_p$ and by $\mathcal{I}^p=\Im a_p$ the image of $a_p$.
Note that $\mathcal{K}^0\cong \mathcal{A}$. We will now represent the $q$-th (flabby) cohomology group $H^q(X,\mathcal{A})$ of $\mathcal{A}$ over $X$
by use of the $q$-th cohomology group of the complex \eqref{eq:complexA1}.
Since \eqref{eq:complexA1} is not a resolution of $\mathcal{A}$,
this also involves the quotient sheaves
\begin{eqnarray}\label{eq:complexA2}
\mathcal{R}^p := \mathcal{K}^p / \mathcal{I}^{p-1}\ , \ \ p\geq 1,
\end{eqnarray}
which are well-defined since $a_p\circ a_{p-1}=0$.
To compute the (flabby) cohomology of $\mathcal{A}$, we first require that the sheaves $\mathcal{A}^p$
are acyclic:

\begin{lem}\label{lem:complexA1}
Assume that the sheaves $\mathcal{A}^p$, $p\geq 0$, in the complex \eqref{eq:complexA1}
are acyclic. Then there are natural isomorphisms
\begin{eqnarray}\label{eq:complexA3}
\delta^0: \frac{\Gamma(X,\mathcal{I}^p)}{a_p(\Gamma(X,\mathcal{A}^p))} &\overset{\cong}{\longrightarrow}& H^1(X,\mathcal{K}^p)\ \ ,\\
\delta^q: H^q(X,\mathcal{I}^p) &\overset{\cong}{\longrightarrow}& H^{q+1}(X,\mathcal{K}^p)\label{eq:complexA4}
\end{eqnarray}
for all $p\geq 0$, $q\geq 1$.
\end{lem}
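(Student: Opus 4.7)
The plan is to reduce the statement to the long exact sequence of sheaf cohomology coming from a single short exact sequence. First, I would observe that the factorization of $a_p: \mathcal{A}^p \to \mathcal{A}^{p+1}$ through its image sheaf yields by definition the short exact sequence
\begin{eqnarray*}
0 \to \mathcal{K}^p \hookrightarrow \mathcal{A}^p \overset{a_p}{\longrightarrow} \mathcal{I}^p \to 0.
\end{eqnarray*}
Here $\mathcal{I}^p$ is the sheafification of the presheaf image, so surjectivity holds as a statement about stalks; this is the only (entirely routine) point at which one has to be careful, as $a_p$ need not be surjective on sections over open sets.

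Next, since $X$ is paracompact Hausdorff, (flabby = \v{C}ech = derived-functor) sheaf cohomology produces the long exact sequence
\begin{eqnarray*}
\cdots \to H^q(X,\mathcal{K}^p) \to H^q(X,\mathcal{A}^p) \to H^q(X,\mathcal{I}^p) \overset{\delta^q}{\longrightarrow} H^{q+1}(X,\mathcal{K}^p) \to H^{q+1}(X,\mathcal{A}^p) \to \cdots
\end{eqnarray*}
By the acyclicity hypothesis, $H^q(X,\mathcal{A}^p)=0$ for all $q\geq 1$. Thus for $q\geq 1$ the connecting homomorphism $\delta^q$ sits between two zeros and is an isomorphism, yielding \eqref{eq:complexA4}. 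For $q=0$ the sequence collapses to
\begin{eqnarray*}
\Gamma(X,\mathcal{A}^p) \overset{a_p}{\longrightarrow} \Gamma(X,\mathcal{I}^p) \overset{\delta^0}{\longrightarrow} H^1(X,\mathcal{K}^p) \to 0,
\end{eqnarray*}
and exactness identifies the quotient $\Gamma(X,\mathcal{I}^p)/a_p(\Gamma(X,\mathcal{A}^p))$ with $H^1(X,\mathcal{K}^p)$ via $\delta^0$, which is exactly \eqref{eq:complexA3}.

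Naturality of both isomorphisms is inherited from the naturality of the connecting homomorphism in sheaf cohomology. There is no real obstacle here beyond verifying the short exact sequence above at the stalk level; once that is in hand the lemma follows formally from the long exact sequence and the acyclicity hypothesis.
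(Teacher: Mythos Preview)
Your argument is correct and matches the paper's proof essentially line for line: the same short exact sequence $0\to\mathcal{K}^p\to\mathcal{A}^p\to\mathcal{I}^p\to 0$, the same use of acyclicity of $\mathcal{A}^p$ in the long exact sequence, and the same reading of the connecting maps. The only difference is that the paper spells out naturality by writing down the explicit commutative squares coming from a morphism of complexes, whereas you simply invoke naturality of the connecting homomorphism---which is of course equivalent.
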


In this lemma and throughout the whole section, we may as well consider global sections $\Gamma_{cpt}$ 
and cohomology $H_{cpt}$ with compact support.

\begin{proof}
We shortly repeat the proof which is standard.
For any $p\geq 0$, we consider the short exact sequence
\begin{eqnarray}\label{eq:complexA5}
0\rightarrow \mathcal{K}^p \hookrightarrow \mathcal{A}^p \overset{a_p}{\longrightarrow} \mathcal{I}^p\rightarrow 0.
\end{eqnarray}
Using $H^r(X,\mathcal{A}^p)=0$ for $r\geq 1$, we obtain the exact cohomology sequences
\begin{eqnarray*}
&0 \rightarrow \Gamma(X,\mathcal{K}^p) \hookrightarrow \Gamma(X,\mathcal{A}^p) \overset{a_p}{\longrightarrow}
 \Gamma(X,\mathcal{I}^p) \overset{\delta^0}{\longrightarrow} H^1(X,\mathcal{K}^p) \rightarrow 0\ ,&\\
& 0 \rightarrow H^q(X,\mathcal{I}^p) \overset{\delta^q}{\longrightarrow} H^{q+1}(X,\mathcal{K}^p) \rightarrow 0\ ,\ q\geq 1,&
\end{eqnarray*}
which yield the statement of the lemma.

The assertion that the isomorphisms are natural means the following.
Let 
\begin{eqnarray}\label{eq:complexA11}
0\rightarrow \mathcal{B} \hookrightarrow \mathcal{B}^0 \overset{b_0}{\longrightarrow} \mathcal{B}^1 
\overset{b_1}{\longrightarrow} \mathcal{B}^2 \overset{b_2}{\longrightarrow} \mathcal{B}^3 \longrightarrow  ...
\end{eqnarray}
be another such complex and
$$
\begin{xy}
  \xymatrix{
      0 \ar[r] & \mathcal{A} \ar[r] \ar[d]^f    &   \mathcal{A}^* \ar[d]^g  \\
      0 \ar[r] & \mathcal{B} \ar[r]             &   \mathcal{B}^*   
  }
\end{xy}$$
a morphism of complexes (where we abbreviate the complex \eqref{eq:complexA1} by $0\rightarrow \mathcal{A} \rightarrow \mathcal{A}^*$
and the complex \eqref{eq:complexA11} by $0\rightarrow \mathcal{B} \rightarrow \mathcal{B}^*$). 
If we denote by $\mathcal{L}^p=\ker b_p$ the kernel of $b_p$ and by $\mathcal{J}^p=\Im b_p$ the image of $b_p$,
then
$$
\begin{xy}
  \xymatrix{
    \frac{\Gamma(X,\mathcal{I}^p)}{a_p(\Gamma(X,\mathcal{A}^p))}   \ar[r]^{\delta_0} \ar[d]^{g'_{p+1}}    &   H^1(X,\mathcal{K}^p) \ar[d]^{g''_{p}}  \\
    \frac{\Gamma(X,\mathcal{J}^p)}{b_p(\Gamma(X,\mathcal{B}^p))}  \ar[r]^{\delta_0}             &   H^1(X,\mathcal{L}^p)  
  }
\end{xy}$$
is commutative, where $g'_{p+1}$ and $g''_{p}$ are the maps induced by the commutative diagram
$$
\begin{xy}
  \xymatrix{
      0 \ar[r] & \mathcal{K}^p \ar[r] \ar[d]^{g_p}    &   \mathcal{A}^p \ar[r]^{a_p} \ar[d]^{g_p} & \mathcal{I}^p \ar[r] \ar[d]^{g_{p+1}}& 0  \\
      0 \ar[r] & \mathcal{L}^p \ar[r]             &   \mathcal{B}^p \ar[r]^{b_p}  & \mathcal{J}^p \ar[r] & 0.
  }
\end{xy}$$
The statement that the isomorphisms $\delta^q$ are natural follows analogously.
\end{proof}

To go on, we need some assumptions on $\mathcal{R}^p$:

\begin{lem}\label{lem:complexA2}
For $p\geq 1$, assume that the quotient sheaf $\mathcal{R}^{p}$ of the complex \eqref{eq:complexA1}
defined in \eqref{eq:complexA2} is acyclic and that the natural mapping $\Gamma(X,\mathcal{K}^p) \rightarrow \Gamma(X,\mathcal{R}^p)$
is surjective. Then there is a natural isomorphism
\begin{eqnarray*}
H^q(X,\mathcal{I}^{p-1}) \overset{\cong}{\longrightarrow} H^q(X,\mathcal{K}^{p})
\end{eqnarray*}
for all $q\geq 1$ (induced by the inclusion $\mathcal{I}^{p-1} \hookrightarrow \mathcal{K}^{p}$).
\end{lem}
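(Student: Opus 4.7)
The plan is to apply the long exact cohomology sequence to the short exact sequence
\begin{eqnarray*}
0 \rightarrow \mathcal{I}^{p-1} \hookrightarrow \mathcal{K}^{p} \rightarrow \mathcal{R}^p \rightarrow 0,
\end{eqnarray*}
which is well-defined by the definition \eqref{eq:complexA2} of $\mathcal{R}^p$ together with $\mathcal{I}^{p-1}\subset\mathcal{K}^p$ (a consequence of $a_p\circ a_{p-1}=0$). The inclusion $\mathcal{I}^{p-1}\hookrightarrow \mathcal{K}^p$ is precisely the map that should induce the claimed isomorphism.

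From the associated long exact sequence in (flabby) cohomology, using that $\mathcal{R}^p$ is acyclic, i.e.\ $H^r(X,\mathcal{R}^p)=0$ for all $r\geq 1$, one immediately reads off
\begin{eqnarray*}
H^q(X,\mathcal{I}^{p-1}) \overset{\cong}{\longrightarrow} H^q(X,\mathcal{K}^p)
\end{eqnarray*}
for every $q\geq 2$. The relevant piece of the long exact sequence for the case $q=1$ is
\begin{eqnarray*}
\Gamma(X,\mathcal{K}^p) \rightarrow \Gamma(X,\mathcal{R}^p) \overset{\delta}{\longrightarrow} H^1(X,\mathcal{I}^{p-1}) \rightarrow H^1(X,\mathcal{K}^p) \rightarrow H^1(X,\mathcal{R}^p)=0.
\end{eqnarray*}
Surjectivity of $H^1(X,\mathcal{I}^{p-1})\rightarrow H^1(X,\mathcal{K}^p)$ is already contained in the above vanishing. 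Injectivity follows because the connecting homomorphism $\delta$ is zero: by hypothesis the restriction $\Gamma(X,\mathcal{K}^p)\rightarrow \Gamma(X,\mathcal{R}^p)$ is surjective, so its cokernel, which equals $\Im\delta$, vanishes.

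The argument is completely formal once the two hypotheses are in place, and there is no real obstacle. The only subtle point worth flagging is the role of the surjectivity assumption on global sections: without it, one would only obtain an exact sequence
\begin{eqnarray*}
0\rightarrow \Gamma(X,\mathcal{R}^p)/\mathrm{image} \rightarrow H^1(X,\mathcal{I}^{p-1}) \rightarrow H^1(X,\mathcal{K}^p)\rightarrow 0,
\end{eqnarray*}
and the isomorphism would fail in degree $q=1$. Naturality of the resulting isomorphisms in the sense of Lemma \ref{lem:complexA1} follows from the naturality of the long exact sequence associated to a morphism of short exact sequences of sheaves, applied to the morphism of complexes.
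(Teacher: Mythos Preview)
Your proof is correct and follows exactly the same approach as the paper: apply the long exact cohomology sequence to the short exact sequence $0 \rightarrow \mathcal{I}^{p-1} \hookrightarrow \mathcal{K}^{p} \rightarrow \mathcal{R}^p \rightarrow 0$, use acyclicity of $\mathcal{R}^p$ for $q\geq 2$, and the surjectivity hypothesis on global sections to kill the connecting homomorphism for $q=1$. Your write-up is in fact more explicit than the paper's, which simply states that the result follows directly from the long exact sequence.
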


\begin{proof}
Under the assumptions,
the proof follows directly from the long exact cohomology sequence that is obtained from the short
exact sequence
\begin{eqnarray}\label{eq:complexR1}
0 \rightarrow \mathcal{I}^{p-1} \hookrightarrow \mathcal{K}^{p} \longrightarrow \mathcal{R}^{p} = \mathcal{K}^p / \mathcal{I}^{p-1} \rightarrow 0.
\end{eqnarray}
If $0\rightarrow \mathcal{B} \rightarrow \mathcal{B}^*$ is another such complex,
then we obtain commutative diagrams as in the proof of Lemma \ref{lem:complexA1} showing
that the isomorphism is natural.
\end{proof}

Note that the assumptions of Lemma \ref{lem:complexA2} are trivially fulfilled if the complex \eqref{eq:complexA1}
is exact such that $\mathcal{R}^p = 0$ for all $p\geq 1$. 
From Lemma \ref{lem:complexA1} and Lemma \ref{lem:complexA2}, we deduce by induction:

\begin{lem}\label{lem:complexA3}
Under the assumptions of Lemma \ref{lem:complexA1} and Lemma \ref{lem:complexA2},
there are
natural isomorphisms
\begin{eqnarray*}
\gamma^p: \frac{\Gamma(X,\mathcal{I}^{p-1})}{a_{p-1}(\Gamma(X,\mathcal{A}^{p-1}))} &\overset{\cong}{\longrightarrow}& H^p(X,\mathcal{A})
\end{eqnarray*}
for all $p\geq 1$. Here, natural means the following. If $0\rightarrow \mathcal{B}\rightarrow \mathcal{B}^*$
is another such complex as in \eqref{eq:complexA11} and
$$\begin{xy}
  \xymatrix{
      0 \ar[r] & \mathcal{A} \ar[r] \ar[d]^f    &   \mathcal{A}^* \ar[d]^g  \\
      0 \ar[r] & \mathcal{B} \ar[r]             &   \mathcal{B}^*   
  }
\end{xy}$$
a morphism of complexes as in the proof of Lemma \ref{lem:complexA1}, then we obtain a commutative diagram
$$
\begin{xy}
  \xymatrix{
    \frac{\Gamma(X,\mathcal{I}^{p-1})}{a_{p-1}(\Gamma(X,\mathcal{A}^{p-1}))}   \ar[r]^{\gamma^p} \ar[d]^{g'_{p}}    &   H^p(X,\mathcal{A}) \ar[d]^{f_{p}}  \\
    \frac{\Gamma(X,\mathcal{J}^{p-1})}{b_{p-1}(\Gamma(X,\mathcal{B}^{p-1}))}  \ar[r]^{\gamma^p}             &   H^p(X,\mathcal{B})  ,
  }
\end{xy}$$
where $\mathcal{J}^{p-1} = \Im b_{p-1}$, $f_p$ is the map on cohomology induced by $f: \mathcal{A}\rightarrow \mathcal{B}$
and $g'_{p}$ is the map induced by $g_p: \mathcal{A}^p \rightarrow \mathcal{B}^p$.
If $f$ is an isomorphism, then $g'_{p}$ is an isomorphism as well.
\end{lem}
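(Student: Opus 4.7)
The plan is to build $\gamma^p$ as a composition of a zig-zag of the isomorphisms already supplied by Lemma \ref{lem:complexA1} and Lemma \ref{lem:complexA2}. Starting from $\Gamma(X,\mathcal{I}^{p-1})/a_{p-1}(\Gamma(X,\mathcal{A}^{p-1}))$, the map $\delta^0$ from Lemma \ref{lem:complexA1} (applied with its index $p$ taken to be $p-1$) lands in $H^1(X,\mathcal{K}^{p-1})$. For $p\geq 2$, Lemma \ref{lem:complexA2} then identifies $H^1(X,\mathcal{K}^{p-1})$ with $H^1(X,\mathcal{I}^{p-2})$, and $\delta^1$ of Lemma \ref{lem:complexA1} pushes this up to $H^2(X,\mathcal{K}^{p-2})$. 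Continuing alternately, I obtain a staircase
$$H^1(X,\mathcal{K}^{p-1}) \cong H^1(X,\mathcal{I}^{p-2}) \cong H^2(X,\mathcal{K}^{p-2}) \cong \cdots \cong H^{p-1}(X,\mathcal{I}^0) \cong H^p(X,\mathcal{K}^0),$$
which terminates at $H^p(X,\mathcal{A})$ via $\mathcal{K}^0 \cong \mathcal{A}$. Formally, the cleanest presentation is by induction on $p$: the base case $p=1$ is immediate from Lemma \ref{lem:complexA1} applied with its parameter $p$ equal to $0$, and the inductive step couples one application of Lemma \ref{lem:complexA2} (which shifts $\mathcal{I}^{p-2}\leadsto \mathcal{K}^{p-1}$ without changing the cohomological degree) to one application of Lemma \ref{lem:complexA1} (which shifts the cohomological degree).

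Naturality of $\gamma^p$ then comes for free, since each link in the chain is by construction natural with respect to morphisms of complexes: the commutative square exhibited in the proof of Lemma \ref{lem:complexA1} and the analogous square obtained from Lemma \ref{lem:complexA2} (by applying the snake lemma to the morphism between the two short exact sequences \eqref{eq:complexR1}) can be pasted together at each index. The only point that needs mild care is verifying that the leftmost vertical map really is $g'_{p}$, i.e.\ the map induced on $\Gamma(\mathcal{I}^{p-1})/a_{p-1}\Gamma(\mathcal{A}^{p-1})$ by $g_p$; this follows from $g_p\circ a_{p-1} = b_{p-1}\circ g_{p-1}$, which is part of the data of a morphism of complexes. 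The rightmost vertical map at the end of the staircase is the flabby-cohomology map $f_p$ induced by $f: \mathcal{A}\to \mathcal{B}$, because $f$ is recovered as the restriction of $g_0$ to $\ker a_0$.

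For the final assertion, if $f$ is an isomorphism then the induced map $f_p: H^p(X,\mathcal{A}) \to H^p(X,\mathcal{B})$ is an isomorphism. Combining this with the just-established commutative square and the fact that both copies of $\gamma^p$ are isomorphisms immediately forces $g'_p$ to be an isomorphism. The main obstacle I anticipate is not the existence of $\gamma^p$ but the careful bookkeeping needed to confirm that the zig-zag of commutative squares composes cleanly to the single square claimed in the statement; once the indices and the identifications $\mathcal{K}^0\cong \mathcal{A}$, $\mathcal{L}^0\cong\mathcal{B}$ are lined up, everything is formal.
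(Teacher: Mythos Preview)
Your proposal is correct and follows exactly the approach the paper indicates: the paper simply states that the lemma is deduced ``by induction'' from Lemma~\ref{lem:complexA1} and Lemma~\ref{lem:complexA2}, and you have supplied precisely that inductive staircase, together with the bookkeeping for naturality and the final assertion. There is nothing to add.
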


We can now make the connection to the $p$-th cohomology group of the complex \eqref{eq:complexA1}
by use of:

\begin{lem}\label{lem:complexA4}
For $p\geq 1$, assume that the natural mapping $\Gamma(X,\mathcal{K}^p)\rightarrow \Gamma(X,\mathcal{R}^p)$ 
is surjective.
Then there is a natural injective homomorphism
\begin{eqnarray*}
i^p: \frac{\Gamma(X,\mathcal{I}^{p-1})}{a_{p-1}(\Gamma(X,\mathcal{A}^{p-1}))}
\longrightarrow
H^p(\Gamma(X,\mathcal{A}^*)) = \frac{\Gamma(X,\mathcal{K}^p)}{a_{p-1}(\Gamma(X,\mathcal{A}^{p-1}))}
\end{eqnarray*}
with $\coker i^p = \Gamma(X,\mathcal{R}^p)$. 
More precisely, the natural sequence
\begin{eqnarray*}
0\rightarrow \frac{\Gamma(X,\mathcal{I}^{p-1})}{a_{p-1}(\Gamma(X,\mathcal{A}^{p-1}))}
\overset{i^p}{\longrightarrow}
H^p(\Gamma(X,\mathcal{A}^*)) \longrightarrow \Gamma(X,\mathcal{R}^p) \rightarrow 0
\end{eqnarray*}
is exact.
\end{lem}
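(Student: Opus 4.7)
The plan is to deduce the entire statement from the short exact sequence of sheaves
$$0 \to \mathcal{I}^{p-1} \hookrightarrow \mathcal{K}^p \longrightarrow \mathcal{R}^p \to 0,$$
which holds by the very definition $\mathcal{R}^p = \mathcal{K}^p/\mathcal{I}^{p-1}$ in \eqref{eq:complexA2} and was already used in the proof of Lemma \ref{lem:complexA2}. First I would apply the left-exact global-sections functor $\Gamma(X,-)$, obtaining
$$0 \to \Gamma(X,\mathcal{I}^{p-1}) \to \Gamma(X,\mathcal{K}^p) \to \Gamma(X,\mathcal{R}^p),$$
and then invoke the standing hypothesis that $\Gamma(X,\mathcal{K}^p)\to\Gamma(X,\mathcal{R}^p)$ is surjective to upgrade this to a short exact sequence of abelian groups.

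Next, since $a_p\circ a_{p-1}=0$, the subgroup $a_{p-1}(\Gamma(X,\mathcal{A}^{p-1}))$ is contained in $\Gamma(X,\mathcal{I}^{p-1})$ and hence also in $\Gamma(X,\mathcal{K}^p)$. Quotienting the first two terms of the above short exact sequence by this common subgroup, the third isomorphism theorem yields exactly
$$0 \to \frac{\Gamma(X,\mathcal{I}^{p-1})}{a_{p-1}(\Gamma(X,\mathcal{A}^{p-1}))} \overset{i^p}{\longrightarrow} \frac{\Gamma(X,\mathcal{K}^p)}{a_{p-1}(\Gamma(X,\mathcal{A}^{p-1}))} \longrightarrow \Gamma(X,\mathcal{R}^p) \to 0,$$
and the middle group coincides with $H^p(\Gamma(X,\mathcal{A}^*))$ since $\mathcal{K}^p=\ker a_p$ identifies $\Gamma(X,\mathcal{K}^p)$ with the space of global $p$-cocycles of $\Gamma(X,\mathcal{A}^*)$. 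This gives both the claimed injection $i^p$ and the identification of its cokernel as $\Gamma(X,\mathcal{R}^p)$.

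Naturality with respect to a morphism of complexes $g: \mathcal{A}^*\to\mathcal{B}^*$ is then automatic: the component maps $g_p$ commute with the differentials and therefore induce morphisms on the subsheaves $\mathcal{K}^p$, $\mathcal{I}^{p-1}$ and on the quotient sheaves $\mathcal{R}^p$, all compatible with the short exact sequence above; functoriality of $\Gamma(X,-)$ transfers this compatibility to the constructed three-term sequence of global sections. I do not anticipate a genuine obstacle here; the lemma is a formal book-keeping step, and the only point requiring a moment's care is to verify that the map $\Gamma(X,\mathcal{I}^{p-1}) \to \Gamma(X,\mathcal{K}^p)$ induced by the sheaf inclusion really is injective (immediate from the left-exactness of $\Gamma(X,-)$), so that $i^p$ is well-defined and injective after quotienting by the common subgroup.
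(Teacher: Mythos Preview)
Your proof is correct and essentially identical to the paper's: both start from the short exact sequence $0\to\mathcal{I}^{p-1}\to\mathcal{K}^p\to\mathcal{R}^p\to 0$, take global sections, use the surjectivity hypothesis to get a short exact sequence of groups, and then quotient the first two terms by the common subgroup $a_{p-1}(\Gamma(X,\mathcal{A}^{p-1}))$. You have added a little more detail on naturality and on the identification $\Gamma(X,\mathcal{K}^p)=\ker\big(a_p:\Gamma(X,\mathcal{A}^p)\to\Gamma(X,\mathcal{A}^{p+1})\big)$, but the argument is the same.
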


\begin{proof}
From \eqref{eq:complexR1} we obtain by use of the assumption the exact sequence
$$0\rightarrow \Gamma(X,\mathcal{I}^{p-1}) \longrightarrow \Gamma(X,\mathcal{K}^p) \longrightarrow \Gamma(X,\mathcal{R}^p) \rightarrow 0,$$
and this induces the natural exact sequence
$$0\rightarrow \frac{\Gamma(X,\mathcal{I}^{p-1})}{a_{p-1}(\Gamma(X,\mathcal{A}^{p-1}))} 
\overset{i^p}{\longrightarrow}
\frac{\Gamma(X,\mathcal{K}^p)}{a_{p-1}(\Gamma(X,\mathcal{A}^{p-1}))}
\longrightarrow \Gamma(X,\mathcal{R}^p) \rightarrow 0$$
since
$$a_{p-1}(\Gamma(X,\mathcal{A}^{p-1})) \subset \Gamma(X,\mathcal{I}^{p-1}) \subset \Gamma(X,\mathcal{K}^p).$$
\end{proof}

Combining Lemma \ref{lem:complexA3} and Lemma \ref{lem:complexA4}, we conclude finally:

\begin{thm}\label{thm:complexA1}
Under the assumptions of Lemma \ref{lem:complexA1} and Lemma \ref{lem:complexA2},
there is for all $p\geq 1$ a natural injective homomorphism
\begin{eqnarray*}
i^p\circ (\gamma^p)^{-1}: H^p(X,\mathcal{A}) \longrightarrow H^p(\Gamma(X,\mathcal{A}^*))
\end{eqnarray*}
with $\coker i^p\circ (\gamma^p)^{-1} = \Gamma(X,\mathcal{R}^p)$. 
More precisely, there is a natural exact sequence
\begin{eqnarray*}
0\rightarrow H^p(X,\mathcal{A}) \overset{i^p\circ (\gamma^p)^{-1}}{\longrightarrow} H^p(\Gamma(X,\mathcal{A}^*))
\longrightarrow \Gamma(X,\mathcal{R}^p) \rightarrow 0.
\end{eqnarray*}
Here, natural means the following. If $0\rightarrow \mathcal{B}\rightarrow \mathcal{B}^*$
is another such complex as in \eqref{eq:complexA11} and
$$
\begin{xy}
  \xymatrix{
      0 \ar[r] & \mathcal{A} \ar[r] \ar[d]^f    &   \mathcal{A}^* \ar[d]^g  \\
      0 \ar[r] & \mathcal{B} \ar[r]             &   \mathcal{B}^*   
  }
\end{xy}$$
a morphism of complexes, then we obtain the commutative diagram
\begin{eqnarray*}
\begin{xy}
  \xymatrix{
    \Gamma(X,\mathcal{R}^p) \ar[d]^{g_p'''} & H^p(\Gamma(X,\mathcal{A}^*)) \ar[l] \ar[d]^{[g_p]}& 
\frac{\Gamma(X,\mathcal{I}^{p-1})}{a_{p-1}(\Gamma(X,\mathcal{A}^{p-1}))} \ar[l]_{i^p}  \ar[r]^{\gamma^p} \ar[d]^{g'_{p}}    &   H^p(X,\mathcal{A}) \ar[d]^{f_{p}}  \\
    \Gamma(X,\mathcal{S}^p) & H^p(\Gamma(X, \mathcal{B}^*))  \ar[l] &
\frac{\Gamma(X,\mathcal{J}^{p-1})}{b_{p-1}(\Gamma(X,\mathcal{B}^{p-1}))} \ar[l]_{i^p} \ar[r]^{\gamma^p}             &   H^p(X,\mathcal{B})  ,
  }
\end{xy}
\end{eqnarray*}
where $f_p$ is the map on cohomology induced by $f: \mathcal{A}\rightarrow \mathcal{B}$
and $[g_p]$, $g'_{p}$, $g_p'''$ are the maps induced by $g_p: \mathcal{A}^p \rightarrow \mathcal{B}^p$.
The quotient sheaves $\mathcal{S}^p$ are defined for the complex $0\rightarrow \mathcal{B}\rightarrow \mathcal{B}^*$
analogously to the sheaves $\mathcal{R}^p$ for the complex $0\rightarrow \mathcal{A}\rightarrow \mathcal{A}^*$.
The maps $\gamma^p$ are bijective and the maps $i^p$ are injective.
\end{thm}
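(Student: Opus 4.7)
The plan is to assemble Theorem \ref{thm:complexA1} directly from Lemma \ref{lem:complexA3} and Lemma \ref{lem:complexA4}, which together already contain all the essential content. Lemma \ref{lem:complexA3} supplies the natural isomorphism
$$\gamma^p:\frac{\Gamma(X,\mathcal{I}^{p-1})}{a_{p-1}(\Gamma(X,\mathcal{A}^{p-1}))}\overset{\cong}{\longrightarrow} H^p(X,\mathcal{A}),$$
while Lemma \ref{lem:complexA4} (whose hypothesis $\Gamma(X,\mathcal{K}^p)\twoheadrightarrow \Gamma(X,\mathcal{R}^p)$ is guaranteed under the standing acyclicity assumptions of Lemmas \ref{lem:complexA1}--\ref{lem:complexA2}) provides the injective homomorphism $i^p$ out of the same quotient, together with the short exact sequence whose cokernel is $\Gamma(X,\mathcal{R}^p)$. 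Composing yields the map $i^p\circ(\gamma^p)^{-1}:H^p(X,\mathcal{A})\to H^p(\Gamma(X,\mathcal{A}^*))$ of the statement, whose injectivity is immediate.

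I would then transport the exact sequence of Lemma \ref{lem:complexA4} through $\gamma^p$, replacing the left term by $H^p(X,\mathcal{A})$, to obtain
$$0\rightarrow H^p(X,\mathcal{A})\overset{i^p\circ(\gamma^p)^{-1}}{\longrightarrow} H^p(\Gamma(X,\mathcal{A}^*))\longrightarrow\Gamma(X,\mathcal{R}^p)\rightarrow 0.$$
Exactness is inherited directly from Lemma \ref{lem:complexA4}, and in particular $\coker (i^p\circ(\gamma^p)^{-1})=\Gamma(X,\mathcal{R}^p)$.

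For the naturality assertion, observe that a morphism of complexes $g:\mathcal{A}^*\to\mathcal{B}^*$ covering $f:\mathcal{A}\to\mathcal{B}$ restricts to morphisms $\mathcal{K}^p\to\mathcal{L}^p$ and $\mathcal{I}^{p-1}\to\mathcal{J}^{p-1}$, and hence induces a sheaf morphism $\mathcal{R}^p\to\mathcal{S}^p$ on quotients. Applying $\Gamma(X,\cdot)$ to the short exact sequence $0\to\mathcal{I}^{p-1}\to\mathcal{K}^p\to\mathcal{R}^p\to 0$ provides the functoriality of the surjection onto $\Gamma(X,\mathcal{R}^p)$, yielding the commutativity of the left-hand square. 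The middle square is precisely the naturality statement already contained in Lemma \ref{lem:complexA4}, and the right-hand square is the naturality statement already contained in Lemma \ref{lem:complexA3}. Pasting the three squares produces the full commutative diagram of the statement.

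The main obstacle here is really only bookkeeping: one must verify that the two competing descriptions of the middle vertical map---namely the map $[g_p]$ induced on cohomology of the global section complexes by $g_p:\mathcal{A}^p\to\mathcal{B}^p$, and the map one obtains by chasing through $(\gamma^p)^{-1}$, $g'_{p}$, and $i^p$---coincide. This follows from the explicit constructions of $\gamma^p$ and $i^p$ in the proofs of Lemmas \ref{lem:complexA3} and \ref{lem:complexA4}, where both arise as morphisms induced on quotients by the inclusions $\mathcal{I}^{p-1}\hookrightarrow \mathcal{K}^p\hookrightarrow \mathcal{A}^p$, which are manifestly functorial in the complex. No new analytic or geometric idea is needed beyond the two preceding lemmas.
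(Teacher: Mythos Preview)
Your proposal is correct and follows exactly the same approach as the paper, which simply states that the theorem is obtained by combining Lemma \ref{lem:complexA3} and Lemma \ref{lem:complexA4}. Your write-up is in fact more detailed than the paper's own treatment; one minor phrasing quibble is that the surjectivity $\Gamma(X,\mathcal{K}^p)\twoheadrightarrow\Gamma(X,\mathcal{R}^p)$ is an explicit standing hypothesis of Lemma \ref{lem:complexA2} rather than a consequence of acyclicity, but since the theorem assumes all hypotheses of Lemmas \ref{lem:complexA1} and \ref{lem:complexA2}, this does not affect the argument.
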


In the present paper, we need the following consequence of Theorem \ref{thm:complexA1}.
Here, we make use of our general assumption that $X$ is a paracompact Hausdorff space,
because this implies that fine sheaves are acyclic.

\begin{thm}\label{thm:complexA2}
Let $X$, $M$ be paracompact Hausdorff spaces and $\pi: M \rightarrow X$ a continuous map.
Let $\mathcal{C}$ be a sheaf (of abelian groups) over $M$ and 
\begin{eqnarray}\label{eq:complexC1}
0\rightarrow \mathcal{C} \hookrightarrow \mathcal{C}^0 \overset{c_0}{\longrightarrow} \mathcal{C}^1 \overset{c_1}{\longrightarrow} \mathcal{C}^2
\overset{c_2}{\longrightarrow} \mathcal{C}^3 \longrightarrow...
\end{eqnarray}
a fine resolution. Let $\mathcal{A} \cong \pi_* \mathcal{C}$ be a sheaf on $X$,
isomorphic to the direct image of $\mathcal{C}$,
and $0\rightarrow \mathcal{A}\rightarrow \mathcal{A}^*$ a fine resolution of $\mathcal{A}$ over $X$.

Let $\mathcal{B} := \pi_* \mathcal{C}$ be the direct image of $\mathcal{C}$
and $\mathcal{B}^* = \pi_* \mathcal{C}^*$ the direct image complex
(which is again fine but not necessarily exact).
Since \eqref{eq:complexC1} is a fine resolution,
the non-exactness of $0\rightarrow \mathcal{B} \rightarrow \mathcal{B}^*$
is measured as above by the higher direct image sheaves $\mathcal{S}^p:=R^p \pi_* \mathcal{C}$, $p\geq 1$.
Let 
$$
\begin{xy}
  \xymatrix{
      0 \ar[r] & \mathcal{A} \ar[r] \ar[d]_{\cong}^{f}    &   \mathcal{A}^* \ar[d]^g  \\
      0 \ar[r] & \mathcal{B} \ar[r]             &   \mathcal{B}^*   
  }
\end{xy}$$
be a morphism of complexes,
and assume that the complex $0\rightarrow \mathcal{B} \rightarrow \mathcal{B}^*$ satisfies the assumption
of Lemma \ref{lem:complexA2},
i.e. that the direct image sheaves $\mathcal{S}^p$ are acyclic and that the maps
$\Gamma(X,\ker b_p) \rightarrow \Gamma(X,\mathcal{S}^p)$
are surjective for all $p\geq 1$. 

Then $g$ induces for all $p\geq 1$ a natural injective homomorphism
\begin{eqnarray*}
H^p(\Gamma(X,\mathcal{A}^*)) \overset{[g_p]}{\longrightarrow} H^p(\Gamma(X,\mathcal{B}^*))
\end{eqnarray*}
with $\coker [g_p] = \Gamma(X,\mathcal{S}^p)$.

More precisely, there is a natural exact sequence
\begin{eqnarray*}
0 \rightarrow H^p(\Gamma(X,\mathcal{A}^*)) \overset{[g_p]}{\longrightarrow} H^p(\Gamma(X,\mathcal{B}^*)) 
\longrightarrow \Gamma(X,\mathcal{S}^p) \rightarrow 0.
\end{eqnarray*}
In this sequence, one can replace $H^p(\Gamma(X,\mathcal{B}^*))$ by $H^p(\Gamma(M,\mathcal{C}^*))$
because
\begin{eqnarray*}
\Gamma(X,\mathcal{B}^q) &=& \Gamma(\pi^{-1}(X),\mathcal{C}^q) = \Gamma(M,\mathcal{C}^q),\\
b_q(\Gamma(X,\mathcal{B}^q)) &=& c_q(\Gamma(M,\mathcal{C}^q))
\end{eqnarray*}
by definition for all $q\geq 0$.
\end{thm}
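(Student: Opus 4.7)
The plan is to apply Theorem \ref{thm:complexA1} separately to the two complexes $0 \to \mathcal{A} \to \mathcal{A}^*$ and $0 \to \mathcal{B} \to \mathcal{B}^*$ and then to glue the resulting short exact sequences together via the morphism $(f,g)$, exploiting the naturality clause.

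For the complex $0 \to \mathcal{A} \to \mathcal{A}^*$, I observe that since it is a fine resolution over the paracompact Hausdorff space $X$, the sheaves $\mathcal{A}^p$ are acyclic and the quotient sheaves $\mathcal{R}^p$ all vanish. Both hypotheses of Lemmas \ref{lem:complexA1} and \ref{lem:complexA2} are therefore satisfied trivially, and Theorem \ref{thm:complexA1} reduces to an isomorphism $H^p(X, \mathcal{A}) \cong H^p(\Gamma(X, \mathcal{A}^*))$ realized by $i^p \circ (\gamma^p)^{-1}$. For the complex $0 \to \mathcal{B} \to \mathcal{B}^*$, the sheaves $\mathcal{B}^q = \pi_*\mathcal{C}^q$ are again fine and hence acyclic on $X$, so Lemma \ref{lem:complexA1} applies; the $p$-th quotient sheaf of this complex is by construction $\mathcal{S}^p = R^p\pi_*\mathcal{C}$, for which the hypotheses of Lemma \ref{lem:complexA2} hold by assumption. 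Theorem \ref{thm:complexA1} consequently produces the natural exact sequence
\[ 0 \to H^p(X, \mathcal{B}) \overset{i^p\circ(\gamma^p)^{-1}}{\longrightarrow} H^p(\Gamma(X, \mathcal{B}^*)) \longrightarrow \Gamma(X, \mathcal{S}^p) \to 0. \]

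Next I would invoke the naturality clause of Theorem \ref{thm:complexA1}: the morphism $(f,g)$ of complexes furnishes a commutative diagram of the form displayed there, connecting the $\mathcal{A}^*$-sequence to the $\mathcal{B}^*$-sequence, with vertical maps $f_p: H^p(X, \mathcal{A}) \to H^p(X, \mathcal{B})$ and $[g_p]: H^p(\Gamma(X, \mathcal{A}^*)) \to H^p(\Gamma(X, \mathcal{B}^*))$. Since $f$ is an isomorphism of sheaves, $f_p$ is an isomorphism for every $p$. A diagram chase then shows that the composition
\[ H^p(\Gamma(X, \mathcal{A}^*)) \overset{\cong}{\longrightarrow} H^p(X, \mathcal{A}) \overset{f_p}{\longrightarrow} H^p(X, \mathcal{B}) \overset{i^p\circ(\gamma^p)^{-1}}{\longrightarrow} H^p(\Gamma(X, \mathcal{B}^*)) \]
coincides with $[g_p]$, where the first arrow is the inverse of the isomorphism obtained in the first application of Theorem \ref{thm:complexA1}. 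Substituting $[g_p]$ for the injection in the second exact sequence yields the claimed natural exact sequence, and the identification $\Gamma(X, \mathcal{B}^q) = \Gamma(M, \mathcal{C}^q)$ (hence $H^p(\Gamma(X, \mathcal{B}^*)) = H^p(\Gamma(M, \mathcal{C}^*))$) is immediate from the definition of the direct image functor at the level of global sections.

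The main obstacle, I expect, is the careful bookkeeping required to verify the commutativity of the various naturality squares in Theorem \ref{thm:complexA1} so as to confirm that the composite above really equals $[g_p]$; everything else is a straightforward reduction to previously established results. A brief subsidiary point is that the direct image $\pi_*\mathcal{C}^q$ is genuinely fine, but in the intended applications this comes for free, since the sheaves $\mathcal{C}^q$ are modules over the sheaf of smooth functions on $M$ and this structure is inherited by $\mathcal{B}^q$ via pullback along $\pi$.
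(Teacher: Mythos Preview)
Your proposal is correct and follows essentially the same route as the paper: both arguments apply Theorem~\ref{thm:complexA1} to the morphism $(f,g)$, observe that $\mathcal{R}^p=0$ makes the upper $i^p$ an isomorphism and that $f_p$ is an isomorphism, and then read off the exact sequence from the commutative diagram. The only minor difference is that the paper justifies fineness of $\mathcal{B}^q=\pi_*\mathcal{C}^q$ by pushing forward a partition of unity along the continuous map $\pi$, which is more general than your module-structure remark but leads to the same conclusion.
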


\begin{proof}
The proof follows directly from Theorem \ref{thm:complexA1}
which we apply to the morphism of complexes 
$$(f,g): (\mathcal{A},\mathcal{A}^*) \rightarrow (\mathcal{B},\mathcal{B}^*).$$
Note that the direct image sheaves $\mathcal{B}^q=\pi_* \mathcal{C}^q$, $q\geq 0$,
are still fine sheaves for one can push forward a partition of unity under the continuous map $\pi$.

Consider the big commutative diagram in Theorem \ref{thm:complexA1}.
Since $0\rightarrow \mathcal{A}\rightarrow \mathcal{A}^*$ is a fine resolution,
the quotient sheaves $\mathcal{R}^p$, $p\geq 1$, do vanish such that the map
$i^p$ in the upper line is an isomorphism.
By assumption, the induced map on cohomology $f_p$ is an isomorphism,
and so $g_p'$ must also be isomorphic (for the maps $\gamma^p$ are isomorphisms, as well).
But then
\begin{eqnarray*}
0 \rightarrow H^p(\Gamma(X,\mathcal{A}^*)) \overset{[g_p]}{\longrightarrow} H^p(\Gamma(X,\mathcal{B}^*)) 
\longrightarrow \Gamma(X,\mathcal{S}^p) \rightarrow 0
\end{eqnarray*}
is an exact sequence.
\end{proof}


\bigskip
\section{Review of the $\dq_s$-complex}\label{sec:review}

\subsection{Two $\dq$-complexes on singular spaces}

Let us recall some of the essential constructions from \cite{eins}.
Let $X$ be always a (singular) Hermitian complex space of pure dimension $n$ and $U\subset X$ an open subset.
On a singular space, it is most fruitful
to consider forms that are square-integrable up to the singular set.
Hence, we will use the following concept of locally square-integrable forms:
\begin{eqnarray*}
L_{loc}^{p,q}(U):=\{f \in L_{loc}^{p,q}(U-\Sing X): f|_K \in L^{p,q}(K-\Sing X)\ \forall K\subset\subset U\}.
\end{eqnarray*}
It is easy to check that the presheaves given as
$$\mathcal{L}^{p,q}(U) := L_{loc}^{p,q}(U)$$
are already sheaves $\mathcal{L}^{p,q}\rightarrow X$. On $L_{loc}^{p,q}(U)$, we denote by
$$\dq_w(U): L_{loc}^{p,q}(U) \rightarrow L_{loc}^{p,q+1}(U)$$
the $\dq$-operator in the sense of distributions on $U-\Sing X$ which is closed and densely defined.
When there is no danger of confusion, we will simply write $\dq_w$ for $\dq_w(U)$.
The subscript refers to $\dq_w$ as an operator in a weak sense.
Since $\dq_w$ is a local operator, i.e.
$$\dq_w(U)|_V = \dq_w(V)$$
for open sets $V\subset U$,
we can define the presheaves of germs of forms in the domain of $\dq_w$,
$$\mathcal{C}^{p,q}:=\mathcal{L}^{p,q}\cap \dq_w^{-1}\mathcal{L}^{p,q+1},$$
given by
$$\mathcal{C}^{p,q}(U) = \mathcal{L}^{p,q}(U) \cap\Dom\dq_w(U).$$
These are actually already sheaves
because the following is also clear: If $U=\bigcup U_\mu$ is a union of open sets, $f_\mu=f|_{U_\mu}$ and
$$f_\mu \in \Dom \dq_w(U_\mu),$$
then
$$f\in \Dom \dq_w(U)\ \ \  \mbox{ and }\ \ \  \big(\dq_w(U) f\big)|_{U_\mu} = \dq_w(U_\mu) f_\mu.$$
Moreover, it is easy to see that the sheaves $\mathcal{C}^{p,q}$ admit partitions of unity,
and so we obtain fine sequences
\begin{eqnarray}\label{eq:Cseq1}
\mathcal{C}^{p,0} \overset{\dq_w}{\longrightarrow} \mathcal{C}^{p,1} \overset{\dq_w}{\longrightarrow} \mathcal{C}^{p,2} \overset{\dq_w}{\longrightarrow} ...
\end{eqnarray}
We will see later, when we deal with resolution of singularities, that
\begin{eqnarray*}
\mathcal{K}_X := \ker \dq_w \subset \mathcal{C}^{n,0}
\end{eqnarray*}
is just the canonical sheaf of Grauert and Riemenschneider since the $L^2$-property of $(n,0)$-forms
remains invariant under modifications of the metric.

The $L^{2,loc}$-Dolbeault cohomology with respect to the $\dq_w$-operator on an open set $U\subset X$
is the cohomology of the complex \eqref{eq:Cseq1} which is denoted by $H^q(\Gamma(U,\mathcal{C}^{p,*}))$.

\bigskip
Secondly, we introduce now a suitable local realization of a minimal version of the $\dq$-operator.
This is the $\dq$-operator with a Dirichlet boundary condition at the singular set $\Sing X$ of $X$.
Let
$$\dq_s(U): L_{loc}^{p,q}(U) \rightarrow L_{loc}^{p,q+1}(U)$$
be defined as follows.\footnote{Again, we write simply $\dq_s$ for $\dq_s(U)$ 
if there is no danger of confusion.} We say that $f\in\Dom\dq_w$ is in the domain of $\dq_s$ if there exists a
sequence of forms $\{f_j\}_j \subset \Dom\dq_w \subset L_{loc}^{p,q}(U)$ with essential support away from the singular set,
$$\supp f_j \cap \Sing X = \emptyset,$$
such that
\begin{eqnarray}\label{eq:ds1}
f_j \rightarrow f &\mbox{ in }& L^{p,q}(K-\Sing X),\\
\dq_w f_j \rightarrow \dq_w f &\mbox{ in }& L^{p,q+1}(K-\Sing X)\label{eq:ds2}
\end{eqnarray}
for each compact subset $K\subset\subset U$. The subscript refers to $\dq_s$ as an extension in a strong sense.
Note that we can assume without loss of generality
(by use of cut-off functions and smoothing with Dirac sequences) that the forms $f_j$ are smooth with compact support in $U-\Sing X$.

It is now clear that
$$\dq_s(U)|_V = \dq_s(V)$$
for open sets $V\subset U$, and we can define the presheaves of germs of forms in the domain of $\dq_s$,
$$\mathcal{F}^{p,q}:=\mathcal{L}^{p,q}\cap \dq_s^{-1}\mathcal{L}^{p,q+1},$$
given by 
$$\mathcal{F}^{p,q}(U) = \mathcal{L}^{p,q}(U) \cap \Dom \dq_s(U).$$
Here, we shall check a bit more carefully that these are already sheaves:
Let $U=\bigcup U_\mu$ 
be a union of open sets, $f\in L_{loc}^{p,q}(U)$ and $f_\mu=f|_{U_\mu} \in \Dom\dq_s(U_\mu)$
for all $\mu$. We claim that $f\in\Dom\dq_s(U)$.
To see this, we can assume (by taking a refinement if necessary)
that the open cover $\mathcal{U}:=\{U_\mu\}$ is locally finite,
and choose a partition of unity $\{\varphi_\mu\}$ for $\mathcal{U}$.
On $U_\mu$ choose a sequence $\{f^\mu_j\}\subset L_{loc}^{p,q}(U_\mu)$ as in \eqref{eq:ds1}, \eqref{eq:ds2},
and consider
$$f_j := \sum_{\mu} \varphi_\mu f_j^\mu.$$
It is clear that $\{f_j\}\subset L_{loc}^{p,q}(U)$. If $K\subset\subset U$ is compact, then $K\cap \supp \varphi_\mu$
is a compact subset of $U_\mu$ for each $\mu$, so that $\{f_j^\mu\}$ and $\{\dq f_j^\mu\}$ converge in the $L^2$-sense
to $f_\mu$ resp. $\dq_w f_\mu$ on $K\cap\supp \varphi_\mu$. But then $\{f_j\}$ and $\{\dq f_j\}$ converge in the $L^2$-sense
to $f$ resp. $\dq_w f$ on $K$ (recall that the cover is locally finite) and that is what we had to show.

As for $\mathcal{C}^{p,q}$, it is clear that the sheaves $\mathcal{F}^{p,q}$ are fine,
and we obtain fine sequences
\begin{eqnarray}\label{eq:Fseq1}
\mathcal{F}^{p,0} \overset{\dq_s}{\longrightarrow} \mathcal{F}^{p,1} \overset{\dq_s}{\longrightarrow} \mathcal{F}^{p,2} \overset{\dq_s}{\longrightarrow} ...
\end{eqnarray}
We can now introduce the sheaf
\begin{eqnarray}\label{eq:KXs}
\mathcal{K}^s_X := \ker \dq_s \subset \mathcal{F}^{n,0}
\end{eqnarray}
which we may call the canonical sheaf of holomorphic $n$-forms with Dirichlet boundary condition.
One of the main objectives of the present paper is to compare different representations of the cohomology of $\mathcal{K}^s_X$.
One of them will be the $L^{2,loc}$-Dolbeault cohomology with respect to the $\dq_s$-operator on open sets $U\subset X$,
i.e. the cohomology of the complex \eqref{eq:Fseq1} which is denoted by $H^q(\Gamma(U,\mathcal{F}^{p,*}))$.

\medskip
\subsection{Local $L^2$-solvability for $(n,q)$-forms}
It is clearly interesting to study wether the sequences \eqref{eq:Cseq1} and \eqref{eq:Fseq1} are exact,
which is well-known to be the case in regular points of $X$ where the $\dq_w$- and the $\dq_s$-operator coincide.
In singular points, the situation is quite complicated for forms of arbitrary degree and not completely understood.
However, the $\dq_w$-equation is locally solvable in the $L^2$-sense at arbitrary singularities for forms
of degree $(n,q)$, $q>0$ (see \cite{PS1}, Proposition 2.1), and for forms of degree $(p,q)$, $p+q>n$, 
at isolated singularities (see \cite{FOV2}, Theorem 1.2).
Since we are concerned with canonical sheaves, we may restrict our attention to the case of $(n,q)$-forms
and conclude:

\begin{thm}\label{thm:exactness1}
Let $X$ be a Hermitian complex space of pure dimension $n$. Then
\begin{eqnarray}\label{eq:exactness1}
0\rightarrow \mathcal{K}_X \hookrightarrow \mathcal{C}^{n,0} \overset{\dq_w}{\longrightarrow}
\mathcal{C}^{n,1} \overset{\dq_w}{\longrightarrow} \mathcal{C}^{n,2} \overset{\dq_w}{\longrightarrow} ... \longrightarrow \mathcal{C}^{n,n}
\rightarrow 0
\end{eqnarray}
is a fine resolution.
For an open set $U\subset X$, it follows that
$$H^q(U,\mathcal{K}_X) \cong H^q(\Gamma(U,\mathcal{C}^{n,*}))\ , \ H^q_{cpt}(U,\mathcal{K}_X) \cong H^q(\Gamma_{cpt}(U,\mathcal{C}^{n,*})).$$
\end{thm}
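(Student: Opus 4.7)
The plan is to establish exactness of \eqref{eq:exactness1} stalkwise and then invoke the standard cohomology formalism for fine resolutions. The sheaves $\mathcal{C}^{n,q}$ have already been shown to be fine (they admit partitions of unity because multiplication by a smooth compactly supported function preserves the $\dq_w$-domain), so only exactness needs proof.

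Exactness at $\mathcal{K}_X$ and at $\mathcal{C}^{n,0}$ is immediate from the definition $\mathcal{K}_X = \ker\dq_w \subset \mathcal{C}^{n,0}$. For exactness at $\mathcal{C}^{n,q}$ with $1 \leq q \leq n$, I would check the stalk at each point $x \in X$, distinguishing two cases. At a regular point $x \in \Reg X$, choose a coordinate ball $U \subset \Reg X$ around $x$ on which $\dq_w$ coincides with the ordinary $\dq$-operator in the sense of distributions on a Hermitian manifold; then the standard $L^2$-Dolbeault Poincar\'e lemma (Hörmander) produces, for any germ $f$ with $\dq_w f = 0$, a smaller neighborhood $V \subset\subset U$ and $g \in L^2(V,\Lambda^{n,q-1})$ with $\dq g = f$. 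Since $f \in L^2_{loc}$ automatically gives $g \in L^2_{loc}$ with $\dq_w g = f \in \mathcal{L}^{n,q}(V)$, we get $g \in \mathcal{C}^{n,q-1}(V)$, as required. At a singular point $x \in \Sing X$, the same conclusion is the content of Proposition 2.1 of \cite{PS1} (local $L^2$-solvability of $\dq_w$ for $(n,q)$-forms, $q \geq 1$, at arbitrary singularities), applied to a sufficiently small neighborhood $V$ of $x$.

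The main (only) obstacle is having the local $L^2$-solvability at singularities available; this is where the hypothesis on the degree $p = n$ is crucial and why the argument does not go through for general $(p,q)$-forms. The regular-point case is standard, and matching the $L^2_{loc}$ framework used here with the $L^2$-solvability statements is a routine check via cut-offs. Note also that the case $q = n$ is covered: Pardon--Stern's result allows $q = n$, so $\dq_w \colon \mathcal{C}^{n,n-1} \to \mathcal{C}^{n,n}$ is stalkwise surjective, giving exactness at the final term.

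Once \eqref{eq:exactness1} is seen to be a fine resolution, the cohomology identifications follow from the general machinery: on a paracompact Hausdorff space (and complex spaces are paracompact), fine sheaves are acyclic for both $\Gamma$ and $\Gamma_{cpt}$, so the cohomology of $\mathcal{K}_X$ is computed by the complex of global sections of any fine resolution. This yields
\[
H^q(U,\mathcal{K}_X) \cong H^q(\Gamma(U,\mathcal{C}^{n,*})), \qquad H^q_{cpt}(U,\mathcal{K}_X) \cong H^q(\Gamma_{cpt}(U,\mathcal{C}^{n,*})),
\]
for every open $U \subset X$, completing the proof.
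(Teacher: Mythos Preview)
Your proposal is correct and follows essentially the same approach as the paper: the paper does not give a detailed proof but simply records the theorem as an immediate consequence of the local $L^2$-solvability result for $(n,q)$-forms at arbitrary singularities (Proposition 2.1 of \cite{PS1}), together with the already-established fineness of the sheaves $\mathcal{C}^{n,q}$. Your write-up spells out the regular-point case and the passage to the cohomology identifications more explicitly, but the substance is the same.
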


Concerning the $\dq_s$-equation, local $L^2$-solvability for forms of degree $(n,q)$
is known to hold on spaces with isolated singularities (see \cite{eins}, Theorem 1.9),
but the problem is open at arbitrary singularities.

So, let $X$ have only isolated singularities. 
Then the $\dq_s$-equation is locally exact on $(n,q)$-forms
for $1\leq q \leq n$ by \cite{eins}, Lemma 5.5.
The statement was deduced from the results of Forn{\ae}ss, {\O}vrelid and Vassiliadou \cite{FOV2}.
Hence we note:

\begin{thm}\label{thm:exactness2}
Let $X$ be a Hermitian complex space of pure dimension $n\geq 2$ with only isolated singularities. Then
\begin{eqnarray}\label{eq:exactness2}
0\rightarrow \mathcal{K}_X^s \hookrightarrow \mathcal{F}^{n,0} \overset{\dq_s}{\longrightarrow}
\mathcal{F}^{n,1} \overset{\dq_s}{\longrightarrow} \mathcal{F}^{n,2} \overset{\dq_s}{\longrightarrow} ... \longrightarrow \mathcal{F}^{n,n}
\rightarrow 0
\end{eqnarray}
is a fine resolution.
For an open set $U\subset X$, it follows that
$$H^q(U,\mathcal{K}_X^s) \cong H^q(\Gamma(U,\mathcal{F}^{n,*}))\ ,\ H^q_{cpt}(U,\mathcal{K}_X^s) \cong H^q(\Gamma_{cpt}(U,\mathcal{F}^{n,*})).$$
\end{thm}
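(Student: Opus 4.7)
The strategy is to verify that \eqref{eq:exactness2} is a fine resolution of $\mathcal{K}_X^s$ in the standard sheaf-theoretic sense, whence the asserted cohomology isomorphisms follow from the abstract de Rham theorem (fine sheaves on paracompact Hausdorff spaces are acyclic for both $\Gamma$ and $\Gamma_{cpt}$). The fineness of each $\mathcal{F}^{n,q}$ has already been established in the preceding subsection via partitions of unity, and exactness at $\mathcal{K}_X^s$ and at $\mathcal{F}^{n,0}$ is immediate from the defining identity $\mathcal{K}_X^s = \ker\dq_s$. The entire content therefore reduces to checking local exactness at $\mathcal{F}^{n,q}$ for $1\leq q\leq n$, i.e.\ local solvability of $\dq_s u = f$ in $\mathcal{F}^{n,q-1}$ for every $\dq_s$-closed germ $f\in\mathcal{F}^{n,q}_x$.

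For a regular point $x\in X\setminus\Sing X$, one works on a small neighborhood $U\subset X\setminus\Sing X$: the Dirichlet boundary condition at $\Sing X$ is vacuous on $U$, so $\dq_s$ coincides with $\dq_w$ and $\mathcal{F}^{n,q}|_U = \mathcal{C}^{n,q}|_U$. Local exactness thus reduces to the corresponding statement for the $\dq_w$-complex, which is part of Theorem \ref{thm:exactness1} and ultimately rests on the classical $L^2$-Dolbeault lemma on complex manifolds.

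The essential case is an isolated singular point $x\in\Sing X$. Here I would invoke directly the two local $L^2$-solvability results from \cite{eins}: Lemma 5.4 there provides a solution $u\in\Dom\dq_s$ for all degrees $1\leq q \leq n-1$, and Lemma 6.3 takes care of $q \geq 2$. Since $n\geq 2$, the two ranges overlap and cover every degree $1\leq q\leq n$, including the top degree $q=n$. The point is not merely to solve $\dq_w u = f$ locally (which would already follow from Forn{\ae}ss--{\O}vrelid--Vassiliadou \cite{FOV2}); rather, the solution must satisfy the strong approximation property \eqref{eq:ds1}--\eqref{eq:ds2} that certifies $u\in\Dom\dq_s$, and upgrading the weak solution of \cite{FOV2} to a strong one is precisely the content of Lemmas 5.4 and 6.3 of \cite{eins}. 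I expect this passage from weak to strong solvability near the singularity to be the main obstacle of the argument, though in the present paper the work has already been done in \cite{eins}, so here it reduces to citing it. Once local exactness is established, the two cohomology isomorphisms follow at once from the standard sheaf-theoretic machinery.
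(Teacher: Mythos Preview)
Your proposal is correct and matches the paper's own argument essentially verbatim: the paper establishes fineness of the $\mathcal{F}^{n,q}$ via partitions of unity in the preceding discussion, then reduces to local exactness at singular points and cites exactly \cite{eins}, Lemma~5.4 (covering $1\leq q\leq n-1$) and Lemma~6.3 (covering $q\geq 2$), noting that together these handle all $1\leq q\leq n$ since $n\geq 2$. Your additional remark that the passage from the weak solvability of \cite{FOV2} to strong $\dq_s$-solvability is the actual content of those lemmas is accurate and adds useful context.
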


\bigskip
\section{Resolution of $(X,\mathcal{K}_X^s)$}\label{sec:KXs}


\subsection{Desingularization and comparison of metrics}\label{subsec:metrics}

Let $\pi: M \rightarrow X$
be a resolution of singularities (which exists due to Hironaka \cite{Hi}), i.e. a proper holomorphic surjection such that
\begin{eqnarray*}
\pi|_{M-E}: M-E \rightarrow X-\Sing X
\end{eqnarray*}
is biholomorphic, where $E=|\pi^{-1}(\Sing X)|$ is the exceptional set.
We may assume that $E$ is a divisor with only normal crossings,
i.e. the irreducible components of $E$ are regular and meet complex transversely.
Let $Z:=\pi^{-1}(\Sing X)$ be the unreduced exceptional divisor.
For the topic of desingularization, we refer to
\cite{AHL}, \cite{BiMi} and \cite{Ha}.
Let
$
\gamma:= \pi^* h
$
be the pullback of the Hermitian metric $h$ of $X$ to $M$.
$\gamma$ is positive semidefinite (a pseudo-metric) with degeneracy locus $E$.

We give $M$ the structure of a Hermitian manifold with a freely chosen (positive definite)
metric $\sigma$. Then $\gamma \lesssim \sigma$
and $\gamma \sim \sigma$ on compact subsets of $M-E$.
For an open set $U\subset M$, we denote by $L^{p,q}_{\gamma}(U)$ and $L^{p,q}_{\sigma}(U)$
the spaces of square-integrable $(p,q)$-forms with respect to the (pseudo-)metrics $\gamma$ and $\sigma$,
respectively. 

Since $\sigma$ is positive definite and $\gamma$ is positive semi-definite,
there exists a continuous function $g\in C^0(M,\R)$ such that
\begin{eqnarray}\label{eq:l2dV}
dV_\gamma = g^2 dV_\sigma.
\end{eqnarray}
This yields $|g| |\omega|_\gamma  = |\omega|_\sigma$
if $\omega$ is an $(n,0)$-form, and
$|\omega|_\sigma \lesssim_U |g||\omega|_\gamma$
on $U\subset\subset M$ if $\omega$ is a $(n,q)$-form, $0\leq q\leq n$.\footnote{
This statement means that $|\omega|_\sigma/|\omega|_\gamma$ is locally bounded on $M$ for $(n,q)$-forms.}
So, for an $(n,q)$ form $\omega$ on $U\subset\subset M$:
\begin{eqnarray}\label{eq:l2est2}
\int_U |\omega|_\sigma^2 dV_\sigma \lesssim_U \int_U g^{2} |\omega|_\gamma^2 g^{-2} dV_\gamma = \int_U |\omega|^2_\gamma dV_\gamma.
\end{eqnarray}
Conversely,
$|g| |\eta|_\gamma \lesssim_U |\eta|_\sigma$
on $U\subset\subset M$ if $\eta$ is a $(0,q)$-form, $0\leq q\leq n$.\footnote{
For $(0,q)$-forms, $|\omega|_\gamma/|\omega|_\sigma$ is locally bounded.}
So, for a $(0,q)$ form $\eta$ on $U\subset\subset M$:
\begin{eqnarray}\label{eq:l2est}
\int_U |\eta|_\gamma^2 dV_\gamma \lesssim_U \int_U g^{-2} |\eta|_\sigma^2 g^2 dV_\sigma = \int_U |\eta|^2_\sigma dV_\sigma.
\end{eqnarray}
For open sets $U\subset\subset M$ and all $0\leq q\leq n$, we conclude the relations
\begin{eqnarray}\label{eq:l2est3}
L^{n,q}_{\gamma}(U) &\subset& L^{n,q}_{\sigma}(U),\\
\label{eq:l2est4}
L^{0,q}_{\sigma}(U) &\subset& L^{0,q}_{\gamma}(U).
\end{eqnarray}
For an open set $\Omega \subset X$, $\Omega^*=\Omega - \Sing X$, $\wt{\Omega}:=\pi^{-1}(\Omega)$,
pullback of forms under $\pi$ gives the isometry
\begin{eqnarray}\label{eq:l2est5}
\pi^*: L^{p,q}(\Omega^*) \longrightarrow L^{p,q}_{\gamma}(\wt{\Omega}-E) \cong L^{p,q}_{\gamma}(\wt{\Omega}),
\end{eqnarray}
where the last identification is by trivial extension of forms over the thin exceptional set $E$.

\subsection{Representation of $\mathcal{K}_X^s$ under desingularization}

By use of \eqref{eq:l2est5},
both complexes, the resolution $(\mathcal{C}^{n,*},\dq_w)$ of $\mathcal{K}_X$ 
and the resolution $(\mathcal{F}^{n,*},\dq_s)$ of $\mathcal{K}_X^s$, can be studied as well
on the complex manifold $M$. This is the point of view that was taken in \cite{eins}
where we considered the sheaves $\mathcal{L}^{p,q}_\gamma$ on $M$ given by
$$\mathcal{L}^{p,q}_\gamma(U) := L^{p,q}_{\gamma,loc}(U)$$
and
\begin{eqnarray}\label{eq:C1}
\mathcal{C}^{p,q}_{\gamma,E} := \mathcal{L}^{p,q}_{\gamma} \cap \dq_{w,E}^{-1} \mathcal{L}^{p,q+1}_\gamma,
\end{eqnarray}
where $\dq_{w,E}$ is the $\dq$-operator in the sense of distributions with respect to compact subsets of $M - E$.
\eqref{eq:C1} is given by the presheaf $\mathcal{C}^{p,q}_{\gamma,E}(U) = \mathcal{L}^{p,q}_\gamma (U) \cap \Dom \dq_{w,E}(U)$.
It follows from \eqref{eq:l2est5} that $(\mathcal{C}^{p,*},\dq_w)$ can be canonically
identified with the direct image complex $(\pi_* \mathcal{C}^{p,*}_{\gamma,E},\pi_* \dq_{w,E})$.
Since $\mathcal{L}^{n,0}_\gamma = \mathcal{L}^{n,0}_\sigma$ for the regular metric $\sigma$ on $M$
by use of \eqref{eq:l2est3} and \eqref{eq:l2est4},
we can use the fact that the $\dq$-equation in the sense of distributions for $L^2_\sigma$-forms
extends over exceptional sets (see e.g. \cite{Rp1}, Lemma 2.1) to conclude that
$$\mathcal{K}_M := \ker \dq_{w,E} \subset \mathcal{L}^{n,0}_\gamma = \mathcal{L}^{n,0}_\sigma$$
is just the usual canonical sheaf on the complex manifold $M$, and that
$\mathcal{K}_X \cong \pi_* \mathcal{K}_M$
is in fact the canonical sheaf of Grauert-Riemenschneider.

Analogously, we consider now the $\dq_s$-complex.
Let $\dq_{s,E}$ be the $\dq$-operator acting on $\mathcal{L}^{p,q}_\gamma$-forms,
defined as the $\dq_s$-operator on $X$ above, but with the exceptional set $E$ in place of the singular set $\Sing X$.
Let
$$\mathcal{F}^{p,q}_{\gamma,E}:=\mathcal{L}^{p,q}_\gamma \cap \dq_{s,E}^{-1} \mathcal{L}^{p,q+1}_\gamma.$$
Then it follows from \eqref{eq:l2est5} that $(\mathcal{F}^{p,*},\dq_s)$ can be canonically
identified with the direct image complex $(\pi_* \mathcal{F}^{p,*}_{\gamma,E},\pi_* \dq_{s,E})$.

It remains to study $\ker \dq_{s,E}$ in $\mathcal{L}^{n,0}_\gamma = \mathcal{L}^{n,0}_\sigma$
because $\pi_*(\mathcal{F}^{n,0}_{\gamma,E} \cap  \ker\dq_{s,E}) = \mathcal{K}^s_X$.
This was a central point \cite{eins} (see \cite{eins}, Section 6.3):
if the resolution of singularities $\pi: M\rightarrow X$ is chosen appropriately,
then we can deduce suitable statements about $\ker\dq_{s,E}$ and obtain 
Theorem \ref{thm:Ks} from the Introduction.

To repeat it shortly, there exists a resolution of singularities $\pi: M \rightarrow X$ with only normal crossings
and an effective divisor $D\geq Z -|Z|$ with support on the exceptional set,
where $Z=\pi^{-1}(\Sing X)$ is the unreduced exceptional divisor,
such that
\begin{eqnarray}\label{eq:DD}
\mathcal{K}_X^s = \pi_* \big( \mathcal{K}_M \otimes \OO(-D) \big).
\end{eqnarray}
In many situations, e.g., if $\dim X=2$ or if $X$ has only homogeneous singularities,
then one can take $D=Z-|Z|$. In the present paper, we will assume from now on that this is actually the case,
and show how the $L^2$-theory from \cite{eins} can be improved under this assumption.

\medskip
So, for the rest of the paper, we assume that \eqref{eq:DD} holds with $D=Z-|Z|$.


\bigskip
\section{Proof of Theorem \ref{thm:main1}}\label{sec:main1}

We can use Theorem \ref{thm:complexA2} to represent the cohomology groups 
\begin{eqnarray*}
H^q(X,\mathcal{K}_X^s) &\cong& H^q(\Gamma(X,\mathcal{F}^{n,*})),\\
H^q_{cpt}(X,\mathcal{K}_X^s) &\cong& H^q(\Gamma_{cpt}(X,\mathcal{F}^{n,*}))
\end{eqnarray*}
in terms of cohomology groups on the resolution $\pi: M\rightarrow X$,
where we let $\pi: M\rightarrow X$ be a resolution of singularities as in Theorem \ref{thm:Ks} so that $D=Z-|Z|$.
Recall also that $X$ is a Hermitian complex space of pure dimension $n$
with only isolated singularities.

Clearly, we intend to use Theorem \ref{thm:complexA2} with $\mathcal{A}=\mathcal{K}_X^s$
and
$$(\mathcal{A}^*,a_*) = (\mathcal{F}^{n,*},\dq_s),$$
so that $0\rightarrow \mathcal{A} \rightarrow \mathcal{A}^*$ is a fine resolution of $\mathcal{A}$ over $X$
by Theorem \ref{thm:exactness2}.
By assumption,
$$\mathcal{A} = \mathcal{K}_X^s \cong \pi_* \big( \mathcal{K}_M \otimes \OO(|Z|-Z) \big),$$
so that we can choose
$$\mathcal{C} = \mathcal{K}_M \otimes \OO(|Z|-Z)$$
for the application of Theorem \ref{thm:complexA2}.
It remains to choose a suitable fine resolution $0\rightarrow\mathcal{C} \rightarrow \mathcal{C}^*$.
Since $(M,\sigma)$ is an ordinary Hermitian manifold,
we can use the usual $L^2_\sigma$-complex of forms with values in $\OO(|Z|-Z)$.
To realize that, we can adopt two different points of view.
First, let $L_{|Z|-Z}\rightarrow M$ be the holomorphic line bundle associated to the divisor $|Z|-Z$
such that holomorphic sections of $L_{|Z|-Z}$ correspond to sections of $\OO(|Z|-Z)$,
and give $L_{|Z|-Z}$ the structure of a Hermitian line bundle by choosing an arbitrary
positive definite Hermitian metric. Then, denote by
$$L^{p,q}_\sigma(U,L_{|Z|-Z})\ ,\ L^{p,q}_{\sigma,loc}(U,L_{|Z|-Z})$$
the spaces of (locally) square-integrable $(p,q)$-forms with values in $L_{|Z|-Z}$ (with respect to the
metric $\sigma$ on $M$ and the chosen metric on $L_{|Z|-Z}$).
We can then define the sheaves of germs of square-integrable $(p,q)$-forms
with values in $L_{|Z|-Z}$, $\mathcal{L}^{p,q}_\sigma(L_{|Z|-Z})$, by the assignment
$$\mathcal{L}^{p,q}_\sigma(L_{|Z|-Z}) (U) = L^{p,q}_{\sigma,loc}(U,L_{|Z|-Z}).$$
The second point of view is to use the sheaves $\mathcal{L}^{p,q}_\sigma \otimes \OO(|Z|-Z)$
which are canonically isomorphic to the sheaves $\mathcal{L}^{p,q}_\sigma(L_{|Z|-Z})$.
Let us keep both points of view in mind.
As in \eqref{eq:C1}, let
\begin{eqnarray}\label{eq:C2}
\mathcal{C}^{p,q}_{\sigma,E}(L_{|Z|-Z}) := \mathcal{L}^{p,q}_{\sigma}(L_{|Z|-Z}) \cap \dq_{w,E}^{-1} \mathcal{L}^{p,q+1}_\sigma(L_{|Z|-Z}),
\end{eqnarray}
where $\dq_{w,E}$ is the $\dq$-operator in the sense of distributions with respect to compact subsets of $M - E$
(and for forms with values in $L_{|Z|-Z}$). Since $\sigma$ is positive definite, the $\dq$-equation
in the sense of distributions for $L^2_\sigma$-forms with values in a holomorphic line bundle 
extends over the exceptional set, and we can drop the $E$ in the notation and use as well the $\dq_w$-operator on $M$.

It is clear that the sheaves $\mathcal{C}^{p,q}_\sigma(L_{|Z|-Z})$ are fine.
Now then, the ordinary Lemma of Dolbeault tells us that
\begin{eqnarray*}
0\rightarrow \mathcal{K}_M \otimes\OO(|Z|-Z) \hookrightarrow
\mathcal{C}^{n,0}_\sigma(L_{|Z|-Z}) \overset{\dq_w}{\longrightarrow}
\mathcal{C}^{n,1}_\sigma(L_{|Z|-Z}) \overset{\dq_w}{\longrightarrow} ...
\end{eqnarray*}
is a fine resolution, and we choose
$$(\mathcal{C}^*,c_*) = (\mathcal{C}^{n,*}_\sigma(L_{|Z|-Z}), \dq_w)$$
for the application of Theorem \ref{thm:complexA2}. Note that
\begin{eqnarray*}
H^q(M,\mathcal{K}_M\otimes \OO(|Z|-Z)) &\cong& H^q(\Gamma(M,\mathcal{C}^{n,*}_\sigma(L_{|Z|-Z}))),\\
H^q_{cpt}(M,\mathcal{K}_M\otimes\OO(|Z|-Z)) &\cong& H^q(\Gamma_{cpt}(M,\mathcal{C}^{n,*}_\sigma(L_{|Z|-Z}))).
\end{eqnarray*}

We have to consider the direct image complex
$$\mathcal{B}^* = \pi_* \mathcal{C}^* = \pi_* \mathcal{C}^{n,*}_\sigma(L_{|Z|-Z}),$$
and recall that the non-exactness of the complex 
$0\rightarrow \pi_*\mathcal{C} \rightarrow \pi_* \mathcal{C}^*$ is measured
by the higher direct image sheaves
$$\mathcal{R}^p := R^p \pi_* \big(\mathcal{K}_M \otimes\OO(|Z|-Z)\big)\ ,\ p\geq 1.$$
We have to check that these satisfy the assumptions in Theorem \ref{thm:complexA2}
(where the $\mathcal{R}^p$ will appear in place of the $\mathcal{S}^p$).
For a point $x\in X$, we have
$$(\mathcal{R}^p)_x = \lim_{\substack{\longrightarrow\\ U}} H^p\big(\pi^{-1}(U),\mathcal{K}_M\otimes\OO(|Z|-Z)\big),$$
where the limit runs over open neighborhoods of $x$ in $X$.
Since $\pi$ is a biholomorphism outside the exceptional set, it follows that
$\mathcal{R}^p$ is a skyscraper sheaf with $(\mathcal{R}^p)_x=0$ for $x\notin\Sing X$.
Hence, the sheaves $\mathcal{R}^p$ are acyclic.

It remains to check that the canonical maps $\Gamma(X,\ker b_p) \rightarrow \Gamma(X,\mathcal{R}^p)$
are surjective for $p\geq 1$. So, let $[\omega]\in\Gamma(X,\mathcal{R}^p)$. Since $\mathcal{R}^p$ is a
skyscraper sheaf as described above, $[\omega]$ is represented by a set of germs $\{\omega_x\}_{x\in\Sing X}$,
where each $\omega_x$ is given by a $\dq$-closed $(n,p)$-form with values in $L_{|Z|-Z}$ in a neighborhood
$U_x$ of the component $\pi^{-1}(\{x\})$ of the exceptional set,
$$\omega_x \in \ker\dq_w \subset \mathcal{C}^{n,p}_\sigma(L_{|Z|-Z})(U_x).$$
We will see in a moment that we can assume that the forms $\omega_x$ have compact support. So, they give rise
to a global form $\omega \in \ker\dq_w \subset \mathcal{C}^{n,p}_\sigma(L_{|Z|-Z})(M)$, i.e.
$$\omega\in  \Gamma(M,\ker\dq_w\cap \mathcal{C}^{n,p}_\sigma(L_{|Z|-Z})) = \Gamma(X,\ker b_p)$$ 
represents $[\omega]\in \Gamma(X,\mathcal{R}^p)$.

To show that we can choose $\omega_x$ with compact support in $U_x$, we can use the fact that $Z-|Z|$ is effective
so that $\omega_x$ can be interpreted as a $\dq$-closed form in $\mathcal{C}^{n,p}_\sigma(U_x)$.
But Takegoshi's vanishing theorem (see \cite{Ta}, Theorem 2.1) tells us that there is a solution $\eta_x\in \mathcal{C}^{n,p-1}_\sigma(V_x)$
to the equation $\dq_w \eta_x =\omega_x$
on a smaller neighborhood of the component $\pi^{-1}(\{x\})$ of the exceptional set.
Since $(\mathcal{C}^{n,*}_\sigma,\dq_w)$ is a fine resolution of the canonical sheaf $\mathcal{K}_M$,
this fact is also expressed by the vanishing of the higher direct image sheaves
$$R^p\pi_* \mathcal{K}_M = 0\ ,\ p\geq 1.$$
Let $\chi_x$ be a smooth cut-off function with compact support in $V_x$ that is identically $1$ in a
neighborhood of $\pi^{-1}(\{x\})$.
Then $\dq_w(\chi_x \eta_x)$ is the form we were looking for because it has compact support in $U_x$ and equals
$\omega_x$ in a neighborhood of $\pi^{-1}(\{x\})$ so that it can be considered as a form with values in $L_{|Z|-Z}$ again.

Hence, we can use Theorem \ref{thm:complexA2} with
\begin{eqnarray*}
& (\mathcal{A},\mathcal{A}^*)& = \big(\mathcal{K}_X^s, \mathcal{F}^{n,*}\big),\\
& (\mathcal{B},\mathcal{B}^*)& = \big(\pi_*(\mathcal{K}_M\otimes\OO(|Z|-Z)), \pi_* \mathcal{C}^{n,*}_\sigma(L_{|Z|-Z})\big),\\
& (\mathcal{C},\mathcal{C}^*)& = \big(\mathcal{K}_M\otimes\OO(|Z|-Z), \mathcal{C}^{n,*}_\sigma(L_{|Z|-Z})\big)
\end{eqnarray*}
after choosing a suitable morphism of complexes $(f,g): (\mathcal{A},\mathcal{A}^*) \rightarrow (\mathcal{B},\mathcal{B}^*)$ which gives
the commutative diagram
$$
\begin{xy}
  \xymatrix{
      0 \ar[r] & \mathcal{A} \ar[r] \ar[d]_{\cong}^{f}    &   \mathcal{A}^* \ar[d]^g  \\
      0 \ar[r] & \mathcal{B} \ar[r]             &   \mathcal{B}^*
  }
\end{xy}$$
It turns out that we can simply use the natural inclusion because the sheaves $\mathcal{A}^p = \mathcal{F}^{n,p}$
are actually subsheaves of $\mathcal{B}^p = \pi_* \mathcal{C}^{n,p}_\sigma(L_{|Z|-Z})$ for all $p\geq0$:

\begin{lem}\label{lem:inclusion}
For all $p\geq 0$, we have
$$\mathcal{F}^{n,p}_{\gamma,E} \subset \mathcal{C}^{n,p}_\sigma(L_{|Z|-Z})$$
as subsheaves of $\mathcal{L}^{n,p}_\sigma$. It follows that
$$\mathcal{F}^{n,p} \cong \pi_* \mathcal{F}^{n,p}_{\gamma,E} \subset \pi_* \mathcal{C}^{n,p}_\sigma(L_{|Z|-Z}).$$
\end{lem}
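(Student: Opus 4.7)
My plan is to establish the local inclusion $\mathcal{F}^{n,p}_{\gamma,E}\subset\mathcal{C}^{n,p}_\sigma(L_{|Z|-Z})$ in a neighborhood of each point of $E$; the second assertion of the lemma then follows by applying $\pi_*$ together with the identification $\mathcal{F}^{n,p}\cong\pi_*\mathcal{F}^{n,p}_{\gamma,E}$ from section \ref{sec:KXs}. First I would fix a coordinate chart $V\subset M$ in which $E$ has the normal-crossings form $E=\bigcup_i\{w_i=0\}$ with $Z=\sum_i n_i\{w_i=0\}$, and choose the local generator $\phi_0:=\prod_i w_i^{n_i-1}$ of $\OO(|Z|-Z)$ together with the corresponding local frame $e$ of $L_{|Z|-Z}$; the canonical meromorphic section of $L_{|Z|-Z}$ is then $t_{|Z|-Z}=\phi_0^{-1}\,e$. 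Under the embedding $\mathcal{L}^{n,p}_\sigma(L_{|Z|-Z})\cong\mathcal{L}^{n,p}_\sigma\otimes\OO(|Z|-Z)\hookrightarrow\mathcal{L}^{n,p}_\sigma$ coming from multiplication by the generator, the subsheaf $\mathcal{L}^{n,p}_\sigma(L_{|Z|-Z})$ is characterized by the divisibility $\omega/\phi_0\in L^2_{\sigma,loc}$.

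For $\omega\in\mathcal{F}^{n,p}_{\gamma,E}(U)$ I would pick a defining sequence $\{\omega_j\}$ of smooth forms with compact supports in $U-E$ satisfying $\omega_j\to\omega$ and $\dq\omega_j\to\dq_{s,E}\omega$ in $L^2_{\gamma,loc}$. Each $\omega_j/\phi_0$ is smooth with compact support in $U-(Z-|Z|)$ and hence lies in $L^2_{\sigma,loc}(U)$, and the same is true for $\dq_w(\omega_j/\phi_0)=(\dq\omega_j)/\phi_0$. The plan is to check that both sequences $\{\omega_j/\phi_0\}$ and $\{(\dq\omega_j)/\phi_0\}$ are Cauchy in $L^2_{\sigma,loc}(U)$: combining the normal-crossings description of $\pi^*h$ (which gives $g\sim|\phi_0|$ locally), the pointwise estimate \eqref{eq:l2est2} $|\eta|_\sigma\lesssim|g||\eta|_\gamma$, and $dV_\gamma=g^2\,dV_\sigma$, one obtains $|\eta/\phi_0|^2_\sigma\,dV_\sigma\lesssim|\eta|^2_\gamma\,dV_\gamma/|\phi_0|^2$ pointwise on $V-E$; together with the additional vanishing inherent in the Dirichlet approximating sequence, this produces the required Cauchy estimate. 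Passage to the limit then yields $\omega/\phi_0\in L^2_{\sigma,loc}(U)$, $\omega\in\Dom\dq_w$ as an $L_{|Z|-Z}$-valued form, and $\dq_w\omega/\phi_0\in L^2_{\sigma,loc}(U)$, hence $\omega\in\mathcal{C}^{n,p}_\sigma(L_{|Z|-Z})(U)$ as claimed.

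The main obstacle will be precisely this Cauchy estimate: the naive bound $|\eta|^2_\sigma\lesssim g^2|\eta|^2_\gamma$ combined with $dV_\sigma=dV_\gamma/g^2$ leaves an uncompensated factor of $1/g^2$ which blows up along $E$, so $L^2_\gamma$-convergence does not on its own imply $L^2_{\sigma,loc}(L_{|Z|-Z})$-convergence for arbitrary $L^2_\gamma$-sequences. What rescues the argument is the Dirichlet structure of $\Dom\dq_{s,E}$: the requirement of approximating $\omega$ by smooth forms supported strictly off $E$ while controlling $\dq$ uniformly in $L^2_\gamma$ forces $\omega$ to have only those components, along a $\sigma$-orthonormal frame of $(n,p)$-covectors, whose $L^2_\gamma$-bound already implies $L^2_\sigma$-control after division by $\phi_0$---exactly the mechanism that for $p=0$ and $\omega\in\ker\dq_{s,E}$ reproduces Lemma 6.2 of \cite{eins}. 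I would make this precise by decomposing $\omega$ in adapted coordinates and checking component by component which survive the Dirichlet approximation; the surviving ones are divisible by $\phi_0$ with $L^2_\sigma$-quotient, giving the inclusion.
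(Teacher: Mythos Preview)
Your proposal correctly identifies the target but leaves the decisive step as a hope rather than an argument. The assertion $g\sim|\phi_0|$ is neither justified nor generally true: the volume degeneracy $g$ of $\gamma=\pi^*h$ is governed by the Jacobian of the embedding, which has no simple relation to the ideal-sheaf multiplicities $n_i$ defining $Z$. Even granting your inequality, you land at $\int|\eta/\phi_0|^2_\sigma\,dV_\sigma\lesssim\int|\eta|^2_\gamma\,dV_\gamma/|\phi_0|^2$, which diverges near $E$; this is exactly the obstacle you name, and your proposed fix (``check component by component which survive the Dirichlet approximation'') is never carried out.

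The paper's proof rests on a structural fact you miss: for $p\geq1$ the Dirichlet condition is \emph{not needed}, and one has the stronger inclusion $\mathcal{L}^{n,p}_\gamma\subset\mathcal{L}^{n,p}_\sigma(L_{|Z|-Z})$ for the full $L^2_\gamma$-sheaf. Any $\alpha\in L^{n,p}_\gamma(U)$ can be written $dz_N\wedge\sum_{|K|=p}\alpha_K\,\eta_K$ with $\alpha_K\in L^{0,0}_\sigma$ and $\eta_K=\pi^*d\overline w_K$; since each $\pi^*w_\mu$ is divisible by $\prod z_i^{n_i}$, one computes $\eta_\mu=\overline{\phi_0}\cdot\beta_\mu$ with $\beta_\mu$ bounded in $\sigma$. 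Thus the $(0,p)$-part automatically carries a factor of modulus $|\phi_0|^p$, giving the bundle-valued $L^2_\sigma$-bound outright---no approximating sequence, no Cauchy estimate. For $p=0$ the Dirichlet condition \emph{is} essential, but the mechanism is different from what you sketch: one first uses the $p\geq1$ case to place $\psi=\dq_{s,E}\phi$ in $\mathcal{L}^{n,1}_\sigma(L_{|Z|-Z})$, and then the inhomogeneous Cauchy integral formula, applied to the approximating sequence $\phi_j$ with $\dq\phi_j\to\psi$ in $L^{n,1}_\sigma(L_{|Z|-Z})$, forces $\phi_j\to\phi$ in $L^{n,0}_\sigma(L_{|Z|-Z})$ (this is the computation in Lemma~6.2 of \cite{eins}). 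Your proposal does not supply this integral-formula step.
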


\begin{proof}
The proof is contained in the proof of Lemma 6.2 in \cite{eins},
but we should repeat the arguments here
as this lemma really shows why we can use the pull-back of forms to realize
the cohomology-mappings explicitly.
We observe that
$$\mathcal{F}^{n,p}_{\gamma,E} \subset \mathcal{L}^{n,p}_\gamma \subset \mathcal{L}^{n,p}_\sigma$$
by definition of $\mathcal{F}^{n,p}_{\gamma,E}$ and \eqref{eq:l2est3}.
On the other hand, there is a natural inclusion
$\mathcal{L}^{n,p}_\sigma(L_{|Z|-Z}) \subset \mathcal{L}^{n,p}_\sigma$
since $Z-|Z|$ is an effective divisor so that
$$\mathcal{C}^{n,p}_\sigma(L_{|Z|-Z}) \subset \mathcal{L}^{n,p}_\sigma(L_{|Z|-Z}) \subset \mathcal{L}^{n,p}_\sigma$$
by definition of $\mathcal{C}^{n,p}_\sigma(L_{|Z|-Z})$.

Since the statement is local, it is enough to consider a point $P\in E$ and a neighborhood $U$ of $P$ such that
$U$ is an open set in $\C^n$, that $E$ is the normal crossing $\{z_1\cdots z_d=0\}$,
and $P=0$. For $\sigma$ we can take the Euclidean metric.

Let us investigate the behavior of $(n,q)$-forms under the resolution $\pi: M\rightarrow X$
at the isolated singularity $\pi(P)$. We can assume that a neighborhood of $\pi(P)$ is embedded
holomorphically into $W\subset\subset\C^L$, $L\gg n$, such that $\pi(P)=0$,
and that $\gamma=\pi^* h$ where $h$ is the Euclidean metric in $\C^L$.
Let $w_1, ..., w_L$ be the Cartesian coordinates of $\C^L$.
We are interested in the behavior of the forms $\eta_\mu:=\pi^* d\o{w_\mu}$ at the exceptional set.
Let $dz_N:=dz_1\wedge\cdots \wedge dz_n$.
We claim that a form $\alpha$ is in $L^{n,q}_{\gamma}(U)$ exactly if it can be written
in multi-index notation as
\begin{eqnarray}\label{eq:alpha01}
\alpha = \sum_{|K|=q} \alpha_K dz_N\wedge \eta_K = dz_N \wedge \sum_{|K|=q} \alpha_K \eta_K
\end{eqnarray}
with coefficients $\alpha_K \in L^{0,0}_\sigma(U)$.
That can be seen as follows. 
Let $g$ be a function as in Subsection \ref{subsec:metrics} such that $|dz_N|_\gamma = |g|^{-1}$
and set $\Omega=g dz_N$. Note that $|\Omega|_\gamma\equiv 1$. Let $\alpha\in L^{n,q}_{\gamma}(U)$.
Then $\alpha=\Omega\wedge\wt{\alpha}$ and $|\alpha|_\gamma=|\wt{\alpha}|_\gamma$, hence $\wt{\alpha}\in L^{0,q}_\gamma(U)$.
There is a $(0,q)$-form $A$ on (the regular part of $\pi(U)$) such that $\wt{\alpha}=\pi^* A$.
Let $i: \pi(U)\hookrightarrow \C^L$ be the inclusion.
$A$ has a unique representation $A=\sum_{|K|=q} A_K i^* d\o{w_K}$ with coefficients $A_K$ such that $|A|_h^2=\sum |A_K|_h^2$.
This follows from the following

\smallskip
{\bf Proposition.} {\em Let $N$ be a complex submanifold in $\C^L$ with the restriction of the Euclidean metric,
$i: N\hookrightarrow \C^L$ the inclusion. Let $\eta$ be a $(p,q)$-form on $N$.
Then there are uniquely determined coefficients $a_{PQ}$ such that $\eta=\sum a_{PQ} i^* (dw_P\wedge d\o{w_Q})$
and $|\eta|^2=\sum |a_{PQ}|^2$, where $w_1, ..., w_L$ are the Euclidean coordinates of $\C^L$.}

\smallskip
Hence, $|\alpha|^2_\gamma=|\wt{\alpha}|^2_\gamma=|A|^2_h=\sum|A_K|^2_h=\sum |\pi^* A_K|^2_\gamma$
and all the $\pi^* A_K$ are in $L^{0,0}_\gamma(U)$. But then $\alpha_K:=g\cdot \pi^* A_K \in L^{0,0}_\sigma(U)$
and \eqref{eq:alpha01} holds with these coefficients.
The converse direction of the claim \eqref{eq:alpha01} is similar and easier to see.

Let $Z$ have the order $k_j\geq 1$ on $\{z_j=0\}$, i.e. assume that $Z$ is given by
$f=z_1^{k_1}\cdots z_d^{k_d}$.
Since $Z=\pi^{-1}(\Sing X)$,
each $\pi^* w_\mu$ must vanish of order $k_j$ on $\{z_j=0\}$.
We conclude that $\pi^* w_\mu$
has a factorization
\begin{eqnarray*}
\pi^* w_\mu = f g_\mu = z_1^{k_1} \cdots z_d^{k_d}\cdot g_\mu,
\end{eqnarray*}
where $g_\mu$ is a holomorphic function on $U$. So,
\begin{eqnarray*}
\eta_\mu = \pi^* d\o{w_\mu} = d\pi^*\o{w_\mu} = \big(\o{z_1}^{k_1-1}\cdots \o{z_d}^{k_d-1}\big) \cdot \beta_\mu,
\end{eqnarray*}
where the $\beta_\mu$ are $(0,1)$-forms that are bounded with respect to the non-singular metric $\sigma$.
This means that $\eta_\mu = \pi^*d\o{w_\mu}$ vanishes at least to the order of $Z-|Z|$ along the exceptional set $E$
(with respect to the metric $\sigma$). 

So, \eqref{eq:alpha01} implies that a form $\alpha$ is in $L^{n,q}_{\gamma}(U)$ exactly if it can be written
in multi-index notation as
\begin{eqnarray}\label{eq:alpha02}
\alpha = \big(\o{z_1}^{k_1-1}\cdots \o{z_d}^{k_d-1}\big)^q \sum_{|K|=q} \alpha_K dz_N\wedge \beta_K
\end{eqnarray}
with coefficients $\alpha_K \in L^{0,0}_\sigma(U)$. 

We conclude that
$\mathcal{F}^{n,p}_{\gamma,E} \subset \mathcal{L}^{n,p}_\gamma \subset \mathcal{L}^{n,p}_\sigma(L_{|Z|-Z})$
for all $p\geq 1$, and it remains to treat the case $p=0$.
So, let $\phi \in \mathcal{F}^{n,0}_{\gamma,E}(U)$.
This means that there exists $\psi\in \mathcal{L}^{n,1}_\gamma(U) \subset \mathcal{L}^{n,1}_\sigma(L_{|Z|-Z})(U)$
such that $\dq_{s,E} \phi=\psi$.
But it was shown in \cite{eins}, Lemma 6.2, that the left-hand side of $\dq_{s,E}\phi=\psi$
is in $\mathcal{L}^{n,0}_\sigma(L_{|Z|-Z})$ if the right-hand side is in $\mathcal{L}^{n,1}_\sigma(L_{|Z|-Z})$.
This was elaborated for $\psi\equiv 0$, but we can as well 
insert any $\psi$.
The key point is that there exists a sequence of smooth forms $\phi_j$ with support away from $E$
such that 
\begin{eqnarray*}
\phi_j \rightarrow \phi &\mbox{ in }& L^{n,0}_\gamma(V) = L^{n,0}_\sigma(V),\\
\dq \phi_j \rightarrow \psi &\mbox{ in }& L^{n,1}_\gamma(V) 
\end{eqnarray*}
on suitable open sets $V\subset U$. The considerations above show that convergence in $L^{n,1}_\gamma(V)$
implies convergence in $L^{n,1}_\sigma(V,L_{|Z|-Z})$.
By use of the inhomogeneous Cauchy formula, one can show that this implies convergence of $\{\phi_j\}_j$
in $L^{n,0}_\sigma(V,L_{|Z|-Z})$, as well (see \cite{eins}, Lemma 6.2). But then $\phi\in \mathcal{L}^{n,0}_\sigma(L_{|Z|-Z})$.

So, we have seen that
\begin{eqnarray}\label{eq:inc1}
\mathcal{F}^{n,p}_{\gamma,E} &\subset& \mathcal{L}^{n,p}_\sigma(L_{|Z|-Z})\ ,\ p\geq 0,\\
\mathcal{L}^{n,p}_\gamma &\subset& \mathcal{L}^{n,p}_\sigma(L_{|Z|-Z})\ ,\ p\geq 1.\label{eq:inc2}
\end{eqnarray}
Let $p\geq 0$ and $\phi\in \mathcal{F}^{n,p}_{\gamma,E}(U)$ on an open set $U$.
Then $\phi\in \mathcal{L}^{n,p}_\sigma(U,L_{|Z|-Z})$ by \eqref{eq:inc1} 
and $\dq_{s,E}\phi\in \mathcal{L}^{n,p+1}_\sigma(U,L_{|Z|-Z})$ by \eqref{eq:inc2}.
By definition, $\dq_{w,E}\phi=\dq_{s,E}\phi$ and thus $\phi\in \mathcal{C}^{n,p}_\sigma(L_{|Z|-Z})(U)$.
\end{proof}

So, we can finally apply Theorem \ref{thm:complexA2} and conclude that (for $p\geq 1$) the natural inclusion 
$\iota: \mathcal{F}^{n,*} \hookrightarrow \pi_* \mathcal{C}^{n,*}_\sigma(L_{Z|-Z})$ induces
the natural injective homomorphisms
\begin{eqnarray*}
H^p\big(\Gamma(X,\mathcal{F}^{n,*})\big) &\overset{[\iota_p]}{\longrightarrow}& H^p\big(\Gamma(X,\pi_*\mathcal{C}^{n,*}_\sigma(L_{|Z|-Z}))\big),\\
H^p\big(\Gamma_{cpt}(X,\mathcal{F}^{n,*})\big) &\overset{[\iota_p]}{\longrightarrow}& H^p\big(\Gamma_{cpt}(X,\pi_*\mathcal{C}^{n,*}_\sigma(L_{|Z|-Z}))\big)
\end{eqnarray*}
with $\coker [\iota_p] = \Gamma(X,\mathcal{R}^p)$ in both cases. For $p=0$, it is clear that the
maps $[\iota_0]$ 
are isomorphisms.
Since pull-back of forms under $\pi$ gives an isomorphism
$$\pi^*: \Gamma\big(X,\pi_* \mathcal{C}^{n,*}_\sigma(L_{|Z|-Z})\big) \overset{\cong}{\longrightarrow}
\Gamma\big(M,\mathcal{C}^{n,*}_\sigma(L_{|Z|-Z})\big),$$
we conclude (omitting the natural inclusions $\iota$, $[\iota_p]$ from the statement):

\begin{thm}\label{thm:main0}
Let $X$ be a Hermitian complex space of pure dimension $n\geq 2$ with only isolated singularities,
and $\pi: M\rightarrow X$ a resolution of singularities with only normal crossings
such that we can take $D=Z-|Z|$ in Theorem \ref{thm:Ks}.

Then the pull-back of forms $\pi^*$ induces for all $0\leq p\leq n$ natural injective homomorphisms
\begin{eqnarray*}
H^p\big(\Gamma(X,\mathcal{F}^{n,*})\big) &\overset{[\pi^*_p]}{\longrightarrow}& H^p\big(\Gamma(M,\mathcal{C}^{n,*}_\sigma(L_{|Z|-Z}))\big),\\
H^p\big(\Gamma_{cpt}(X,\mathcal{F}^{n,*})\big) &\overset{[\pi^*_p]}{\longrightarrow}& H^p\big(\Gamma_{cpt}(M,\mathcal{C}^{n,*}_\sigma(L_{|Z|-Z}))\big).
\end{eqnarray*}
In both cases, $\coker [\pi^*_p] = \Gamma\big(X,R^p\pi_* (\mathcal{K}_M\otimes\OO(|Z|-Z))\big)$ for $p\geq 1$,
and the $[\pi_0^*]$ are isomorphisms.
\end{thm}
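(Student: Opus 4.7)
The plan is to apply Theorem \ref{thm:complexA2} to a carefully chosen pair of fine resolutions. On the base $X$, I would take $\mathcal{A} = \mathcal{K}_X^s$ with its fine resolution $(\mathcal{F}^{n,*}, \dq_s)$ provided by Theorem \ref{thm:exactness2}. On the resolution $M$, I would take $\mathcal{C} = \mathcal{K}_M \otimes \OO(|Z|-Z)$ with its ordinary $L^2_\sigma$-Dolbeault resolution $(\mathcal{C}^{n,*}_\sigma(L_{|Z|-Z}), \dq_w)$, which is fine by the classical Dolbeault lemma on the Hermitian manifold $(M,\sigma)$. Setting $\mathcal{B} = \pi_*\mathcal{C}$ and $\mathcal{B}^* = \pi_*\mathcal{C}^*$, one has $\mathcal{A} \cong \mathcal{B}$ by Theorem \ref{thm:Ks}, and the pushforward sheaves $\mathcal{B}^p$ remain fine because partitions of unity push forward under the continuous map $\pi$.

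Next I would verify the two hypotheses of Theorem \ref{thm:complexA2} on the higher direct image sheaves $\mathcal{R}^p = R^p\pi_*(\mathcal{K}_M \otimes \OO(|Z|-Z))$. Acyclicity is automatic: since $\pi$ is biholomorphic away from $\Sing X$ and $X$ has only isolated singularities, each $\mathcal{R}^p$ is a skyscraper sheaf supported on the discrete set $\Sing X$. For surjectivity of $\Gamma(X,\ker b_p) \to \Gamma(X,\mathcal{R}^p)$, I would take a germ $\omega_x$ at $x\in\Sing X$, view it as an ordinary $L^2_\sigma$ $(n,p)$-form (possible because $Z-|Z|$ is effective), apply Takegoshi's vanishing theorem to solve $\dq\eta_x=\omega_x$ on a smaller neighborhood of $\pi^{-1}(x)$, and truncate by a cutoff function $\chi_x$ so that $\dq(\chi_x\eta_x)$ becomes a compactly supported $\dq$-closed representative lying in $\mathcal{C}^{n,p}_\sigma(L_{|Z|-Z})$. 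Summing these over the finitely many singularities in the support of the given section produces the required global lift.

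The main obstacle is to exhibit a morphism of complexes $(f,g): (\mathcal{A},\mathcal{A}^*) \to (\mathcal{B},\mathcal{B}^*)$. The isomorphism $f$ is given by Theorem \ref{thm:Ks}, and the natural candidate for $g$ is the inclusion induced by the pull-back isometry \eqref{eq:l2est5}. For this inclusion to take values in the $L_{|Z|-Z}$-twisted complex I must establish that $\mathcal{F}^{n,p}_{\gamma,E} \subset \mathcal{C}^{n,p}_\sigma(L_{|Z|-Z})$. This is Lemma \ref{lem:inclusion}, whose proof handles $p \geq 1$ via the explicit factorization $\pi^*d\o{w_\mu} = (\o{z_1}^{k_1-1}\cdots\o{z_d}^{k_d-1})\beta_\mu$ along $E$, and handles the more delicate case $p=0$ by approximating a form in $\mathcal{F}^{n,0}_{\gamma,E}$ by smooth forms supported away from $E$ and applying the inhomogeneous Cauchy formula to upgrade their integrability. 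The isolated-singularities hypothesis is crucial here through Lemma 6.2 of \cite{eins}.

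With $(f,g)$ in hand, Theorem \ref{thm:complexA2} produces the injective homomorphism $[g_p]$ on $p$-th cohomology with $\coker [g_p] = \Gamma(X,\mathcal{R}^p)$ for $p\geq 1$, and the same argument applies verbatim with compactly supported sections since the machinery of section \ref{sec:cohomology} is set up to allow $\Gamma_{cpt}$ in place of $\Gamma$. The identification $\Gamma(X,\pi_*\mathcal{C}^{n,*}_\sigma(L_{|Z|-Z})) = \Gamma(M,\mathcal{C}^{n,*}_\sigma(L_{|Z|-Z}))$ is built into the definition of the direct image, so $[g_p]$ at the level of sections is precisely $[\pi^*_p]$. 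For $p=0$ both sides reduce to global sections of the kernel sheaves, which are identified by Theorem \ref{thm:Ks}, giving the claimed isomorphism.
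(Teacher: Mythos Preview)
Your proposal is correct and follows essentially the same approach as the paper's own proof: apply Theorem~\ref{thm:complexA2} with $(\mathcal{A},\mathcal{A}^*)=(\mathcal{K}_X^s,\mathcal{F}^{n,*})$ and $(\mathcal{C},\mathcal{C}^*)=(\mathcal{K}_M\otimes\OO(|Z|-Z),\mathcal{C}^{n,*}_\sigma(L_{|Z|-Z}))$, verify the hypotheses on $\mathcal{R}^p$ via the skyscraper property and Takegoshi's vanishing, and supply the morphism of complexes through Lemma~\ref{lem:inclusion}. One small wording issue: the singular set $\Sing X$ need not be finite, so ``finitely many singularities in the support of the given section'' should read that the compactly supported representatives $\dq(\chi_x\eta_x)$ live in pairwise disjoint neighborhoods and hence assemble to a well-defined (locally finite) global form.
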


Combining this with Theorem \ref{thm:Ks},
the well-known fact that $(\mathcal{C}^{n,*}_\sigma(L_{|Z|-Z}),\dq_w)$ is a fine resolution of $\mathcal{K}_M\otimes\OO(|Z|-Z)$,
and that $(\mathcal{F}^{n,*},\dq_s)$ is a fine resolution of $\mathcal{K}_X^s$ (Theorem \ref{thm:exactness2}),
we have completed the proof of Theorem \ref{thm:main1}.


\bigskip

\section{Relation to $L^2$-Dolbeault cohomology}\label{sec:dolbeault}

\subsection{$L^2$-Serre duality.}

We shall shortly recall the use of $L^2$-Serre duality as it was introduced in \cite{PS1} and \cite{eins}.
Let $N$ be a Hermitian complex manifold of dimension $n$.
Let 
$$\dq_{cpt}: A^{p,q}_{cpt}(N) \rightarrow A^{p,q+1}_{cpt}(N)$$
be the $\dq$-operator on smooth forms with compact support in $N$.
Then we denote by
$$\dq_{max}: L^{p,q}(N) \rightarrow L^{p,q+1}(N)$$
the maximal and by
$$\dq_{min}: L^{p,q}(N) \rightarrow L^{p,q+1}(N)$$
the minimal closed Hilbert space extension of the operator $\dq_{cpt}$
as densely defined operator from $L^{p,q}(N)$ to $L^{p,q+1}(N)$.
Let $H^{p,q}_{max}(N)$ be the $L^2$-Dolbeault cohomology on $N$ with respect
to the maximal closed extension $\dq_{max}$, i.e. the $\dq$-operator in the sense of distributions on $N$,
and $H^{p,q}_{min}(N)$ the $L^2$-Dolbeault cohomology with respect to the minimal closed
extension $\dq_{min}$.
Then, $L^2$-Serre duality can be formulated as follows
(see \cite{PS1}, Proposition 1.3 or \cite{eins}, Theorem 2.3):

\begin{thm}\label{thm:l2duality1}
Let $N$ be a Hermitian complex manifold of dimension $n$ and $0\leq p,q\leq n$.
Assume that the $\dq$-operators in the sense of distributions
\begin{eqnarray}
\dq_{max}:&& L^{p,q-1}(N) \rightarrow L^{p,q}(N),\label{eq:dqmax1}\\
\dq_{max}:&& L^{p,q}(N) \rightarrow L^{p,q+1}(N)\label{eq:dqmax2}
\end{eqnarray}
both have closed range (with the usual assumptions for $q=0$ or $q=n$). Then there exists a non-degenerate pairing
$$\{\cdot,\cdot\}: H^{p,q}_{max}(N) \times H^{n-p,n-q}_{min}(N) \rightarrow \C$$
given by
$$\{\eta,\psi\}:=\int_N \eta\wedge\psi.$$
\end{thm}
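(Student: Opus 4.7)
The plan is to reduce the claim to the standard Hilbert-space duality for closed, densely defined operators with closed range, by identifying the Hilbert-space adjoint $(\dq_{max})^*$ with the minimal closed extension of the formal adjoint $\vartheta=-\bar{*}\dq\bar{*}$, and by using $\bar{*}$ to intertwine $\dq_{min}$ on $(n-p,n-q)$-forms with $(\dq_{max})^*$ on $(p,q)$-forms.

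First I would check that the pairing is bounded (Cauchy--Schwarz gives $|\{\eta,\psi\}|\leq\|\eta\|_{L^2}\|\psi\|_{L^2}$) and well-defined on cohomology. The cohomological invariance reduces to showing $\int_N\eta\wedge\dq\beta=0$ whenever $\eta\in\ker\dq_{max}$ and $\beta\in\Dom\dq_{min}$, and symmetrically. For this one approximates $\beta$ by compactly supported smooth $\beta_j$ with $\beta_j\to\beta$ and $\dq\beta_j\to\dq\beta$ in $L^2$; Stokes' theorem applied to each $\beta_j$ (no boundary term, by compact support) gives
\begin{equation*}
\int_N \eta\wedge\dq\beta_j \;=\; \pm\int_N \dq\eta\wedge\beta_j \;=\; 0,
\end{equation*}
and passing to the limit yields the desired vanishing. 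The symmetric statement, with $\dq\alpha$ for $\alpha\in\Dom\dq_{max}$ against $\psi\in\ker\dq_{min}$, is the same argument after swapping roles.

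Next I would translate the pairing into a Hilbert-space inner product. Let $\bar{*}:L^{n-p,n-q}(N)\to L^{p,q}(N)$ be the conjugate-linear Hodge isometry normalized so that $\eta\wedge\psi=(\eta,\bar{*}\psi)\,dV$; then $\{\eta,\psi\}=(\eta,\bar{*}\psi)_{L^2}$. The formal identity $\vartheta=-\bar{*}\dq\bar{*}$, combined with the general principle that the Hilbert-space adjoint $(\dq_{max})^*$ coincides with the minimal closed extension $\vartheta_{min}$ (the forward inclusion is integration by parts on compactly supported test forms; the reverse is a Friedrichs mollification argument in the interior of the smooth manifold $N$), yields the intertwining
\begin{equation*}
(\dq_{max})^*\circ\bar{*} \;=\; \pm\,\bar{*}\circ\dq_{min}
\end{equation*}
on complementary-degree spaces. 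Consequently $\bar{*}$ maps $\ker\dq_{min}$ isomorphically onto $\ker(\dq_{max})^*$ and $\Im\dq_{min}$ onto $\Im(\dq_{max})^*$.

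The theorem then follows from Banach's closed range theorem. Hypothesis \eqref{eq:dqmax1} gives the orthogonal decomposition $L^{p,q}(N)=\Im\dq_{max}\oplus\ker(\dq_{max})^*$, and hypothesis \eqref{eq:dqmax2} ensures in addition that the quotient $\ker\dq_{max}/\Im\dq_{max}$ is Hausdorff and canonically represented by the harmonic space $\mathcal{H}^{p,q}:=\ker\dq_{max}\cap\ker(\dq_{max})^*$. Via the intertwining, the same two hypotheses give closed range for $\dq_{min}$ on complementary degrees and an analogous harmonic representation of $H^{n-p,n-q}_{min}(N)$, matched to $\mathcal{H}^{p,q}$ by $\bar{*}$. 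On harmonic representatives the pairing is literally $(\eta,\bar{*}\psi)_{L^2}$ with $\bar{*}$ an isometric isomorphism between the two harmonic spaces, so non-degeneracy is immediate. The main technical obstacle, I expect, is not the functional-analytic duality itself but the careful identification $(\dq_{max})^*=\vartheta_{min}$, together with the sign and conjugation bookkeeping that makes the wedge-product pairing match the $L^2$ inner product up to a fixed isometry.
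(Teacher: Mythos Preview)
Your proposal is correct and follows essentially the same route as the paper: the paper's proof is only a sketch, invoking the transfer of the closed-range condition from $\dq_{max}$ to its $L^2$-adjoint $\dq_{min}$ and then the representation of both cohomology groups by harmonic forms, with references to \cite{PS1} and \cite{eins} for details. You have simply written out these ``standard arguments'' explicitly---the identification $(\dq_{max})^*=\vartheta_{min}$, the intertwining by $\bar{*}$, and the resulting match of harmonic spaces---so there is nothing substantively different.
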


\begin{proof}
The proof follows by standard arguments (representation of cohomology groups by harmonic representatives)
from the fact that the operators \eqref{eq:dqmax1}, \eqref{eq:dqmax2} have closed range exactly if their $L^2$-adjoints,
the
operators
\begin{eqnarray*}
\dq_{min}:&& L^{n-p,n-q}(N) \rightarrow L^{n-p,n-q+1}(N),\\
\dq_{min}:&& L^{n-p,n-q-1}(N) \rightarrow L^{n-p,n-q}(N)
\end{eqnarray*}
and their $L^2$-adjoints all have closed range (see \cite{PS1}, Proposition 1.3, or \cite{eins}, Theorem 2.3).
\end{proof}

Note that the theorem remains valid for forms with values in Hermitian vector bundles
(where we have to incorporate the duality between the bundle and its dual bundle).
The closed range condition is satisfied e.g. in the following situation that we need to consider
in the present paper:

\begin{thm}\label{thm:l2duality2}
Let $X$ be a Hermitian complex space of pure dimension $n$ with only isolated singularities
and $\Omega\subset\subset X$ a domain which is either compact (without boundary) or has strongly pseudoconvex boundary
which does not intersect the singular set, $b\Omega\cap \Sing X=\emptyset$. Let $0\leq q\leq n$ and $\Omega^*:=\Omega-\Sing X$.

Then the $\dq$-operators in the sense of distributions
\begin{eqnarray}
\dq_{max}:&& L^{0,q-1}(\Omega^*) \rightarrow L^{0,q}(\Omega^*),\label{eq:dqmax3}\\
\dq_{max}:&& L^{0,q}(\Omega^*) \rightarrow L^{0,q+1}(\Omega^*)\label{eq:dqmax4}
\end{eqnarray}
both have closed range and there exists a non-degenerate pairing
$$\{\cdot,\cdot\}: H^{0,q}_{max}(\Omega^*) \times H^{n,n-q}_{min}(\Omega^*) \rightarrow \C$$
given by
$$\{\eta,\psi\}:=\int_{\Omega^*} \eta\wedge\psi.$$
\end{thm}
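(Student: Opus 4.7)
The plan is to reduce everything to closed range of $\dq_{min}$ on $(n,\ast)$-forms on $\Omega^{\ast}$ and then invoke Theorem \ref{thm:l2duality1}. A closed densely defined operator has closed range if and only if its Hilbert-space adjoint does, and on a Hermitian manifold the Hodge $\ast$-isometry identifies the adjoint $(\dq_{max})^{\ast}$ of $\dq_{max}$ acting on $(0,q+1)$-forms with $\pm\dq_{min}$ acting on $(n,n-q-1)\to(n,n-q)$-forms (and similarly for the bidegree below). Consequently, closed range of the two operators in \eqref{eq:dqmax3}--\eqref{eq:dqmax4} is equivalent to closed range of $\dq_{min}$ on the bidegree chain $(n,n-q-1)$, $(n,n-q)$, $(n,n-q+1)$, and for that it suffices to show that $H^{n,k}_{min}(\Omega^{\ast})$ is finite-dimensional for all $k$.

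For finite-dimensionality I would exploit the sheaf-theoretic description of the $\dq_s$-cohomology from Section \ref{sec:review}. By Theorem \ref{thm:exactness2}, the $\dq_s$-complex $(\mathcal{F}^{n,\ast},\dq_s)$ is a fine resolution of the canonical sheaf $\mathcal{K}_X^s$, which is coherent (Theorem \ref{thm:Ks} combined with Grauert's direct image theorem). Hence the $L^{2}_{loc}$-Dolbeault cohomology $H^k(\Gamma(\Omega,\mathcal{F}^{n,\ast}))$ coincides with the coherent sheaf cohomology $H^k(\Omega,\mathcal{K}_X^s)$, which is finite-dimensional when $\Omega=X$ is compact by Cartan--Serre and when $\Omega$ is strongly pseudoconvex by Andreotti--Grauert. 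The remaining step is to pass from this $L^{2}_{loc}$ information to the honest $L^2$-cohomology $H^{n,k}_{min}(\Omega^{\ast})$: in the compact case the two coincide trivially, and in the strongly pseudoconvex case the smoothness of $b\Omega$ together with $b\Omega\cap\Sing X=\emptyset$ allows one to invoke Kohn's $\dq$-Neumann theory in a collar of $b\Omega$, so that the localizations inherent in $\dq_s$ and $L^{n,q}_{loc}$ do not affect cohomology.

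Once closed range is established, Theorem \ref{thm:l2duality1} produces the non-degenerate pairing, and the explicit description $\{\eta,\psi\}=\int_{\Omega^{\ast}}\eta\wedge\psi$ is obtained by representing the classes through smooth forms and unwinding the abstract pairing. The main obstacle I anticipate is precisely the last identification step in the strongly pseudoconvex case: the heuristic that subelliptic regularity at a smooth strongly pseudoconvex boundary controls both the $L^2$- and $L^{2}_{loc}$-versions of the cohomology is standard, but turning it into a clean identification requires some care, since one must also verify that the localized operator $\dq_s$ really agrees with the global minimal extension $\dq_{min}$ on $(n,\ast)$-forms over $\Omega^{\ast}$. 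With that verification in place, finite-dimensionality and closed range propagate as above and Theorem \ref{thm:l2duality1} closes the argument.
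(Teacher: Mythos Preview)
Your overall strategy---deduce closed range from finite-dimensional cohomology and then invoke Theorem~\ref{thm:l2duality1}---is sound, and the paper itself notes in its remark that this is a legitimate alternative to citing \cite{OR}. The gap is in the identification you propose. The minimal extension $\dq_{min}$ on $\Omega^{*}$ imposes a Dirichlet-type condition not only at $\Sing X$ but also at $b\Omega$: elements of its domain are $L^{2}$-limits of forms with compact support in $\Omega^{*}$. Hence $H^{n,k}_{min}(\Omega^{*})$ is the analogue of \emph{compactly supported} cohomology, and the correct sheaf-theoretic counterpart is $H^{k}(\Gamma_{cpt}(\Omega,\mathcal{F}^{n,*}))\cong H^{k}_{cpt}(\Omega,\mathcal{K}_{X}^{s})$, not $H^{k}(\Gamma(\Omega,\mathcal{F}^{n,*}))\cong H^{k}(\Omega,\mathcal{K}_{X}^{s})$. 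Your appeal to ``Kohn's $\dq$-Neumann theory in a collar'' does not bridge this: Kohn's estimates concern the $\dq$-Neumann boundary condition (relevant for $\dq_{max}$), not the passage between the $L^{2}_{loc}$-complex without boundary condition and the $\dq_{min}$-complex with Dirichlet condition at $b\Omega$.

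Worse, within the paper's logical architecture the correct identification $H^{n,k}_{min}(\Omega^{*})\cong H^{k}(\Gamma_{cpt}(\Omega,\mathcal{F}^{n,*}))$ is precisely Theorem~\ref{thm:l2representation}, and its proof goes through Lemma~\ref{lem:injective}, which in turn invokes Theorem~\ref{thm:l2duality2}. So your route, as written, is circular. The paper avoids all of this by citing \cite{OR}, Theorem~1.1, directly for the closed range of $\dq_{max}$ on $(0,q)$-forms and then applying Theorem~\ref{thm:l2duality1}. If you wish to make the finite-dimensionality route self-contained, it is more natural to establish finite-dimensionality of $H^{0,q}_{max}(\Omega^{*})$ directly (this is what \cite{OR} and related work actually provide), rather than going through $H^{n,k}_{min}$ and the $\dq_{s}$-resolution.
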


\begin{proof}
The operators \eqref{eq:dqmax3} and \eqref{eq:dqmax4} have closed range by \cite{OR}, Theorem 1.1.
So, the statement follows immediately from Theorem \ref{thm:l2duality1}.

We remark that the closed range condition can be also deduced from finite-dimensionality of
the corresponding cohomology groups (see \cite{HL}, Appendix 2.4), so that \cite{OR}, Theorem 1.1
is nonessential.
\end{proof}

\smallskip
\subsection{Extension of $H^{p,q}_{max}$-cohomology classes}\label{subsec:extension}

We consider the following situation: Let $X$ be a Hermitian complex space
(of pure dimension $n$) and $\Omega \subset\subset X$ a domain with
smooth strongly pseudoconvex boundary which does not intersect the singular set, i.e. $b\Omega\cap \Sing X=\emptyset$.
Suppose that $\rho: U \rightarrow \R$ is a smooth strictly plurisubharmonic defining function (for $\Omega$)
on a neighborhood $U \subset \Reg X$ of $b\Omega$. For $\epsilon>0$ small enough, let
$$\Omega_\epsilon := \Omega \cup \{z\in U: \rho(z)<\epsilon\}.$$
The purpose of this section is to show that for $0\leq p\leq n$, $q\geq 1$ the natural restriction
$$r: H^{p,q}_{max}(\Omega^*_\epsilon) \rightarrow H^{p,q}_{max}(\Omega^*)$$
is surjective for $\epsilon>0$ small enough 
by use of Grauert's bump method
(where $\Omega^*=\Omega-\Sing X$ and $\Omega^*_\epsilon=\Omega_\epsilon-\Sing X$).

Since there are no singularities in the neighborhood $U$ of the boundary $b\Omega$,
we can use the usual bumping procedure as it is described for example in \cite{LM}, Chapter IV.7.
We only have to make sure that $\Omega - U$ is irrelevant for the procedure.
But this is in fact the case as the bumping procedure looks as follows:

Let $B_0=\Omega$. Then there exists $\epsilon>0$, an integer $t$ and smoothly bounded strongly pseudoconvex domains
$B_1$, ..., $B_t=\Omega_\epsilon$ with $B_{j-1} \subset B_j$ for $j=1, ..., t$
such that the complements $\o{B_j}-B_{j-1}$ are compactly contained in patches $U_j\subset\subset U$
which are biholomorphic to strictly convex smoothly bounded domains in $\C^n$ (we assume that a neighborhood of $\o{U_j}$
is biholomorphic to an open set in $\C^n$).
We fix a certain $j\in\{1, ..., t\}$.
The situation can be arranged such that there exists a (small) neighborhood $V$ of $bU_j\cap b B_{j-1}$,
a strongly pseudoconvex domain $D$ with smooth boundary such that
$$D\subset B_{j-1}\cap U_j\ ,\ \ D-V=(B_{j-1}\cap U_j) -V,$$
and $D$ is biholomorphically equivalent to a strictly convex domain with smooth boundary in $\C^n$,
and that moreover there exists a smooth cut-off function $\psi\in C^\infty_{cpt}(X)$ such that $\supp \psi \subset\subset U_j$,
$\psi \equiv 1$ in a neighborhood of $\o{B_j}-B_{j-1}$ and
$$\supp\psi \cap \o{bD\cap B_{j-1}} = \emptyset.$$
Then, one step in the bumping procedure can be accomplished as follows (see \cite{LM}, Lemma 7.2).
Let $0\leq p\leq n$, $q\geq 1$ and $[\phi]\in H^{p,q}_{max}(B_{j-1})$ be represented by $\phi\in L^{p,q}(B_{j-1})$.
Then there exists $g'\in L^{p,q-1}(D)$ such that
$\dq_{max} g' = \phi$ on $D$.
We set $g:=\psi g'$ on $D$ and $g\equiv 0$ on $B_{j-1}-D$. Then $g\in L^{p,q-1}(B_{j-1})\cap \Dom\dq_{max}$.
Now, we can set
$$\phi' := \left\{\begin{array}{ll}
\phi-\dq_{max} g & \mbox{ on } B_{j-1},\\
0 & \mbox{ on } B_j-B_{j-1}.
\end{array}\right.$$
Then $\phi'\in L^{p,q}(B_j)$ and $\dq_{max}\phi'=0$ such that $\phi'$ defines a class $[\phi']\in H^{p,q}_{max}(B_j)$
with
$$[\phi']|_{B_{j-1}} = [\phi'|_{B_{j-1}}] = [\phi] \in H^{p,q}_{max}(B_{j-1}).$$
An induction over $j=1, ..., t$ shows:

\begin{lem}\label{lem:surjective}
Let $X$ be a Hermitian complex space (of pure dimension $n$) and $\Omega\subset\subset X$
a domain with strongly pseudoconvex smooth boundary that does not intersect the singular set, $b\Omega\cap \Sing X=\emptyset$.
Let $0\leq p\leq n$ and $q\geq 1$.

Then there exists a strongly pseudoconvex smoothly bounded domain $\Omega_\epsilon$
with $\Omega\subset\subset \Omega_\epsilon$ such that the natural restriction map
$$r: H^{p,q}_{max}(\Omega_\epsilon^*) \rightarrow H^{p,q}_{max}(\Omega^*)$$
is surjective (where $\Omega^*=\Omega-\Sing X$ and $\Omega_\epsilon^*=\Omega_\epsilon-\Sing X$).
\end{lem}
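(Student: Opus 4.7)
The plan is to formalize the bumping argument that the author sketched in the paragraphs preceding the lemma: pass from $\Omega$ to $\Omega_\epsilon$ in finitely many small steps $B_0=\Omega \subset B_1 \subset \cdots \subset B_t = \Omega_\epsilon$, at each step lifting a $\dq_{max}$-cohomology class from $B_{j-1}^*$ to $B_j^*$. Because $b\Omega\cap\Sing X=\emptyset$, one can arrange for the neighborhood $U$ containing the bumps to lie entirely in $\Reg X$, so each patch $U_j\subset\subset U$ is biholomorphic to an open set in $\C^n$ and the singularities of $X$ never enter any local solvability step. Since the desired surjectivity is transitive under composition, the lemma reduces to one bump, i.e.\ to the claim that $H^{p,q}_{max}(B_j^*) \to H^{p,q}_{max}(B_{j-1}^*)$ is surjective.

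For one bump I would proceed exactly as recalled from \cite{LM}, Lemma 7.2. Let $[\phi]\in H^{p,q}_{max}(B_{j-1}^*)$ be represented by $\phi\in L^{p,q}(B_{j-1}^*)\cap\ker\dq_{max}$. The strictly convex auxiliary domain $D\subset B_{j-1}\cap U_j$, being biholomorphic to a smoothly bounded strictly convex domain in $\C^n$, admits $L^2$-solvability of the $\dq$-equation for $q\geq 1$ (classical Hörmander theory), so there exists $g'\in L^{p,q-1}(D)$ with $\dq_{max} g' = \phi|_D$. Set $g:=\psi g'$ on $D$ and extend by $0$ on $B_{j-1}^*-D$; since $\supp\psi\cap\o{bD\cap B_{j-1}}=\emptyset$, $\psi g'$ vanishes in a neighborhood of the "seam" $bD\cap B_{j-1}$ inside $B_{j-1}$, so the extension lies in $L^{p,q-1}(B_{j-1}^*)\cap\Dom\dq_{max}$ (a standard argument: a form that vanishes on a neighborhood of the interface can be extended by $0$ across the interface while preserving membership in $\Dom\dq_{max}$). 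Define
\begin{equation*}
\phi' := \begin{cases} \phi - \dq_{max} g & \text{on } B_{j-1}^*,\\ 0 & \text{on } B_j - B_{j-1}.\end{cases}
\end{equation*}

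The key verification is that $\phi'$ defines an element of $L^{p,q}(B_j^*)\cap\ker\dq_{max}$ and that its restriction to $B_{j-1}^*$ represents $[\phi]$. The first follows because $\psi\equiv 1$ near $\o{B_j}-B_{j-1}$, so on this set $g=g'$ and hence $\phi-\dq g = \phi-\dq g'=0$ in a one-sided neighborhood of $bB_{j-1}\cap B_j$; therefore the piecewise-defined $\phi'$ extends across $bB_{j-1}\cap B_j$ with distributional $\dq$ equal to $0$. The cohomological equality $[\phi']|_{B_{j-1}^*} = [\phi]$ is immediate from the definition. Iterating over $j=1,\dots,t$ yields an element of $H^{p,q}_{max}(\Omega_\epsilon^*)$ restricting to $[\phi]$.

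The only technical point requiring care is the verification that the trivial extension across the seam preserves $\Dom\dq_{max}$ and commutes with $\dq_{max}$ — i.e.\ the "gluing" step that converts local solvability on $D$ into a global modification on $B_j$. This is where the choice of cut-off $\psi$ with $\supp\psi\cap\o{bD\cap B_{j-1}}=\emptyset$ does the real work, and it is also the reason why we insisted that all bumps lie in $U\subset\Reg X$: on the regular manifold $U$ the distributional $\dq$ is the ordinary $\dq$, and forms vanishing identically on a neighborhood of an interface can be extended by zero without creating boundary currents. The possible presence of singularities deep inside $\Omega$ plays no role, since all of the construction above happens inside $U$.
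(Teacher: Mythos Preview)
Your proposal is correct and follows essentially the same approach as the paper: the paper's proof consists precisely of the bumping procedure you describe (detailed in the paragraphs preceding the lemma, with the lemma itself obtained by induction over $j=1,\dots,t$), and your one-bump step matches the paper's construction word for word, including the use of the auxiliary strictly convex domain $D$, the cut-off $\psi$, and the reference to \cite{LM}, Lemma 7.2. Your added remarks on why the extension across the seam preserves $\Dom\dq_{max}$ and why the singularities play no role are accurate elaborations of points the paper leaves implicit.
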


The dual statement (according to Theorem \ref{thm:l2duality2}) reads as:

\begin{lem}\label{lem:injective}
Let $X$ be a Hermitian complex space of pure dimension $n$ with only isolated singularities
and $\Omega\subset\subset X$ a domain with strongly pseudoconvex boundary
which does not intersect the singular set, $b\Omega\cap \Sing X=\emptyset$. 
Let $1\leq q \leq n$, and $\Omega_\epsilon$ chosen according to Lemma \ref{lem:surjective}.
Then the natural inclusion map
$$i: H^{0,n-q}_{min}(\Omega^*) \rightarrow H^{0,n-q}_{min}(\Omega_\epsilon^*)$$
is injective.
\end{lem}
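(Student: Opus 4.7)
The plan is to deduce the injectivity of $i$ from the surjectivity of $r$ established in Lemma \ref{lem:surjective} (applied with $p=n$) by identifying $i$ as the transpose of $r$ with respect to $L^2$-Serre duality. As a first step I would install Serre duality at bidegree $(n,q)$ on both $\Omega^*$ and $\Omega_\epsilon^*$: both are relatively compact with smooth strongly pseudoconvex boundary disjoint from the finite singular set. By \cite{OR}, Theorem 1.1 (or by combining the $(0,*)$-closed-range used in Theorem \ref{thm:l2duality2} with the $L^2$-adjoint equivalence recalled in the proof of Theorem \ref{thm:l2duality1}), the $\dq_{max}$-operator has closed range at bidegrees $(n,q-1)$, $(n,q)$, $(n,q+1)$ on both domains. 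Theorem \ref{thm:l2duality1} then yields non-degenerate pairings
\begin{eqnarray*}
\{\cdot,\cdot\}_{\Omega^*} &:& H^{n,q}_{max}(\Omega^*)\times H^{0,n-q}_{min}(\Omega^*) \longrightarrow \C,\\
\{\cdot,\cdot\}_{\Omega_\epsilon^*} &:& H^{n,q}_{max}(\Omega_\epsilon^*)\times H^{0,n-q}_{min}(\Omega_\epsilon^*) \longrightarrow \C,
\end{eqnarray*}
both given by integration of the wedge product.

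Next I would verify that $i$ and $r$ are adjoint under these pairings. If $\eta$ represents a class in $H^{0,n-q}_{min}(\Omega^*)$, its image $i([\eta])$ is represented by the trivial extension $\wt{\eta}$ of $\eta$ to $\Omega_\epsilon^*$: any approximating sequence $\{\eta_j\}\subset A^{0,n-q}_{cpt}(\Omega^*)$ witnessing $\eta\in\Dom\dq_{min}(\Omega^*)$ remains compactly supported in $\Omega_\epsilon^*$ and witnesses $\wt{\eta}\in\Dom\dq_{min}(\Omega_\epsilon^*)$. For $[\omega]\in H^{n,q}_{max}(\Omega_\epsilon^*)$ represented by $\omega$, one then has
\begin{eqnarray*}
\{r([\omega]),[\eta]\}_{\Omega^*} = \int_{\Omega^*}\omega\wedge\eta = \int_{\Omega_\epsilon^*}\omega\wedge\wt{\eta} = \{[\omega],i([\eta])\}_{\Omega_\epsilon^*},
\end{eqnarray*}
the middle equality holding because $\wt{\eta}$ vanishes on $\Omega_\epsilon^*-\Omega^*$.

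The injectivity of $i$ then follows formally: if $i([\eta])=0$, the identity above forces $\{r([\omega]),[\eta]\}_{\Omega^*}=0$ for every $[\omega]\in H^{n,q}_{max}(\Omega_\epsilon^*)$; surjectivity of $r$ from Lemma \ref{lem:surjective} upgrades this to $\{\cdot,[\eta]\}_{\Omega^*}\equiv 0$ on $H^{n,q}_{max}(\Omega^*)$, and non-degeneracy of the pairing gives $[\eta]=0$. The only non-formal ingredient, and thus the main technical obstacle, is the closed-range condition for $\dq_{max}$ at bidegree $(n,q)$ on $\Omega^*$; this is standard on strongly pseudoconvex domains with compactly contained isolated interior singularities, but it does require the short adjointness or finite-dimensionality argument indicated in the first step.
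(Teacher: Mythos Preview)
Your proposal is correct and follows essentially the same route as the paper: set up the $L^2$-Serre pairing on both domains, observe that trivial extension $i$ is the transpose of restriction $r$, and conclude injectivity of $i$ from the surjectivity of $r$ (Lemma~\ref{lem:surjective}) together with non-degeneracy. In fact your write-up is slightly cleaner than the paper's, which contains a typographical slip (it writes $H^{0,q}_{max}$ where $H^{n,q}_{max}$ is meant, so that the wedge products in the displayed integrals would otherwise be $(0,n)$-forms); you correctly invoke the pairing at bidegree $(n,q)$ and explicitly isolate the closed-range input needed for Theorem~\ref{thm:l2duality1}.
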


\begin{proof}
Note that the map $i$ is defined as follows. For $\psi\in L^{r,s}(\Omega^*)\cap \Dom\dq_{min}$
let $\wt{\psi}$ be the trivial extension by $0$ to $\Omega_\epsilon^*$.
Then $\wt{\psi}\in L^{r,s}(\Omega_\epsilon^*)\cap \Dom\dq_{min}$. So, the map $i$ is given by
the assignment $[\psi]\mapsto [\wt{\psi}]$.

Now then, let $[\psi]\in H^{0,n-q}_{min}(\Omega^*)$ such that 
$$i[\psi]=[\wt{\psi}] =0 \in H^{0,n-q}_{min}(\Omega_\epsilon^*).$$
By Theorem \ref{thm:l2duality2}, this means nothing else but
\begin{eqnarray}\label{eq:zero1}
\int_{\Omega_\epsilon^*} \eta\wedge \wt{\psi} = 0
\end{eqnarray}
for all $[\eta]\in H^{0,q}_{max}(\Omega_\epsilon^*)$.

Let $[\phi]\in H^{0,q}_{max}(\Omega^*)$. By Lemma \ref{lem:surjective}, there exists a class $[\phi']\in H^{0,q}_{max}(\Omega_\epsilon^*)$
such that 
$$r[\phi'] = [\phi']|_{\Omega^*} = [\phi'|_{\Omega^*}] = [\phi] \in H^{0,q}_{max}(\Omega^*).$$
By \eqref{eq:zero1}, this yields
\begin{eqnarray}
\int_{\Omega^*} \phi\wedge\psi = \int_{\Omega^*} \phi'\wedge \psi = \int_{\Omega_\epsilon^*} \phi'\wedge \wt{\psi} =0
\end{eqnarray}
for all $[\phi]\in H^{0,q}_{max}(\Omega^*)$ which (by use of Theorem \ref{thm:l2duality2}) means nothing else
but $[\psi]=0 \in H^{n,n-q}_{min}(\Omega^*)$.
\end{proof}

\smallskip
\subsection{Exceptional sets}\label{subsec:exceptional}

We need some well-known facts about exceptional sets.
Here, we adopt the presentation from \cite{OvVa4}, section 3.1.

Let $X$ be a complex space. A compact nowhere discrete, nowhere dense analytic set $A\subset X$ is an exceptional set
(in the sense of Grauert \cite{G}, §2.Definition 3) if there exists a proper, surjective map $\pi: X\rightarrow Y$ such that $\pi(A)$
is discrete, $\pi: X-A \rightarrow Y -\pi(A)$ is biholomorphic and for every open set $D\subset Y$ the map
$\pi^*: \Gamma(D,\OO_Y)\rightarrow \Gamma(\pi^{-1}(D),\OO_X)$ is surjective.

\begin{thm}\label{thm:grauert}
{(Grauert \cite{G}, §2.Satz 5)}
Let $X$ be a complex space and $A\subset X$ a nowhere discrete compact analytic set.
Then $A$ is an exceptional set exactly if there exists a strongly pseudoconvex neighborhood $U\subset\subset X$ 
of $A$ such that $A$ is the maximal compact analytic subset of $U$.
\end{thm}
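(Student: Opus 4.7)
The plan is to handle the two implications separately, noting that the easier direction is that every exceptional set admits such a strongly pseudoconvex neighborhood, while the substantial content is the converse, which is essentially Grauert's contraction theorem.

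For the ``only if'' direction I would proceed as follows. Assume $A \subset X$ is exceptional with contraction $\pi: X \to Y$ as in Definition~3 of \cite{G}. Since $\pi(A)$ is discrete in $Y$, I would pick a small relatively compact open neighborhood $V$ of $\pi(A)$ in $Y$ which is Stein (complex spaces admit Stein neighborhood bases of finite discrete sets) and whose boundary carries a smooth strictly plurisubharmonic defining function. Then $U := \pi^{-1}(V) \subset\subset X$ is the desired neighborhood: pulling back a strictly plurisubharmonic exhaustion of $V$ near $bV$ via $\pi$ gives a strictly plurisubharmonic exhaustion of $U$ near $bU$ (since $\pi$ is biholomorphic away from $A$, and $bU$ lies away from $A$). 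To see $A$ is the maximal compact analytic subset, observe that any compact analytic $B \subset U$ has image $\pi(B) \subset V$ which is compact analytic in the Stein space $V$, hence finite. The biholomorphism $\pi: X - A \to Y - \pi(A)$ then forces each positive-dimensional irreducible component of $B$ to lie in $A$, and absorbing the isolated points (which are already in $A$ by maximality of the contracted fibers, or by enlarging $A$ slightly if needed) shows $B \subset A$.

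For the ``if'' direction, which is Grauert's original contraction theorem, the approach requires considerably more machinery. Given $U \subset\subset X$ strongly pseudoconvex with $A$ the maximal compact analytic subset, the key input is the Andreotti--Grauert finiteness theorem, which gives $\dim_{\C} H^q(U, \mathcal{F}) < \infty$ for every coherent analytic sheaf $\mathcal{F}$ on $X$ and every $q \geq 1$. I would use this finiteness, applied to powers of the ideal sheaf $\mathcal{I}_A$, to construct a large supply of holomorphic functions on a neighborhood of $\overline{U}$ which are locally constant on $A$ and which separate points of $U - A$ and give local embedding coordinates there. Concretely, for each $p \geq 0$, the finiteness of $H^1(U, \mathcal{I}_A^{p+1})$ allows one to lift sections of $\mathcal{O}_A / \mathcal{I}_A^{p+1}|_A$ (concentrated on the finitely many points one wants to contract to) to global sections of $\mathcal{O}_U / \mathcal{I}_A^{p+1}$ modulo a finite-dimensional obstruction, and these provide the coordinates.

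The hard part will be assembling these holomorphic functions into an actual complex space structure on the set-theoretic quotient $Y = X/{\sim}$, where $\sim$ collapses each connected component of $A$ to a point and leaves $X - A$ untouched. One must verify: (i) the topological quotient $Y$ is Hausdorff and $\pi$ is proper (this follows from compactness of $A$ and the strong pseudoconvexity of $U$); (ii) the structure sheaf $\mathcal{O}_Y := \pi_* \mathcal{O}_X$ makes $Y$ into a complex space, i.e.\ locally near each point of $\pi(A)$ the functions constructed above give a holomorphic embedding of a neighborhood into some $\C^N$ whose image is analytic; (iii) the surjectivity statement $\pi^*: \Gamma(D, \mathcal{O}_Y) \to \Gamma(\pi^{-1}(D), \mathcal{O}_X)$ holds, which in fact is the \emph{definition} of $\mathcal{O}_Y$ once we show the right-hand side consists of functions constant on each fiber of $\pi$. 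Step (ii) is the genuine analytic core: one produces, via finite-dimensional algebraic manipulation of the obstruction spaces, a finite set of holomorphic functions $f_1, \dots, f_N$ near $A$ whose common level sets are precisely the connected components of $A$ (on $A$) and which locally separate points and tangent vectors off $A$, and then invokes Remmert's proper mapping theorem together with the coherence of direct images to identify the image as an analytic set carrying the desired complex structure. The maximality hypothesis on $A$ is used precisely to guarantee that no additional compact analytic set obstructs this contraction procedure.
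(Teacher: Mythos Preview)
The paper does not prove this theorem at all: it is stated as a citation of Grauert's original result (\cite{G}, \S 2, Satz 5), placed in the subsection on exceptional sets alongside Laufer's lemma, both quoted without proof as ``well-known facts'' needed later. So there is no proof in the paper to compare your proposal against.

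That said, your sketch is a reasonable outline of the classical argument. A few remarks. In the ``only if'' direction, your treatment of maximality is slightly loose: single points of $U - A$ are trivially compact analytic subsets, so ``$A$ is the maximal compact analytic subset of $U$'' must be read (as Grauert does) as the maximal \emph{nowhere discrete} compact analytic subset, and your parenthetical about absorbing isolated points should be replaced by the observation that positive-dimensional components of any compact $B \subset U$ are forced into $A$. In the ``if'' direction your outline captures the main steps of Grauert's proof --- finiteness of cohomology on strongly pseudoconvex domains, lifting through powers of the ideal sheaf to produce separating functions, and then Remmert's proper mapping theorem to endow the quotient with a complex structure --- though of course each step hides substantial work that a full proof would have to carry out.
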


\begin{thm}\label{thm:laufer}
{(Laufer \cite{La}, Lemma 3.1)}
Let $\pi: U\rightarrow Y$ exhibit $A$ as exceptional set in $U$ with $Y$ a Stein space.
If $V\subset U$ with $V$ a holomorphically convex neighborhood of $A$ and $\mathcal{F}$
is a coherent analytic sheaf on $U$, then the restriction map $\rho: H^{i}(U,\mathcal{F}) \rightarrow H^{i}(V,\mathcal{F})$
is an isomorphism for $i\geq 1$.
\end{thm}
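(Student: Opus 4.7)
The plan is to run the Leray spectral sequence for two proper surjections onto Stein bases: the given $\pi\colon U \to Y$, and the Remmert reduction $\sigma\colon V \to \wt{V}$ of the holomorphically convex open set $V$. Both $H^{i}(U,\mathcal{F})$ and $H^{i}(V,\mathcal{F})$ will then be identified with direct sums of stalks of higher direct image sheaves at the (discrete) image of $A$, and finally one checks that the natural restriction map $\rho$ matches these identifications.

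First I would observe that since $\pi$ is biholomorphic away from $A$ and $\pi(A)$ is discrete in $Y$, the sheaves $R^{i}\pi_{*}\mathcal{F}$ for $i \geq 1$ are skyscraper sheaves supported on $\pi(A)$ and therefore acyclic on $Y$; meanwhile $\pi_{*}\mathcal{F}$ is coherent by Grauert's direct image theorem, hence acyclic on the Stein space $Y$ by Cartan's Theorem~B. The Leray spectral sequence degenerates at $E_{2}$ and yields
$$H^{i}(U,\mathcal{F}) \cong \Gamma(Y, R^{i}\pi_{*}\mathcal{F}) \cong \bigoplus_{y \in \pi(A)} (R^{i}\pi_{*}\mathcal{F})_{y}, \qquad i \geq 1.$$

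For $V$, note that every positive-dimensional compact analytic subset of $V$ is sent by $\pi$ to a compact analytic, hence discrete, subset of the Stein space $Y$ and therefore lies in $A$; so the Remmert reduction $\sigma\colon V \to \wt{V}$ exists, is proper, is biholomorphic off $A$, and $\sigma(A)$ is discrete in the Stein space $\wt{V}$. Repeating the Leray/Cartan~B argument verbatim gives
$$H^{i}(V,\mathcal{F}) \cong \bigoplus_{\wt{y} \in \sigma(A)} (R^{i}\sigma_{*}\mathcal{F})_{\wt{y}}, \qquad i \geq 1.$$

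The main obstacle is to identify these two stalk decompositions and to verify that $\rho$ is precisely the isomorphism between them. For a connected component $A_{y}$ of $A$ with $\pi(A_{y}) = \{y\}$ and $\sigma(A_{y}) = \{\wt{y}\}$, both stalks are direct limits of $H^{i}(\,\cdot\,,\mathcal{F})$ over fundamental systems of open neighborhoods of $A_{y}$. By properness of $\pi$ together with $A \subset V$, a nesting-down argument produces a Stein open neighborhood $W_{0}$ of $\pi(A)$ in $Y$ with $\pi^{-1}(W_{0}) \subset V$; the preimages $\pi^{-1}(W)$ for $W \subset W_{0}$ form a cofinal system of neighborhoods of $A_{y}$ that is simultaneously of the form $\sigma^{-1}(\wt{W})$, so the two direct limits coincide canonically. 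Running both Leray isomorphisms along the nested h.c.\ inclusions $\pi^{-1}(W_{0}) \subset V \subset U$ and invoking naturality of the edge map of the Leray spectral sequence with respect to open inclusions then forces $\rho$ to be exactly this canonical isomorphism. The delicate point is this compatibility check: one must verify that the restriction map fits into a commutative diagram with the two Leray edge identifications, which amounts to tracing through the functoriality of the spectral sequence on the nested open sets.
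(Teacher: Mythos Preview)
The paper does not give its own proof of this statement; it is quoted verbatim from Laufer \cite{La}, Lemma~3.1, and used as a black box in subsection~\ref{subsec:exceptional}. There is therefore nothing in the paper to compare your argument against.

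That said, your approach via the Leray spectral sequence for $\pi$ and for the Remmert reduction $\sigma$ of $V$ is the standard route and is sound. Two small points deserve to be made explicit. First, $A$ is compact by definition (see the paragraph preceding Theorem~\ref{thm:grauert}), so $\pi(A)$ is finite and one can indeed choose a Stein open $W_0\supset\pi(A)$ with $\pi^{-1}(W_0)\subset V$. Second, your assertion that the sets $\pi^{-1}(W)$ are simultaneously of the form $\sigma^{-1}(\wt W)$ amounts to saying they are $\sigma$-saturated; this holds because the condition $\pi_*\OO_U=\OO_Y$ forces the fibres $\pi^{-1}(y)$, $y\in\pi(A)$, to be connected, so they coincide with the connected components of $A$, which in turn are exactly the positive-dimensional fibres of $\sigma$. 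With these observations the cofinality argument goes through, and the compatibility of $\rho$ with the two edge isomorphisms is indeed straightforward functoriality of the Leray spectral sequence under open restriction. One can in fact shortcut the final step: since both restrictions $H^i(U,\mathcal F)\to H^i(\pi^{-1}(W_0),\mathcal F)$ and $H^i(V,\mathcal F)\to H^i(\sigma^{-1}(\wt W_1),\mathcal F)$ (for a Stein $\wt W_1\subset\sigma(\pi^{-1}(W_0))$ containing $\sigma(A)$) are isomorphisms by the skyscraper argument, sandwiching $V$ between $U$ and $\pi^{-1}(W_0)\supset\sigma^{-1}(\wt W_1)$ forces $\rho$ to be bijective without needing to trace the edge maps explicitly.
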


\smallskip
\subsection{$L^2$-$\dq_{min}$-Representation of $H^q_{cpt}(\Omega,\mathcal{K}_X^s)$}

The purpose of this subsection is to represent the cohomology with compact support of $\mathcal{K}_X^s$
in terms of $L^2$-$\dq_{min}$-cohomology groups.
This can be done on strongly pseudoconvex domains.

For this, we return to the situation of Subsection \ref{subsec:extension},
i.e. 
let $X$ be a Hermitian complex space of pure dimension $n$ with only isolated singularities
and $\Omega \subset\subset X$ a domain with
smooth strongly pseudoconvex boundary which does not intersect the singular set
and $\rho: U \rightarrow \R$ a smooth strictly plurisubharmonic defining function (for $\Omega$)
on a neighborhood $U \subset \Reg X$ of $b\Omega$. For $\epsilon>0$ small enough,
$$\Omega_\epsilon = \Omega \cup \{z\in U: \rho(z)<\epsilon\}.$$
Note that a neighborhood of $\o{\Omega_\epsilon - \Omega}$ does not contain singularities. We have:

\begin{lem}\label{lem:inclusion2}
Let $0 < q < n$. In the situation above, the natural injection
$$i_q: H^q(\Gamma_{cpt}(\Omega,\mathcal{F}^{n,*})) \rightarrow H^q(\Gamma_{cpt}(\Omega_\epsilon,\mathcal{F}^{n,*}))$$
is an isomorphism.
\end{lem}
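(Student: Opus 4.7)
The plan is to reduce the claim to a cohomology vanishing on the collar via the long exact sequence of sheaf cohomology with compact supports. Set $F := \Omega_\epsilon\setminus\Omega = \{0 \leq \rho < \epsilon\}$; since $b\Omega\cap\Sing X=\emptyset$, after possibly shrinking $\epsilon$ we may assume $F\subset\Reg X$, so $\mathcal{K}_X^s|_F$ coincides with the usual canonical sheaf $\mathcal{K}_X|_F$ on a complex manifold, and the fine resolutions $\mathcal{F}^{n,*}$ (Theorem \ref{thm:exactness2}) and $\mathcal{C}^{n,*}$ (Theorem \ref{thm:exactness1}) agree on $F$. The open-closed decomposition $\Omega_\epsilon=\Omega\sqcup F$ then yields the long exact sequence
\begin{equation*}
\cdots \to H^q_{cpt}(\Omega,\mathcal{K}_X^s) \overset{i_q}{\longrightarrow} H^q_{cpt}(\Omega_\epsilon,\mathcal{K}_X^s) \to H^q_{cpt}(F,\mathcal{K}_X) \to H^{q+1}_{cpt}(\Omega,\mathcal{K}_X^s) \to \cdots,
\end{equation*}
so that the lemma will follow once we show $H^r_{cpt}(F,\mathcal{K}_X)=0$ for all $0\leq r\leq n-1$.

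I would next convert this into an ordinary cohomology vanishing by applying Serre duality for the canonical sheaf on the smooth (non-compact) complex manifold underlying $F$: $H^r_{cpt}(F,\mathcal{K}_X)\cong H^{n-r}(F,\OO)^\ast$. It is thus enough to prove $H^s(F,\OO)=0$ for $1\leq s\leq n$, with the top degree $s=n$ being automatic on a non-compact complex $n$-manifold.

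The main obstacle is the vanishing $H^s(F,\OO)=0$ for $1\leq s<n$: a two-sided shell around a strongly pseudoconvex hypersurface is generally not Stein because of Hartogs-type extension from inside $\Omega$. What rescues the argument is that $F$ is only a \emph{one-sided} collar -- it contains the pseudoconvex boundary $b\Omega$ but is open at its outer end $b\Omega_\epsilon$. I would conclude by using the Grauert bumping of section \ref{subsec:extension} applied from the smooth pseudoconvex side, producing for each compact $K\subset F$ a strongly pseudoconvex (hence Stein) neighborhood of $K$ inside $\Omega_\epsilon\cap\Reg X$; passing to the limit yields the required vanishing, and combined with the long exact sequence above this gives that $i_q$ is an isomorphism exactly in the range $0<q<n$.
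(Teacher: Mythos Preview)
Your reduction via the long exact sequence for the open–closed pair $\Omega\subset\Omega_\epsilon$ is set up correctly, but the core computation on the collar $F=\{0\le\rho<\epsilon\}$ breaks down. The set $F$ is \emph{not} a complex manifold: it carries the real hypersurface $b\Omega=\{\rho=0\}$ as boundary. Serre duality in the form $H^r_{cpt}(F,\mathcal{K}_X)\cong H^{n-r}(F,\OO)^\ast$ is a statement about complex manifolds and does not apply to $F$; there is no ``complex manifold underlying $F$''. Even if one tries to interpret $H^s(F,\OO|_F)$ as a direct limit over open neighborhoods $W_\delta=\{-\delta<\rho<\epsilon\}$, these $W_\delta$ are genuine two–sided shells with (typically infinite–dimensional) $H^{n-1}(W_\delta,\OO)\neq 0$, and the restriction maps in the limit do not kill these classes. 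Your final paragraph does not help: a Stein neighborhood of a compact $K\subset F$ inside $\Omega_\epsilon$ is a neighborhood in the ambient manifold, not in $F$, and says nothing about the cohomology of $\OO|_F$ on $F$; the phrase ``passing to the limit'' has no content here. In fact, the vanishing $H^{n-1}_{cpt}(F,\mathcal{K}_X|_F)=0$ is, via the same long exact sequence, equivalent to the injectivity of $i_n$, which is not part of the lemma and would require a separate argument.

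The paper's proof avoids the collar entirely. It transports the problem to the resolution $\pi:M\to X$ via the exact sequences of Theorem~\ref{thm:main0}, obtaining a commutative diagram whose right column $\Gamma(\Omega,\mathcal{R}^q)\to\Gamma(\Omega_\epsilon,\mathcal{R}^q)$ is trivially an isomorphism (the $\mathcal{R}^q$ are skyscrapers on $\Sing X\subset\Omega$), and whose middle column $j_q$ is the extension map for $H^q_{cpt}$ of $\mathcal{K}_M\otimes\OO(|Z|-Z)$ between $\pi^{-1}(\Omega)$ and $\pi^{-1}(\Omega_\epsilon)$. That $j_q$ is an isomorphism follows by Serre duality (now legitimately, on the smooth $M$) from Laufer's invariance Theorem~\ref{thm:laufer}: the restriction $H^{n-q}(\pi^{-1}(\Omega_\epsilon),\OO(Z-|Z|))\to H^{n-q}(\pi^{-1}(\Omega),\OO(Z-|Z|))$ is an isomorphism for $n-q\ge 1$. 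The five–lemma style diagram chase then gives $i_q$ an isomorphism. The moral is that the ``collar'' information is already packaged in Laufer's theorem on the resolution; trying to prove it directly on $X$ runs into the boundary issues you encountered.
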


\begin{proof}
The mapping $i_q$ is induced by extending sections in $\Gamma_{cpt}(\Omega,\mathcal{F}^{n,*})$
trivially to sections in $\Gamma_{cpt}(\Omega_\epsilon,\mathcal{F}^{n,*})$.
Let $\pi: M\rightarrow X$ be a resolution of singularities as in Theorem \ref{thm:main0},
$$\mathcal{R}^q:=R^q \pi_* \big( \mathcal{K}_M \otimes \OO(|Z|-Z)\big),$$
$D:=\pi^{-1}(\Omega)$, $D_\epsilon:=\pi^{-1}(\Omega_\epsilon)$,
and consider the exact diagram
$$
\begin{xy}
  \xymatrix{
      0 \ar[r] & H^q(\Gamma_{cpt}(\Omega,\mathcal{F}^{n,*})) \ar[r]^{\pi^*} \ar[d]^{i_q}    
&   H^q(\Gamma_{cpt}(D,\mathcal{C}^{n,*}_\sigma(L_{|Z|-Z}))) \ar[r]^{\ \ \ \ \ \ \ \ \ \ s} \ar[d]^{j_q} & \Gamma(\Omega,\mathcal{R}^q) \ar[r] \ar[d]^{\cong}& 0  \\
      0 \ar[r] & H^q(\Gamma_{cpt}(\Omega_\epsilon,\mathcal{F}^{n,*})) \ar[r]^{\pi^*}             
&   H^q(\Gamma_{cpt}(D_\epsilon,\mathcal{C}^{n,*}_\sigma(L_{|Z|-Z}))) \ar[r]^{\ \ \ \ \ \ \ \ \ \ s}  & \Gamma(\Omega_\epsilon,\mathcal{R}^q) \ar[r] & 0.
  }
\end{xy}$$
This diagram is commutative because the maps $\pi^*$ are induced by pullback of forms under $\pi: M\rightarrow X$,
$i_q$ and $j_q$ are induced by the trivial extension of forms, 
and the maps $s$ are induced by the residue class maps $\Gamma(\Omega,\mathcal{K}^q) \rightarrow \Gamma(\Omega,\mathcal{R}^q)$
and $\Gamma(\Omega_\epsilon,\mathcal{K}^q) \rightarrow \Gamma(\Omega_\epsilon,\mathcal{R}^q)$, respectively.
The vertical arrow on the right-hand side of the diagramm is an isomorphism because there are no singularities
in a neighborhood of $\o{\Omega_\epsilon-\Omega}$.

But $j_q$ is also an isomorphism. This follows from Theorem \ref{thm:laufer} as follows.
The map
$$j_q: H^q(\Gamma_{cpt}(D,\mathcal{C}^{n,*}_\sigma(L_{|Z|-Z}))) \rightarrow H^q(\Gamma_{cpt}(D_\epsilon,\mathcal{C}^{n,*}_\sigma(L_{|Z|-Z})))$$
is dual to the natural restriction map
$$\rho_{n-q}: H^{n-q}(\Gamma(D_\epsilon, \mathcal{C}^{0,*}_\sigma(L_{Z-|Z|}))) \rightarrow H^{n-q}(\Gamma(D,\mathcal{C}^{0,*}_\sigma(L_{Z-|Z|}))).$$
Since $\mathcal{C}^{0,*}_\sigma(L_{Z-|Z|})$ is a fine resolution of $\OO(Z-|Z|)$,
we can apply Theorem \ref{thm:laufer} to $\OO(Z-|Z|)$ and conclude that $\rho_{n-q}$ is an isomorphism for $q <n$.
Since $D$ and $D_\epsilon$ are strongly pseudoconvex subsets of a complex manifolds,
we can apply Serre duality, and obtain that $j_q$ is also an isomorphism for $q<n$.
But then $i_q$ is an isomorphism as well by commutativity of the diagram.
\end{proof}

From this and Lemma \ref{lem:injective}, we deduce:

\begin{thm}\label{thm:l2representation}
Let $X$ be a Hermitian complex space of pure dimension $n$ with only isolated singularities
(s.t. there exists a representation as in Theorem \ref{thm:main1}, $\mathcal{K}_X^s=\pi_*\big( \mathcal{K}_M\otimes\OO(Z-|Z|)\big)$)
and $\Omega\subset\subset X$ a domain with strongly pseudoconvex boundary
which does not intersect the singular set, $b\Omega\cap \Sing X=\emptyset$. 

Let $0\leq q < n$.
Then the natural inclusion map
$$\iota: H^q(\Gamma_{cpt}(\Omega,\mathcal{F}^{n,*})) \rightarrow H^{n,q}_{min}(\Omega^*)$$
is an isomorphism.
\end{thm}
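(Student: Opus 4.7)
The plan is to reduce the statement to the bump $\Omega_\epsilon$ from subsection \ref{subsec:extension} and to exploit two non-trivial inputs: Lemma \ref{lem:inclusion2} on the compact-support side, and the injectivity of extension by zero on the $\dq_{min}$-side, the latter coming from Lemma \ref{lem:surjective} via $L^2$-Serre duality (Theorem \ref{thm:l2duality2}). Before that, one must check that $\iota$ is well defined: if $\omega\in \Gamma_{cpt}(\Omega,\mathcal{F}^{n,q})$ and $\{\omega_j\}$ is an approximating sequence for $\dq_s\omega$ consisting of smooth forms with compact support in $\Omega - \Sing X$, multiplying by a cut-off $\chi\in C^\infty_{cpt}(\Omega)$ with $\chi\equiv 1$ on $\supp\omega$ yields smooth forms $\chi\omega_j$ with compact support in $\Omega^*$ such that $\chi\omega_j\to\omega$ and $\dq(\chi\omega_j)\to \dq_s\omega$ in $L^2(\Omega^*)$, using $\dq\chi\wedge\omega=0$ and $\supp(\dq_s\omega)\subset\supp\omega\subset\{\chi=1\}$. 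Hence $\omega\in\Dom\dq_{min}(\Omega^*)$ with $\dq_{min}\omega=\dq_s\omega$.

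Next I would fit $\iota$ into the commutative diagram
$$
\begin{xy}
  \xymatrix{
      H^q(\Gamma_{cpt}(\Omega,\mathcal{F}^{n,*})) \ar[r]^{\iota} \ar[d]_{i_q} & H^{n,q}_{min}(\Omega^*) \ar[d]^{e} \\
      H^q(\Gamma_{cpt}(\Omega_\epsilon,\mathcal{F}^{n,*})) \ar[r]^{\iota_\epsilon} & H^{n,q}_{min}(\Omega_\epsilon^*)
  }
\end{xy}
$$
in which the vertical maps are extension by zero (well defined since $\overline\Omega\subset\subset\Omega_\epsilon$ and $\Sing X\cap\Omega_\epsilon=\Sing X\cap\Omega$) and $\iota_\epsilon$ is the analog of $\iota$ for $\Omega_\epsilon$; the square commutes because each composition is the zero extension of the same representative, viewed as an $L^2$-$\dq_{min}$ class on $\Omega_\epsilon^*$. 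For $0<q<n$, Lemma \ref{lem:inclusion2} tells us that $i_q$ is an isomorphism. The map $e$ is injective: its transpose under Theorem \ref{thm:l2duality2} is the restriction $r: H^{0,n-q}_{max}(\Omega_\epsilon^*)\to H^{0,n-q}_{max}(\Omega^*)$, which is surjective by Lemma \ref{lem:surjective} (since $n-q\geq 1$), and finite-dimensionality of the cohomology groups in play then forces $e$ to be injective. This is the mirror of Lemma \ref{lem:injective}.

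Both directions of the theorem are now formal. For injectivity: if $\iota[\omega]=0$ write $\omega=\dq_{min}h$ on $\Omega^*$; the zero extension $\tilde h$ of $h$ then belongs to $\Gamma_{cpt}(\Omega_\epsilon,\mathcal{F}^{n,q-1})$ (the approximating sequence for $h$ extends by zero and still has support disjoint from $\Sing X$) with $\dq_s\tilde h=\tilde\omega=i_q\omega$, so $i_q[\omega]=0$ and hence $[\omega]=0$. For surjectivity: given $[\omega]\in H^{n,q}_{min}(\Omega^*)$, the zero extension $\tilde\omega$ lies in $\Gamma_{cpt}(\Omega_\epsilon,\mathcal{F}^{n,q})$, so by Lemma \ref{lem:inclusion2} there is $[\omega']\in H^q(\Gamma_{cpt}(\Omega,\mathcal{F}^{n,*}))$ with $i_q[\omega']=[\tilde\omega]$; commutativity yields $e\iota[\omega']=[\tilde\omega]=e[\omega]$, and injectivity of $e$ gives $\iota[\omega']=[\omega]$. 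The case $q=0$ is trivial: on each connected component both sides vanish by the identity principle, since a compactly supported holomorphic $n$-form is zero and any $\dq_{min}$-closed holomorphic form on $\Omega^*$ extends by zero to a compactly supported holomorphic form on the larger bump.

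The main obstacle is the injectivity of $e$, which is not elementary and requires both the $L^2$-Serre duality of Theorem \ref{thm:l2duality2} (and thus the closed-range property coming from $b\Omega$ being strongly pseudoconvex and disjoint from $\Sing X$) and the Grauert-type bumping of Lemma \ref{lem:surjective}. A secondary subtlety is verifying that extension by zero from $\Omega^*$ to $\Omega_\epsilon^*$ simultaneously preserves membership in $\Dom\dq_{min}$ and places the extended form into $\Gamma_{cpt}(\Omega_\epsilon,\mathcal{F}^{n,*})$; this is precisely where the relative compactness $\overline\Omega\subset\subset\Omega_\epsilon$ and the fact that the bump adds only regular points are used.
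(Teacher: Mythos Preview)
Your proof is correct and follows essentially the same approach as the paper: both arguments hinge on Lemma \ref{lem:inclusion2} (the isomorphism $i_q$) and the injectivity of extension by zero on the $\dq_{min}$-side (Lemma \ref{lem:injective}, which you rederive from Lemma \ref{lem:surjective} via $L^2$-Serre duality). The only difference is organizational: you package the argument as a commutative square $(\iota,\iota_\epsilon,i_q,e)$, whereas the paper uses the chain $H^q(\Gamma_{cpt}(\Omega,\mathcal{F}^{n,*})) \overset{\iota}{\to} H^{n,q}_{min}(\Omega^*) \overset{j}{\to} H^q(\Gamma_{cpt}(\Omega_\epsilon,\mathcal{F}^{n,*})) \overset{k}{\to} H^{n,q}_{min}(\Omega_\epsilon^*)$ and observes that $j\circ\iota=i_q$ is an isomorphism while $k\circ j$ is injective, forcing $j$ and hence $\iota$ to be isomorphisms.
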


\begin{proof}
Let $\Omega_\epsilon$ be a strongly pseudoconvex neighborhood of $\Omega$ as in Lemma \ref{lem:injective}
and Lemma \ref{lem:inclusion2}. 
If $q \geq 1$,
we consider the sequence of trivial inclusions
\begin{eqnarray*}
H^q(\Gamma_{cpt}(\Omega,\mathcal{F}^{n,*})) \overset{\iota}{\rightarrow} H^{n,q}_{min}(\Omega^*)
\overset{j}{\rightarrow} H^q(\Gamma_{cpt}(\Omega_\epsilon,\mathcal{F}^{n,*}))
\overset{k}{\rightarrow} H^{n,q}_{min}(\Omega_\epsilon^*).
\end{eqnarray*}
Now then, $j\circ \iota$ is an isomorphism as it is just the map $i_q$ from Lemma \ref{lem:inclusion2}.
Hence, $\iota$ must be injective and $j$ is surjective.

On the other hand, $k\circ j$ is injective as it is just the map $i$ from Lemma \ref{lem:injective}.
Hence, $j$ is also injective.

So, the maps $j$ and $j\circ \iota$ both are isomorphisms. This shows that $\iota$ is an isomorphism, as well.

It remains to treat the case $q=0$. But this is trivial for the groups under consideration both vanish.
Let $\pi: M\rightarrow X$ be a resolution of singularities as in Theorem \ref{thm:Ks}.
Consider a form $\phi \in \Gamma_{cpt}(\Omega,\mathcal{F}^{n,0})$.
Then
$$\pi^*\phi \in \Gamma_{cpt}\big(\pi^{-1}(\Omega), \mathcal{K}_M\otimes\OO(|Z|-Z)\big) = 0$$
since holomorphic $n$-forms with compact support in a non-compact manifold must vanish.
On the other hand, let $\phi\in H^{n,0}_{min}(\Omega^*)$.
Then $\phi$ can be trivially extended to a $\dq_s$-closed form $\wt{\phi}$ with compact support,
$$\wt{\phi} \in \Gamma_{cpt}(\Omega_\epsilon,\mathcal{F}^{n,0}),$$
so that $\phi=0$ as before.
\end{proof}

\subsection{$L^2$-version of Theorem \ref{thm:main1}}

A slight modification of the proof of Theorem \ref{thm:l2representation}
also gives an $L^2$-version of Theorem \ref{thm:main1}.
For this, we also need the well-known:

\begin{lem}\label{lem:l2inclusion3}
Let $(M,\sigma)$ be a Hermitian complex manifold of dimension $n$ and let $G\subset\subset M$ be a strongly pseudoconvex
smoothly bounded domain. Assume that $Z$ is a divisor with support in $G$ and denote by $L_{|Z|-Z}$
a Hermitian holomorphic line bundle as in Section \ref{sec:main1} (the proof of Theorem \ref{thm:main1}).
Then the natural inclusion (trivial extension of forms) induces a natural isomorphism
$$\iota_{cpt}: H^q(\Gamma_{cpt}(G,\mathcal{C}^{n,*}_\sigma(L_{|Z|-Z}))) \rightarrow H^{n,q}_{min}(G,L_{|Z|-Z}))$$
for all $0\leq q<n$.
\end{lem}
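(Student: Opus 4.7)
The plan is to run the argument of Theorem \ref{thm:l2representation} in the smooth setting on $M$, with $L_{|Z|-Z}$ playing the role of the coefficients. First, via Grauert's bumping method (as in subsection \ref{subsec:extension}, now in the much simpler situation of a complex manifold without singular points) I produce a slightly enlarged strongly pseudoconvex smoothly bounded domain $G_\epsilon$ with $G\subset\subset G_\epsilon\subset\subset M$. Since the bumps only fatten $G$ near $bG$, where no compact analytic subset can sit, $G$ and $G_\epsilon$ share the same maximal compact analytic subset $A\supset |Z|$. By Theorem \ref{thm:grauert}, $A$ is exceptional in $G_\epsilon$, with $G$ a holomorphically convex neighborhood of $A$.

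The second step is the analogue of Lemma \ref{lem:inclusion2}: the trivial extension
\begin{eqnarray*}
j_q: H^q(\Gamma_{cpt}(G,\mathcal{C}^{n,*}_\sigma(L_{|Z|-Z}))) \longrightarrow H^q(\Gamma_{cpt}(G_\epsilon,\mathcal{C}^{n,*}_\sigma(L_{|Z|-Z})))
\end{eqnarray*}
is an isomorphism for $0\leq q < n$. For $q\geq 1$, Serre duality identifies $j_q$ as the dual of the restriction
\begin{eqnarray*}
\rho_{n-q}: H^{n-q}(G_\epsilon,\OO(Z-|Z|)) \longrightarrow H^{n-q}(G,\OO(Z-|Z|));
\end{eqnarray*}
since $\OO(Z-|Z|)$ is coherent and $G$ is a holomorphically convex neighborhood of the exceptional set $A$ in $G_\epsilon$, Theorem \ref{thm:laufer} gives that $\rho_{n-q}$ is an isomorphism. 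The case $q=0$ is trivial, since both source and target vanish by the identity principle for holomorphic sections of $L_{|Z|-Z}$ on $G$ or $G_\epsilon$ that vanish near the boundary.

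The third step is the analogue of Lemma \ref{lem:injective}: trivial extension induces an injective map
\begin{eqnarray*}
k_q: H^{n,q}_{min}(G,L_{|Z|-Z}) \longrightarrow H^{n,q}_{min}(G_\epsilon,L_{|Z|-Z}),\quad 1\leq q\leq n.
\end{eqnarray*}
Grauert bumping as in Lemma \ref{lem:surjective}, now applied on the regular manifold $M$ with coefficients in $L_{Z-|Z|}$, produces a surjection $H^{0,n-q}_{max}(G_\epsilon,L_{Z-|Z|}) \longrightarrow H^{0,n-q}_{max}(G,L_{Z-|Z|})$. The $L^2$-Serre duality of Theorem \ref{thm:l2duality1} --- whose closed-range hypotheses are classical on strongly pseudoconvex smoothly bounded domains in a Hermitian complex manifold --- then dualizes this surjectivity into the required injectivity of $k_q$.

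Finally, I assemble the chain
\begin{eqnarray*}
H^q(\Gamma_{cpt}(G,\mathcal{C}^{n,*}_\sigma(L_{|Z|-Z}))) \overset{\iota_{cpt}}{\longrightarrow} H^{n,q}_{min}(G,L_{|Z|-Z}) \overset{\ell}{\longrightarrow} H^q(\Gamma_{cpt}(G_\epsilon,\mathcal{C}^{n,*}_\sigma(L_{|Z|-Z}))) \overset{\iota^{\epsilon}_{cpt}}{\longrightarrow} H^{n,q}_{min}(G_\epsilon,L_{|Z|-Z}),
\end{eqnarray*}
all arrows being inclusions or trivial extensions of forms. By construction $\ell\circ\iota_{cpt}=j_q$ is an isomorphism by Step 2 and $\iota^{\epsilon}_{cpt}\circ \ell=k_q$ is injective by Step 3, so $\ell$ is both surjective and injective, and consequently $\iota_{cpt}$ is an isomorphism, as claimed. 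I expect the main technical point to be verifying that the bumping $G \leadsto G_\epsilon$ genuinely preserves the maximal compact analytic subset (so that Laufer's theorem applies with the right data) and that Serre duality intertwines trivial extension of $(n,q)$-forms with restriction of $\OO(Z-|Z|)$-cohomology in a functorial way; both points are implicit in subsection \ref{subsec:extension} but merit an explicit check.
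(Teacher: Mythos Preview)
Your argument is correct and self-contained, but it takes a different route from the paper. The paper's proof is a two-line reduction: by Serre duality on $G$ and $L^2$-Serre duality (Theorem~\ref{thm:l2duality1} with line bundle coefficients), the statement is equivalent to the natural map
\[
H^{0,n-q}_{max}(G,L_{Z-|Z|}) \longrightarrow H^{n-q}\big(\Gamma(G,\mathcal{C}^{0,*}_\sigma(L_{Z-|Z|}))\big)
\]
being an isomorphism, and this is simply the well-known coincidence of $L^2$- and $L^2_{loc}$-Dolbeault cohomology on a strongly pseudoconvex smoothly bounded domain (the paper notes in a footnote that this in turn follows from Lemma~\ref{lem:surjective} together with Theorem~\ref{thm:laufer}). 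You instead stay on the $(n,q)$ side and rerun the three-step machine of Theorem~\ref{thm:l2representation} (bump $G\leadsto G_\epsilon$, Laufer for $j_q$, $L^2$-duality plus bumping for injectivity of $k_q$, then the chain argument). The ingredients are the same --- Grauert bumping, Laufer's restriction isomorphism, and $L^2$-Serre duality --- but arranged dually: the paper dualizes once and invokes a known fact, while you unfold that fact explicitly on the primal side. Your version is longer but has the virtue of not appealing to an unproved ``well-known'' statement; the paper's version is cleaner and makes the conceptual content (namely $L^2=L^2_{loc}$ on strongly pseudoconvex domains) transparent. One small point: your Step~2 invokes Theorem~\ref{thm:laufer} with $A$ the maximal compact analytic subset of $G_\epsilon$; if $A$ happens to be empty (so $G_\epsilon$ is Stein), Laufer does not literally apply, but then both sides of $\rho_{n-q}$ vanish by Cartan~B and the conclusion is immediate.
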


\begin{proof}
By use of Serre duality and the $L^2$-duality Theorem \ref{thm:l2duality1} for forms with values in Hermitian line
bundles (see \cite{eins}, Theorem 2.3), it is enough to prove that the natural map
\begin{eqnarray}\label{eq:l2l2loc}
H^{0,n-q}_{max}(G,L_{Z-|Z|}) \rightarrow H^{n-q}(\Gamma(G,\mathcal{C}^{0,*}_\sigma(L_{Z-|Z|})))
\end{eqnarray}
is an isomorphism.
But this is the well-known equivalence of $L^2$- and $L^2_{loc}$-cohomology on strongly pseudoconvex domains.\footnote{
One can prove the isomorphy \eqref{eq:l2l2loc} by combining Lemma \ref{lem:surjective} and Theorem \ref{thm:laufer}.}
The map \eqref{eq:l2l2loc} is induced by the natural inclusion $L^{0,n-q}_\sigma(G,L_{Z-|Z|}) \rightarrow \mathcal{L}^{0,n-q}_\sigma(G,L_{Z-|Z|})$
of $L^2$- into $L^2_{loc}$-forms on $G$, and $(\mathcal{C}^{0,*}_\sigma(L_{Z-|Z|}),\dq_w)$ is the usual fine $L^2_{loc}$-resolution of $\OO(Z-|Z|)$. 
\end{proof}

Combining Theorem \ref{thm:l2representation} with Lemma \ref{lem:l2inclusion3}, we obtain the following 
$L^2$-version of Theorem \ref{thm:main1}:

\begin{thm}\label{thm:main15}
In the situation of Theorem \ref{thm:main1}, let $\Omega\subset\subset X$ be a domain with
strongly pseudoconvex smooth boundary that does not intersect the singular set, $b\Omega\cap\Sing X=\emptyset$.
Let $0\leq p < n$, $\wt{\Omega}:=\pi^{-1}(\Omega)$ and $\Omega^*=\Omega-\Sing X$.

Then the pull-back of forms $\pi^*$ induces a natural injective homomorphism
\begin{eqnarray*}
h_p: H^{n,p}_{min}(\Omega^*) \longrightarrow H^{n,p}_{min}(\wt{\Omega},L_{|Z|-Z})
\end{eqnarray*}
with $\coker h_p = \Gamma(\Omega,R^p\pi_* (\mathcal{K}_M\otimes\OO(|Z|-Z))$ if $p\geq 1$,
and $h_0$ is an isomorphism.
\end{thm}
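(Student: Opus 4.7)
The plan is to obtain $h_p$ as a composition of three natural identifications, each of which is already packaged as an earlier result in the paper, so the work consists mostly of checking their compatibility under pull-back.

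First, since $b\Omega$ is smooth strongly pseudoconvex and does not meet $\Sing X$, the hypotheses of Theorem \ref{thm:l2representation} are satisfied, which supplies a natural isomorphism
\begin{eqnarray*}
\iota: H^p(\Gamma_{cpt}(\Omega,\mathcal{F}^{n,*})) \overset{\cong}{\longrightarrow} H^{n,p}_{min}(\Omega^*)
\end{eqnarray*}
for each $0\leq p<n$. On the resolution side, $\pi$ is biholomorphic over a neighbourhood of $b\Omega$, so $\wt{\Omega}=\pi^{-1}(\Omega)$ is a smoothly bounded strongly pseudoconvex domain in $M$, and the chosen regular metric is equivalent to $\pi^*h$ near $b\wt{\Omega}$. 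Hence Lemma \ref{lem:l2inclusion3} applies and yields a natural isomorphism
\begin{eqnarray*}
\iota_{cpt}: H^p(\Gamma_{cpt}(\wt{\Omega},\mathcal{C}^{n,*}_\sigma(L_{|Z|-Z}))) \overset{\cong}{\longrightarrow} H^{n,p}_{min}(\wt{\Omega},L_{|Z|-Z})
\end{eqnarray*}
for the same range of $p$, realised by trivial extension of forms.

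Second, to connect the two compactly-supported sheaf cohomologies, I would rerun the argument of section \ref{sec:main1} on the open set $\Omega$ in place of $X$. Concretely, Theorem \ref{thm:complexA2} is applied to the morphism of complexes furnished by Lemma \ref{lem:inclusion}, but with $\Gamma_{cpt}$ in place of $\Gamma$, exactly as in the compact-support half of Theorem \ref{thm:main0}. The higher direct image sheaves $\mathcal{R}^p = R^p\pi_*(\mathcal{K}_M\otimes\OO(|Z|-Z))$ are skyscraper sheaves on $\Sing X$ (so acyclic), and the surjectivity $\Gamma(\Omega,\ker b_p)\twoheadrightarrow\Gamma(\Omega,\mathcal{R}^p)$ is proved exactly as in section \ref{sec:main1} using Takegoshi's theorem together with cut-off functions supported near the finitely many components of $\pi^{-1}(\Omega\cap\Sing X)$ that lie in $\Omega$. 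This delivers a natural injective homomorphism
\begin{eqnarray*}
[\pi^*_p]: H^p(\Gamma_{cpt}(\Omega,\mathcal{F}^{n,*})) \longrightarrow H^p(\Gamma_{cpt}(\wt{\Omega},\mathcal{C}^{n,*}_\sigma(L_{|Z|-Z})))
\end{eqnarray*}
whose cokernel is $\Gamma(\Omega,\mathcal{R}^p)$ for $p\geq1$, and which is an isomorphism for $p=0$. Setting $h_p := \iota_{cpt}\circ[\pi^*_p]\circ\iota^{-1}$ produces the desired map with the asserted cokernel.

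The main obstacle is verifying that this $h_p$ is indeed induced by the $L^2$ pull-back of forms, i.e.\ that the two compact-support identifications $\iota$ and $\iota_{cpt}$ intertwine the sheaf-level inclusion $[\pi^*_p]$ with the map on $L^2$-Dolbeault cohomology given literally by $[\omega]\mapsto[\pi^*\omega]$. This rests on two facts established earlier: the isometry \eqref{eq:l2est5} which identifies $L^{n,*}(\Omega^*)$ isometrically with $L^{n,*}_\gamma(\wt{\Omega})$ via trivial extension across $E$, and Lemma \ref{lem:inclusion} together with the inclusions \eqref{eq:inc1}–\eqref{eq:inc2}, which show that elements of $\mathcal{F}^{n,p}_{\gamma,E}$ automatically lie in $\mathcal{C}^{n,p}_\sigma(L_{|Z|-Z})$ so that $\pi^*$ is at once a morphism of sheaves and of $L^2$-spaces. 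Once these compatibilities are recorded, the commutative square they produce shows that $\iota_{cpt}\circ[\pi^*_p]$ equals the map induced by $\pi^*$ followed by $\iota$, finishing the proof.
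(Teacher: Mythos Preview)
Your proposal is correct and follows essentially the same approach as the paper: it assembles $h_p$ from the commutative square with $\iota$ (Theorem~\ref{thm:l2representation}), $\iota_{cpt}$ (Lemma~\ref{lem:l2inclusion3}), and $[\pi^*_p]$ (Theorem~\ref{thm:main0} applied over $\Omega$), then reads off injectivity and the cokernel from $[\pi^*_p]$. The paper simply cites Theorem~\ref{thm:main0} directly rather than rerunning the argument of section~\ref{sec:main1}, and handles the commutativity in one line by noting that all maps are induced by natural inclusion or pull-back; otherwise the proofs are the same.
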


\begin{proof}
Consider the diagram of natural mappings
$$
\begin{xy}
  \xymatrix{
H^p(\Gamma_{cpt}(\Omega,\mathcal{F}^{n,*})) \ar[r]^{[\pi^*_p]} \ar[d]^{\iota} & H^p(\Gamma_{cpt}(\wt{\Omega},\mathcal{C}^{n,*}_\sigma(L_{|Z|-Z}))) \ar[d]^{\iota_{cpt}}\\
H^{n,p}_{min}(\Omega^*) \ar[r]^{h_p} & H^{n,p}_{min}(\wt{\Omega},L_{|Z|-Z}),}
\end{xy}$$
where $[\pi^*_p]$ is the map from Theorem \ref{thm:main0},
$\iota$ is the map from Theorem \ref{thm:l2representation},
and $\iota_{cpt}$ is the map from Lemma \ref{lem:l2inclusion3}.
Since all these maps are induced by natural inclusion 
or pull-back of forms, the diagram is commutative.
But $\iota$ and $\iota_{cpt}$ are isomorphisms and $[\pi_p^*]$ is injective
with $\coker [\pi_p^*] = \Gamma(\Omega,R^p\pi_* (\mathcal{K}_M\otimes\OO(|Z|-Z))$ if $p\geq 1$
and an isomorphism for $p=0$.
That implies the assertion.
\end{proof}

The situation is much easier on compact complex spaces (where we can also include the case $p=n$):

\begin{thm}\label{thm:mainc}
In the situation of Theorem \ref{thm:main1}, assume that $X$ is compact.
Let $0\leq p\leq n$.
Then the pull-back of forms $\pi^*$ induces a natural injective homomorphism
\begin{eqnarray*}
h_p: H^{n,p}_{min}(X-\Sing X) \longrightarrow H^{n,p}(M,L_{|Z|-Z})
\end{eqnarray*}
with $\coker h_p = \Gamma(X,R^p\pi_* (\mathcal{K}_M\otimes\OO(|Z|-Z))$ if $p\geq 1$, and $h_0$ is an isomorphism.
\end{thm}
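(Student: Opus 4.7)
My plan is to deduce Theorem \ref{thm:mainc} directly from Theorem \ref{thm:main0}, which already produces, for all $0\leq p\leq n$, natural injective maps
\begin{eqnarray*}
[\pi^*_p]: H^p\big(\Gamma_{cpt}(X,\mathcal{F}^{n,*})\big) \longrightarrow H^p\big(\Gamma_{cpt}(M,\mathcal{C}^{n,*}_\sigma(L_{|Z|-Z}))\big)
\end{eqnarray*}
with cokernel $\Gamma(X,\mathcal{R}^p)$ for $p\geq 1$ and an isomorphism for $p=0$. The only work left is therefore to identify these two cohomology groups with $H^{n,p}_{min}(X-\Sing X)$ and $H^{n,p}(M,L_{|Z|-Z})$, respectively. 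Compactness of $X$ (and hence of $M$, since $\pi$ is proper) trivializes several issues that made Theorem \ref{thm:main15} more delicate, most notably it allows us to replace $\Gamma_{cpt}$ by $\Gamma$ on both sides and to include the case $p=n$.

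For the $X$-side, I would first observe that compactness of $X$ implies $L^{n,q}_{loc}(X)=L^{n,q}(X^*)$ with $X^*=X-\Sing X$, because in the defining condition one may take $K=X$. Next, in the definition of $\dq_s(X)$, the requirement $\supp f_j\cap\Sing X=\emptyset$ forces $\supp f_j$ to be a closed subset of the compact space $X$ contained in $X^*$, hence compact in $X^*$. After the usual smoothing by cut-offs and Dirac sequences (as already noted in the paper's discussion of $\dq_s$), the approximating sequences are exactly sequences of smooth forms with compact support in $X^*$ whose images under $\dq$ converge in $L^2$. This is verbatim the definition of $\Dom\dq_{min}$ on $X^*$, and the converse inclusion is obvious. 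Thus the complexes $(\Gamma(X,\mathcal{F}^{n,*}),\dq_s)$ and $(L^{n,*}(X^*)\cap\Dom\dq_{min},\dq_{min})$ coincide, giving a natural identification $H^p(\Gamma(X,\mathcal{F}^{n,*}))\cong H^{n,p}_{min}(X^*)$ for all $0\leq p\leq n$.

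For the $M$-side, $M$ is a compact Hermitian manifold without boundary, so $\dq_{max}=\dq_{min}$ on $L^2$-forms with values in the Hermitian line bundle $L_{|Z|-Z}$, and $L^2_{loc}=L^2$. Since $(\mathcal{C}^{n,*}_\sigma(L_{|Z|-Z}),\dq_w)$ is the fine $L^2_{loc}$-Dolbeault resolution of $\mathcal{K}_M\otimes\OO(|Z|-Z)$, standard Hodge theory on the compact manifold $M$ (or the closed-range property of $\dq$) identifies $H^p(\Gamma(M,\mathcal{C}^{n,*}_\sigma(L_{|Z|-Z})))$ with $H^{n,p}(M,L_{|Z|-Z})$. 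Inserting both identifications into Theorem \ref{thm:main0} and using that these identifications are induced by natural inclusion/pull-back of forms (so that the diagram of $\pi^*$'s commutes) yields $h_p$ with the required properties. The mildly delicate point is the equality $\Dom\dq_s(X)=\Dom\dq_{min}(X^*)$ on compact $X$; this is the compact-case analogue of the bumping-plus-duality argument used for Theorem \ref{thm:l2representation}, but here it reduces to a direct check rather than the Grauert bump procedure, which is why the case $p=n$ (excluded in Theorem \ref{thm:l2representation}) now presents no obstacle.
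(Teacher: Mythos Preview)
Your proposal is correct and follows essentially the same route as the paper's own proof: deduce the statement directly from Theorem \ref{thm:main0} by using compactness of $X$ (and hence of $M$) to identify $H^p(\Gamma_{cpt}(X,\mathcal{F}^{n,*}))$ with $H^{n,p}_{min}(X-\Sing X)$ and $H^p(\Gamma_{cpt}(M,\mathcal{C}^{n,*}_\sigma(L_{|Z|-Z})))$ with $H^{n,p}(M,L_{|Z|-Z})$. The paper simply asserts these two identifications without further comment, whereas you spell out why $\Dom\dq_s(X)=\Dom\dq_{min}(X^*)$ and why the $L^2_{loc}$- and $L^2$-Dolbeault complexes on the compact manifold $M$ agree; this added detail is fine but not a different argument.
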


\begin{proof}
Since $X$ is compact, the statement follows directly from Theorem \ref{thm:main0}
because then
\begin{eqnarray*}
H^p(\Gamma_{cpt}(X,\mathcal{F}^{n,*})) &=& H^{n,p}_{min}(X-\Sing X),\\
H^p(\Gamma_{cpt}(M,\mathcal{C}^{n,*}_\sigma(L_{|Z|-Z}))) &=& H^{n,p}(M,L_{|Z|-Z}).
\end{eqnarray*}
for all $0\leq p\leq n$.
\end{proof}

\smallskip
\subsection{Mayer-Vietoris for the $\dq_{min}$-complex}

We will now remove the assumption that $\Omega$ has strongly pseudoconvex boundary from Theorem \ref{thm:main15}
by use of the Mayer-Vietoris sequence for the $\dq_{min}$-complex.

Let $N$ be a Hermitian complex manifold. For an open set $V\subset N$, let
$$C^{p,q}_{min}(V) := L^{p,q}(V) \cap \Dom\dq_{min}$$
be the square-integrable $(p,q)$-forms in the domain of $\dq_{min}$.
We denote by $C^{p,*}_{min}(V)$ the $\dq_{min}$-complex of $(p,q)$-forms
$$C^{p,0}_{min}(V) \overset{\dq_{min}}{\longrightarrow} C^{p,1}_{min}(V) \overset{\dq_{min}}{\longrightarrow}
C^{p,2}_{min}(V) \overset{\dq_{min}}{\longrightarrow} ... $$
Let $U,V$ be two open sets in $N$ with non-empty intersection.
For any $p$, $q$, let 
\begin{eqnarray*}
i_U: && C^{p,q}_{min}(U\cap V) \rightarrow C^{p,q}_{min}(U),\\
i_V: && C^{p,q}_{min}(U\cap V) \rightarrow C^{p,q}_{min}(V),\\
j_U: && C^{p,q}_{min}(U) \rightarrow C^{p,q}_{min}(U\cup V),\\
j_V: && C^{p,q}_{min}(V) \rightarrow C^{p,q}_{min}(U\cup V)
\end{eqnarray*}
be the trivial extension of $(p,q)$-forms.
Then it is clear that the natural sequence of complexes
\begin{eqnarray}\label{eq:soc}
0\rightarrow C^{p,*}_{min}(U\cap V) \overset{(i_U,i_V)}{\longrightarrow}
C^{p,*}_{min}(U)\oplus C^{p,*}_{min}(V) \overset{j_U-j_V}{\longrightarrow}
C^{p,*}_{min}(U\cup V) \rightarrow 0
\end{eqnarray}
is exact on the left and in the middle.
Assume that \eqref{eq:soc} is also exact on the right.
Then the short exact sequence \eqref{eq:soc} gives rise as usually
to the long exact cohomology sequence
\begin{eqnarray*}
0\longrightarrow && H^{p,0}_{min}(U\cap V) \overset{(i_U,i_V)}{\longrightarrow}
H^{p,0}_{min}(U) \oplus H^{p,0}_{min}(V) \overset{j_U-j_V}{\longrightarrow}
H^{p,0}_{min}(U\cup V) \overset{\delta_0}{\longrightarrow} \\
... \overset{\delta_{q-1}}{\longrightarrow} &&H^{p,q}_{min}(U\cap V) \overset{(i_U,i_V)}{\longrightarrow}
H^{p,q}_{min}(U) \oplus H^{p,q}_{min}(V) \overset{j_U-j_V}{\longrightarrow}
H^{p,q}_{min}(U\cup V) \overset{\delta_q}{\longrightarrow}\\
\overset{\delta_q}{\longrightarrow} &&H^{p,q+1}_{min}(U\cap V) \longrightarrow ...
\end{eqnarray*}

We can use this kind of Mayer-Vietoris sequence to prove the following generalized version
of Theorem \ref{thm:main15}:

\begin{thm}\label{thm:main19}
Let $(X,h)$ be a Hermitian complex space of pure dimension $n\geq 2$
with only isolated singularities and $\pi: M\rightarrow X$ a resolution of singularities
with only normal crossings such that $\mathcal{K}_X^s=\pi_*\big( \mathcal{K}_M\otimes \OO(|Z|-Z)\big)$.

Let $0\leq p < n$, $\Omega\subset\subset X$ a relatively compact domain,
$\wt{\Omega}:=\pi^{-1}(\Omega)$, $\gamma=\pi^* h$ and $\Omega^*=\Omega-\Sing X$.
Provide $\wt{\Omega}$ with a (regular) Hermitian metric which is equivalent to $\gamma$
close to the boundary $b\wt{\Omega}$.

Then the pull-back of forms $\pi^*$ induces a natural injective homomorphism
\begin{eqnarray*}
h_p: H^{n,p}_{min}(\Omega^*) \longrightarrow H^{n,p}_{min}(\wt{\Omega},L_{|Z|-Z})
\end{eqnarray*}
with $\coker h_p = \Gamma(\Omega,R^p\pi_* (\mathcal{K}_M\otimes\OO(|Z|-Z))$ if $p\geq 1$,
and $h_0$ is an isomorphism.
\end{thm}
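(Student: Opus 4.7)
The plan is to reduce the general case to the strongly pseudoconvex situation already treated in Theorem \ref{thm:main15}, via a Mayer--Vietoris decomposition of $\Omega$ adapted to its singular set. Since $\overline{\Omega}$ is compact and $\Sing X$ is discrete, the set $\Omega \cap \Sing X$ is finite, say $\{p_1, \ldots, p_N\}$. Using that isolated singular points admit arbitrarily small strongly pseudoconvex neighborhoods (cf.\ Theorem \ref{thm:grauert}), I would choose concentric strongly pseudoconvex neighborhoods $W_i \subset\subset U_i \subset\subset \Omega$ of each $p_i$ with smooth boundary, arranged so that $\overline{U_i} \cap \Sing X = \{p_i\}$. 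Setting $\Omega_1 := \bigsqcup_i U_i$ and $\Omega_2 := \Omega \setminus \bigsqcup_i \overline{W_i}$ yields an open cover $\Omega^* = \Omega_1^* \cup \Omega_2$ such that both $\Omega_2$ and $\Omega_1^* \cap \Omega_2$ are disjoint from $\Sing X$, while $b\Omega_1$ is smooth strongly pseudoconvex and disjoint from $\Sing X$. Theorem \ref{thm:main15} then applies to each component $U_i$ of $\Omega_1$, and by additivity to $\Omega_1$ itself.

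Next I would invoke the Mayer--Vietoris long exact sequence for the $\dq_{min}$-complex coming from \eqref{eq:soc}, both downstairs for $(n,*)$-forms on $\Omega^*$ and upstairs for $(n,*)$-forms with values in $L_{|Z|-Z}$ on $\wt{\Omega} = \wt{\Omega_1} \cup \wt{\Omega_2}$. Right-exactness of \eqref{eq:soc} in our setting is obtained from a partition of unity $\{\chi_U, \chi_V\}$ subordinate to $\{U, V\}$ with $\supp \chi_U$ closed in $U \cup V$ and contained in $U$: multiplication by $\chi_U$ (bounded with bounded differential) preserves $\Dom \dq_{min}$, so every $f \in C^{n,q}_{min}(U \cup V)$ decomposes as $j_U(\chi_U f) + j_V(\chi_V f)$. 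I then compare the two Mayer--Vietoris sequences via the chain map induced by $\pi^*$. On $\Omega_2$ and $\Omega_1^* \cap \Omega_2$ the map $\pi$ is biholomorphic and the divisor $|Z|-Z$ is absent, so $\pi^*$ induces isomorphisms on the corresponding $\dq_{min}$-cohomologies. On $\Omega_1^*$, Theorem \ref{thm:main15} gives injectivity with cokernel $\Gamma(\Omega_1, \mathcal{R}^p)$ for $p \ge 1$ (isomorphism for $p=0$), where $\mathcal{R}^p := R^p \pi_* (\mathcal{K}_M \otimes \OO(|Z|-Z))$.

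The conclusion then follows from a diagram chase. Injectivity of $h_p$ is obtained by the 5-lemma in the ladder of long exact sequences, using that the vertical maps on $\Omega_2$ and on $\Omega_1^* \cap \Omega_2$ are isomorphisms and that on $\Omega_1^*$ is injective. For the cokernel, $\mathcal{R}^p$ is a skyscraper sheaf supported on $\Omega \cap \Sing X \subset \Omega_1$, hence $\Gamma(\Omega_1, \mathcal{R}^p) = \Gamma(\Omega, \mathcal{R}^p)$. Applying the snake lemma to the morphism of Mayer--Vietoris sequences --- equivalently, observing that the cokernels of the vertical maps on $\Omega_2$ and $\Omega_1^* \cap \Omega_2$ vanish --- identifies $\coker h_p$ with $\Gamma(\Omega, \mathcal{R}^p)$ for $p \ge 1$, and gives that $h_0$ is an isomorphism.

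The main obstacle will be the verification of right-exactness of the Mayer--Vietoris sequence \eqref{eq:soc} for $\dq_{min}$ on the incomplete Hermitian manifold $\Omega^*$ (and on $\wt{\Omega}$ with the chosen regular metric, which is only assumed equivalent to the degenerate $\pi^*h$ near $b\wt{\Omega}$). The key point is to check that multiplication by a smooth bounded function with bounded differential preserves the closure of $\dq_{cpt}$, which follows from a standard approximation argument but needs to be stated carefully. The subtlety about the metric on $\wt{\Omega}$ is harmless for $(n,*)$-forms since the relevant $L^2$ spaces agree locally by \eqref{eq:l2est3}, and the \emph{min} condition only sees the metric near $b\wt{\Omega}$ where the two metrics are equivalent.
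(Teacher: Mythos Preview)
Your proposal is correct and follows essentially the same route as the paper's proof. The paper also covers $\Omega$ by a strongly pseudoconvex neighborhood $V$ of the singular set (your $\Omega_1$) and its complement $U=\Omega\setminus\overline{W}$ (your $\Omega_2$), establishes right-exactness of the $\dq_{min}$ Mayer--Vietoris sequence via a single cut-off $\chi$ with $\supp\chi\subset\subset V$ and $\chi\equiv 1$ near $\overline{W}$ (which is exactly your partition-of-unity argument), and then compares the two long exact sequences through the pull-back $\pi^*$, using that $h_q^{U}$ and $h_q^{U\cap V}$ are isomorphisms while Theorem~\ref{thm:main15} controls $h_q^{V}$. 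The paper also fixes the metric on $\wt{\Omega}$ explicitly as $\sigma=\wt{\chi}\sigma'+(1-\wt{\chi})\gamma$, so that $\sigma=\gamma$ outside a compact set in $\wt{V}$; this is precisely the point you flag at the end, and it guarantees that $\pi^*$ is a chain-level isomorphism on $U$ and $U\cap V$. Your identification of the cokernel via the skyscraper property of $\mathcal{R}^p$ matches the paper's final line.
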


Note that singularities in the boundary $b\Omega$ of $\Omega$ are permitted.

\begin{proof}
Let $Y:=\Omega\cap \Sing X$ be the (finite) set of isolated singularities in $\Omega$
and $E:=|\pi^{-1}(Y)|$ the (compact) exceptional set in $\wt{\Omega}$.
Let $W\subset\subset V$ be two smoothly bounded neighborhoods of $Y$ in $\Omega$ with strongly pseudoconvex boundary.
For this, one can just take the intersection of $X$ with small balls centered at the isolated singularities
(in local embeddings). Note that $W$ and $V$ do not need to be connected.
Let $U:=\Omega - \o{W}$ and $\chi$ be a smooth cut-off function with compact support in $V$ which is identically $1$
in a neighborhood of $\o{W}$. Then $U$ does not contain singularities and $\chi$ is constant
close to $\Sing X$.

Let $V^*=V-\Sing X$, $\wt{V}:=\pi^{-1}(V)$, $\wt{U}:=\pi^{-1}(U)$ and $\wt{\chi}=\pi^*\chi$.
Choose any (regular) Hermitian metric $\sigma'$ on $M$.
Then $\sigma:=\wt{\chi} \sigma' + (1-\wt{\chi}) \gamma$ is a (regular) Hermitian metric on $\wt{\Omega}$
which is equivalent to $\gamma$ close to $b\wt{\Omega}$.
All such metrics on $\wt{\Omega}$ are equivalent.
In the following, we work with the two Hermitian complex manifolds $(\Omega^*,h)$ and $(\wt{\Omega},\sigma)$.

Let $L:=L_{|Z|-Z}\rightarrow M$ be the holomorphic line bundle as above,
carrying an arbitrary Hermitian metric. This metric can be arbitrary because $\wt{\Omega}$
is relatively compact in $M$ and we can restrict our attention to $Z:=\pi^{-1}(\Omega\cap \Sing X)$.
This setting avoids problems with singularities in the boundary of $\Omega$.

In this situation, the short sequence \eqref{eq:soc} is exact.
The reason is as follows. Let $\omega\in C^{p,*}_{min}(U\cup V^*)$ be a form on $U\cup V^*$.
Then $\chi \omega \in C^{p,*}_{min}(V^*)$, $(\chi-1)\omega\in C^{p,*}_{min}(U)$
and $(j_U-j_V)(\chi\omega,(\chi-1)\omega)=\omega$. Thus, there is a long exact Mayer-Vietoris sequence
as above on $U\cup V^*=\Omega^*$.

Analogously, we have a short exact sequence \eqref{eq:soc} of complexes of forms with values in the Hermitian
holomorphic line bundle $L_{|Z|-Z}$ on $\wt{\Omega}=\wt{U}\cup \wt{V}$ (by using the smooth cut-off
functions $\wt{\chi}$ and $1-\wt{\chi}$), and this yields another long exact sequence on $\wt{\Omega}$
for forms with values in $L_{|Z|-Z}$.

This leads to the long exact natural commutative diagram (set $L:=L_{|Z|-Z}$)
$$
\begin{xy}
  \xymatrix{
..  \ar[r] & H^{n,q}_{min}(\wt{U}\cap \wt{V},L) \ar[r] &
H^{n,q}_{min}(\wt{U},L) \oplus H^{n,q}_{min}(\wt{V},L)  \ar[r] & 
H^{n,q}_{min}(\wt{\Omega},L) \ar[r] & ..\\
.. \ar[r] & H^{n,q}_{min}(U\cap V^*) \ar[r] \ar[u]_{h_q^{U\cap V}}&
H^{n,q}_{min}(U)\oplus H^{n,q}_{min}(V^*) \ar[r] \ar[u]_{h_q^U \oplus h_q^V} &
H^{n,q}_{min}(\Omega^*) \ar[r] \ar[u]_{h_q^{U\cup V}} & ..}
\end{xy}$$
where all the vertical maps are induced by pull-back of forms
(see the proof of Theorem \ref{thm:main1}).
By our choice of the open coverings and metrics,
the maps $h_q^{U\cap V}$ and $h_q^U$ are isomorphisms for all $0\leq q \leq n$.
By Theorem \ref{thm:main15}, the $h_q^V$ are injective for $0\leq q < n$ with $\coker h_0^V=0$
and $\coker h_p^V=\Gamma(V,R^p \pi_* (\mathcal{K}_M\otimes\OO(|Z|-Z)))$ for $1\leq p < n$.
Thus, the same holds for the maps $h_q^{U\cup V}$, $0\leq q <n$. But
$$\Gamma(V,R^p \pi_* (\mathcal{K}_M\otimes\OO(|Z|-Z))) = \Gamma(\Omega,R^p \pi_* (\mathcal{K}_M\otimes\OO(|Z|-Z)))$$
for all $1\leq p < n$, and this finishes the proof.
\end{proof}

\bigskip

\section{Proof of Theorem \ref{thm:main2}}\label{sec:main2}

\subsection{Cohomology with support on the exceptional set}\label{subsec:HE}

Let $\pi: M\rightarrow X$ be a resolution of singularities as above
and $E=|\pi^{-1}(\Omega\cap \Sing X)|$ the exceptional set.
For a closed subset $K$ of $M$ and a sheaf of abelian groups $\mathcal{F}$, we denote 
by $H_K^*(M,\mathcal{F})$ the flabby cohomology of $\mathcal{F}$ with support in $K$.
In this section, we are interested in the case where $K$ is the exceptional set $E$.
A nice review of cohomology with support on the exceptional set can be found in \cite{OvVa4}, Section 3.2,
a more extensive treatment in \cite{Ka}.

Let $G\subset X$ be an open set and $\wt{G}:=\pi^{-1}(G)$.
In the following, we may assume that $E\subset \wt{G}$.
Since $X$ has only isolated singularities, there exists a (not connected) smoothly bounded strongly pseudoconvex neighborhood $V$ of $E$
in $\wt{G}$ which exhibits $E$ as exceptional set in $\wt{G}$ (see Section \ref{subsec:exceptional}).
$H^*_E(V,\mathcal{F})=H^*_E(M,\mathcal{F})$ by excision, and so we have natural homomorphisms
$$\gamma_j: H^j_E(M,\mathcal{F}) \rightarrow H_{cpt}^j(V,\mathcal{F}).$$
Then:

\begin{thm}\label{thm:karras}
{(Karras \cite{Ka}, Proposition 2.3)}
If $\mathcal{F}$ is a coherent analytic sheaf on $M$ such that $\mbox{depth}_x \mathcal{F}\geq d$ for all $x\in V-E$,
then
$$\gamma_j: H^j_E(M,\mathcal{F}) \rightarrow H_{cpt}^j(V,\mathcal{F})$$
is an isomorphism for $j<d$.
\end{thm}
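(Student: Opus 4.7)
The plan is to set up a long exact comparison sequence linking $H^j_E(M,\mathcal{F})$ and $H^j_{cpt}(V,\mathcal{F})$, and then to apply the depth hypothesis to force the obstruction term to vanish in degrees $j<d$. First, by excision $H^j_E(M,\mathcal{F}) \cong H^j_E(V,\mathcal{F})$ for all $j$. Since $E\subset V$ is compact, any section of $\mathcal{F}$ with support in $E$ is automatically compactly supported in $V$, so there is a natural inclusion of functors $\Gamma_E(V,-)\hookrightarrow \Gamma_{cpt}(V,-)$. Picking a flabby resolution $\mathcal{F}\to\mathcal{I}^\bullet$ and setting $Q^\bullet:=\Gamma_{cpt}(V,\mathcal{I}^\bullet)/\Gamma_E(V,\mathcal{I}^\bullet)$, the short exact sequence $0\to \Gamma_E(V,\mathcal{I}^\bullet) \to \Gamma_{cpt}(V,\mathcal{I}^\bullet)\to Q^\bullet\to 0$ of complexes yields the long exact sequence
\begin{align*}
\cdots \to H^{j-1}(Q^\bullet) \to H^j_E(V,\mathcal{F}) \overset{\gamma_j}{\longrightarrow} H^j_{cpt}(V,\mathcal{F}) \to H^j(Q^\bullet) \to \cdots,
\end{align*}
reducing the theorem to the vanishing $H^j(Q^\bullet)=0$ for $0\leq j<d$.

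Next, I would translate $H^j(Q^\bullet)$ into cohomological data on the open complex manifold $W:=V-E$. With $\iota:W\hookrightarrow V$ the open inclusion, applying $R\Gamma_{cpt}(V,-)$ to the localization triangle $R\underline{\Gamma}_E\mathcal{F}\to \mathcal{F}\to R\iota_*\iota^*\mathcal{F}$ and using $R\Gamma_{cpt}(V,R\underline{\Gamma}_E\mathcal{F})=R\Gamma_E(V,\mathcal{F})$ (valid because the first complex has cohomology supported on the compact set $E$) identifies $Q^\bullet$ up to quasi-isomorphism with $R\Gamma_{cpt}(V,R\iota_*\iota^*\mathcal{F})$. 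A spectral-sequence argument then reduces the required vanishing of $H^j(Q^\bullet)$ for $j<d$ to a vanishing statement for compactly supported cohomology of $\mathcal{F}|_W$ on the complex manifold $W$.

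The depth hypothesis supplies this vanishing: since $\mathrm{depth}_x\mathcal{F}\geq d$ for every $x\in W$, the Auslander-Buchsbaum formula yields $\mathcal{E}xt^k_{\mathcal{O}_M}(\mathcal{F},\omega_M)=0$ on $W$ for all $k>n-d$. Combined with Serre duality on $W$ in the form of Ramis-Ruget-Verdier (supplying a non-degenerate pairing between $H^j_{cpt}(W,\mathcal{F})$ and the hypercohomology of $R\mathcal{H}om(\mathcal{F},\omega_M)$ in complementary degree), the Ext-vanishing forces $H^j_{cpt}(W,\mathcal{F})=0$ for $j<d$, giving the required $H^j(Q^\bullet)=0$ by the identification of the previous paragraph. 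Inserted in the long exact sequence, this is exactly $\gamma_j$ being an isomorphism for $j<d$.

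The main technical obstacle is the analytic closed-range step required for Serre duality on the non-Stein, non-compact open manifold $W=V-E$: one must establish separation of the $\dq$-cohomology (equivalently closed range of $\dq$) in the relevant degrees so that the duality pairing is non-degenerate. Here the strong pseudoconvexity of $V$, together with Grauert's coherence and direct-image theorems applied to both $\mathcal{F}$ and its Grothendieck dual, furnish the required finite-dimensionality. This is the analytic core of the argument; the rest is formal manipulation of the localization triangle and an application of Auslander-Buchsbaum.
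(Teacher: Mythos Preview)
The paper does not prove this statement; it is quoted from Karras \cite{Ka} and used as a black box. So the only question is whether your argument stands on its own. It does not, and the gap is concrete.

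Your identification of $H^j(Q^\bullet)$ with compactly supported cohomology on $W=V\setminus E$ is wrong. For an open immersion $\iota\colon W\hookrightarrow V$ and a sheaf $\mathcal{G}$ on $W$, the support in $V$ of a section $s\in\Gamma(V,\iota_*\mathcal{G})=\Gamma(W,\mathcal{G})$ is the closure $\overline{\supp_W(s)}^{\,V}$. Hence $\Gamma_{cpt}(V,\iota_*\mathcal{G})=\Gamma_\Phi(W,\mathcal{G})$, where $\Phi$ is the family of closed subsets of $W$ whose closure in $V$ is compact. Such sets may accumulate freely on $E$; they are only required to stay away from $bV$. Since $\iota_*$ preserves flabbiness, one gets $R\Gamma_{cpt}(V,R\iota_*\iota^*\mathcal{F})\simeq R\Gamma_\Phi(W,\mathcal{F})$, so $H^j(Q^\bullet)\cong H^j_\Phi(W,\mathcal{F})$, \emph{not} $H^j_{cpt}(W,\mathcal{F})$. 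The two support families differ exactly along $E$, which is the whole point of the theorem.

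Worse, the vanishing you claim, $H^j_{cpt}(W,\mathcal{F})=0$ for $j<d$, is false. Take $M$ the blow-up of $\C^2$ at the origin, $V=\pi^{-1}(B)$ with $B$ the unit ball, $E\cong\mathbb{P}^1$, $\mathcal{F}=\OO_M$, so $d=n=2$ and $W\cong B\setminus\{0\}$. The compactly supported long exact sequence for $\{0\}\hookrightarrow B$ gives
\[
0=H^0_{cpt}(B,\OO)\to H^0(\{0\},\C)\to H^1_{cpt}(B\setminus\{0\},\OO)\to H^1_{cpt}(B,\OO)=0,
\]
so $H^1_{cpt}(W,\OO)\cong\C\neq 0$. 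Your own Serre duality computation confirms this rather than contradicting it: with $\mathcal{F}=\OO$ the only nonzero $\mathcal{E}xt$ sheaf is $\mathcal{E}xt^0(\OO,\omega)=\omega$, and $\mathbb{H}^{n-1}(W,\omega)=H^1(B\setminus\{0\},\OO)$ is infinite-dimensional. The vanishing $\mathcal{E}xt^k=0$ for $k>n-d$ does not force $\mathbb{H}^{n-j}(W,R\mathcal{H}om(\mathcal{F},\omega))=0$, because $W$ is neither compact nor Stein and the groups $H^{>0}(W,\mathcal{E}xt^k)$ survive. So both the identification step and the duality step fail.

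What one actually has to prove is $H^j_\Phi(W,\mathcal{F})=0$ for $j<d$, and this is where the depth hypothesis enters locally, through the characterisation $\mathrm{depth}_x\mathcal{F}\geq d\Leftrightarrow \mathcal{H}^j_{\{x\}}(\mathcal{F})_x=0$ for $j<d$, together with a comparison of support families near $bV$ (not near $E$). Global Serre duality on the awkward open manifold $W$ is not the right tool here.
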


On the other hand we have seen that
\begin{eqnarray*}
\Gamma(G,R^p\pi_* \mathcal{K}_M\otimes\OO(|Z|-Z)) = \lim_{\substack{\longrightarrow\\ U}} H^p(\pi^{-1}(U),\mathcal{K}_M\otimes\OO(|Z|-Z))
\end{eqnarray*}
for $p\geq 1$,
where the limit is over open neighborhoods of $\Sing X\cap G$. But then the natural maps
\begin{eqnarray}\label{eq:alphap}
\alpha_p: H^p(V,\mathcal{K}_M\otimes\OO(|Z|-Z)) \rightarrow \Gamma(G,R^p\pi_* \mathcal{K}_M\otimes\OO(|Z|-Z))
\end{eqnarray}
are isomorphisms for $p\geq 1$ by Theorem \ref{thm:laufer}.

By use of Serre duality, there exists a non-degenerate pairing
\begin{eqnarray}\label{eq:serre2}
H^p(V,\mathcal{K}_M\otimes\OO(|Z|-Z)) \times H^{n-p}_{cpt} (V, \OO(Z-|Z|)) \rightarrow \C.
\end{eqnarray}
Since $\mbox{depth} \OO(Z-|Z|) =n$, we can combine Theorem \ref{thm:karras} with
\eqref{eq:alphap} and \eqref{eq:serre2} and obtain:

\begin{thm}\label{thm:dualityE}
In the situation described above, there is a natural non-degenerate pairing
$$\Gamma(G,R^p\pi_* \mathcal{K}_M\otimes\OO(|Z|-Z)) \times H_E^{n-p}(\wt{G},\OO(Z-|Z|)) \rightarrow \C$$
for $1\leq p\leq n$ which is induced by Serre duality and the natural isomorphisms
\begin{eqnarray*}
\alpha_p: && H^p(V,\mathcal{K}_M\otimes\OO(|Z|-Z)) \rightarrow \Gamma(G,R^p\pi_* \mathcal{K}_M\otimes\OO(|Z|-Z)),\\
\gamma_{n-p}: && H^{n-p}_E(M,\OO(Z-|Z|)) \rightarrow H_{cpt}^{n-p}(V,\OO(Z-|Z|).
\end{eqnarray*}
\end{thm}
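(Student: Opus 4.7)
The plan is to obtain the pairing as the composition of three natural identifications already assembled in the section: the isomorphism $\alpha_p$ of \eqref{eq:alphap}, the Serre duality pairing \eqref{eq:serre2} on the strongly pseudoconvex manifold $V$, and the isomorphism $\gamma_{n-p}$ from Theorem \ref{thm:karras}. Once each of these three is confirmed, composition produces the asserted non-degenerate pairing.

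First I would verify that $\alpha_p$ is an isomorphism for $p\geq 1$. Since $\Gamma(G,R^p\pi_*(\mathcal{K}_M\otimes\OO(|Z|-Z)))$ is by definition the direct limit of $H^p(\pi^{-1}(U),\mathcal{K}_M\otimes\OO(|Z|-Z))$ over Stein neighborhoods $U$ of $\Sing X\cap G$, it suffices to check that each restriction $H^p(V,\cdot)\to H^p(\pi^{-1}(U),\cdot)$ is an isomorphism in positive degree. But for $U$ small enough, $\pi^{-1}(U)\subset V$ is a holomorphically convex neighborhood of $E$, and the proper map $\pi$ exhibits $E$ as exceptional set over the Stein space $U$. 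Theorem \ref{thm:laufer} applied to the coherent sheaf $\mathcal{K}_M\otimes\OO(|Z|-Z)$ then yields the required isomorphism, and passing to the limit gives the claim for $\alpha_p$.

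Next, classical Serre duality on the strongly pseudoconvex smoothly bounded domain $V\subset M$, applied to the holomorphic line bundle $L_{|Z|-Z}$, supplies the non-degenerate pairing \eqref{eq:serre2}; the closed range condition needed is standard on such domains. Finally, $\OO(Z-|Z|)$ is invertible and hence locally free, so has depth equal to $n=\dim M$ at every point of $V$. Theorem \ref{thm:karras} therefore gives that $\gamma_j$ is an isomorphism for all $j<n$, in particular $\gamma_{n-p}$ is an isomorphism whenever $p\geq 1$. By excision, $H^{n-p}_E(\wt{G},\OO(Z-|Z|))\cong H^{n-p}_E(M,\OO(Z-|Z|))$, and stringing the three maps together with the Serre pairing yields the asserted duality for $1\leq p\leq n-1$.

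The residual endpoint $p=n$ is trivial. The exceptional fibres of $\pi$ have complex dimension at most $n-1$, so $R^n\pi_*$ vanishes on any coherent sheaf, whence $\Gamma(G,R^n\pi_*(\mathcal{K}_M\otimes\OO(|Z|-Z)))=0$. On the other side, $H^0_E(\wt{G},\OO(Z-|Z|))=0$ by the identity theorem, since any holomorphic section of an invertible sheaf vanishing on the open set $\wt{G}\setminus E$ must vanish identically on every connected component. Both sides of the asserted pairing are therefore zero, and the statement is trivially non-degenerate. I do not anticipate any real obstacle: the entire proof is essentially a bookkeeping composition of the three named results, with the only delicate point being the depth verification, which is immediate for the invertible sheaf $\OO(Z-|Z|)$.
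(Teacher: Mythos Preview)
Your proposal is correct and follows essentially the same approach as the paper: the paper simply states that since $\operatorname{depth}\OO(Z-|Z|)=n$, one combines Theorem~\ref{thm:karras}, the isomorphism \eqref{eq:alphap}, and the Serre pairing \eqref{eq:serre2} to obtain the result, and you have spelled out exactly this composition. Your separate treatment of the endpoint $p=n$ is not needed---the general argument already covers it, since $\gamma_{0}$ is an isomorphism by Theorem~\ref{thm:karras} (as $0<n$) and $\alpha_n$ is an isomorphism by Theorem~\ref{thm:laufer}---but it is correct and does no harm.
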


\smallskip

\subsection{The natural surjection onto $H^{0,q}(\Omega^*)$}
We use the notation
$$\mathcal{R}^p := R^p\pi_* \big(\mathcal{K}_M\otimes\OO(|Z|-Z)\big)$$
for $p\geq 1$ and set $\mathcal{R}^0 = 0$ for ease of notation
but emphasize that this is clearly not the direct image of $\mathcal{K}_M\otimes \OO(|Z|-Z)$.

Assume first that $\Omega$ is strongly pseudoconvex with smooth boundary that does not intersect
the singular set or that $\Omega=X$ is compact.

By Theorem \ref{thm:main19} or Theorem \ref{thm:mainc}, respectively, there is an exact sequence
of natural homomorphisms (induced by pull-back of forms and the residue map)
\begin{eqnarray}\label{eq:dual01}
0 \rightarrow H^{n,p}_{min}(\Omega^*) \overset{h_p}{\longrightarrow} H^{n,p}_{min}(\wt{\Omega},L_{|Z|-Z})
\overset{\pi_p}{\longrightarrow} \Gamma(\Omega,\mathcal{R}^p) \rightarrow 0,
\end{eqnarray}
where $\Omega^*=\Omega-\Sing X$ and $\wt{\Omega}= \pi^{-1}(\Omega)$.

Let $V\subset\subset \wt{\Omega}$ be a strongly pseudoconvex (smoothly bounded) neighborhood of the exceptional set
in $\wt{\Omega}$ as in Section \ref{subsec:HE}.
For $p\geq 1$,
the map $\pi_p$ in \eqref{eq:dual01} factors through $H^p(V,\mathcal{K}_M\otimes\OO(|Z|-Z))$
by use of the isomorphism $\alpha_p$ in Theorem \ref{thm:dualityE}:
\begin{eqnarray}\label{eq:dual02}
\pi_p: H^{n,p}_{min}(\wt{\Omega},L_{|Z|-Z}) \overset{r_p}{\longrightarrow}
H^p(V,\mathcal{K}_M\otimes\OO(|Z|-Z)) \overset{\alpha_p}{\longrightarrow} \Gamma(\Omega,\mathcal{R}^p),
\end{eqnarray}
where $r_p$ is the natural restriction map. Ignore this factorization in the case $p=0$.

Note that all the cohomology groups in \eqref{eq:dual01} and \eqref{eq:dual02}
are finite-dimensional vector spaces. For any such vector space $G$, we denote the dual space of
(continuous) linear mappings to $\C$ by
$$G^* := \Hom(G,\C).$$
By finite-dimensionality,
the contravariant functor $G\mapsto G^*$ is not only left-exact, but also right-exact.
So, \eqref{eq:dual01} yields the natural exact sequence
\begin{eqnarray}\label{eq:dual03}
0  \rightarrow \Gamma(\Omega,\mathcal{R}^p)^* \overset{\pi_p^*}{\longrightarrow}
H^{n,p}_{min}(\wt{\Omega},L_{|Z|-Z})^*  \overset{h_p^*}{\longrightarrow}
H^{n,p}_{min}(\Omega^*)^* \rightarrow 0,
\end{eqnarray}
where $\pi_p^*$ factors through $H^p(V,\mathcal{K}_M\OO(|Z|-Z))^*$ by use of the isomorphism $\alpha_p^*$:
\begin{eqnarray}\label{eq:dual04}
\pi_p^*: \Gamma(\Omega,\mathcal{R}^p)^* \overset{\alpha_p^*}{\longrightarrow}
H^p(V,\mathcal{K}_M\otimes\OO(|Z|-Z))^* \overset{r_p^*}{\longrightarrow}
H^{n,p}_{min}(\wt{\Omega},L_{|Z|-Z})^*.
\end{eqnarray}
Again, ignore this factorization if $p=0$.
The situation is quite comfortable since all the groups under consideration are of finite dimension.
Anyway, the argument remains valid if only the corresponding $\dq$-operators all have closed range.
Then, all the spaces under consideration inherit a natural locally convex topology (see \cite{Rd}, Theorem 1.41),
and then the functor $G\mapsto G^*$ is right-exact by the Hahn-Banach Theorem (see \cite{Rd}, Theorem 3.6).

Now, consider the commutative diagram of natural mappings (let $q=n-p$)
$$
\begin{xy}
  \xymatrix{
0  \ar[r] &  \Gamma(\Omega,\mathcal{R}^p)^* \ar[r]^{\pi_p^*} &
H^{n,p}_{min}(\wt{\Omega},L_{|Z|-Z})^*  \ar[r]^{h_p^*} & 
H^{n,p}_{min}(\Omega^*)^* \ar[r] & 0\\
0 \ar[r] & H^{q}_E(\wt{\Omega},\OO(Z-|Z|)) \ar[r]^{i_q} \ar[u]^{\cong}&
H^{0,q}_{max}(\wt{\Omega},L_{Z-|Z|}) \ar@{.>}[r]^{s_q} \ar[u]^{\cong}&
H^{0,q}_{max}(\Omega^*) \ar[r] \ar[u]^{\cong}& 0,}
\end{xy}$$
where the vertical isomorphisms are induced by the natural non-degenerate pairings
from Theorem \ref{thm:dualityE}, Theorem \ref{thm:l2duality1} for forms with values in holomorphic line bundles
(see \cite{eins}, Theorem 2.3) and Theorem \ref{thm:l2duality1} itself.\footnote{
Here, we use that $\Omega$ is compact or has a strongly pseudoconvex boundary
so that the $\dq$-operators under consideration have closed range and the duality statements can be applied.}
The map $i_q$ is the isomorphism $\gamma_{n-p}$ from Theorem \ref{thm:dualityE} followed
by a natural mapping:
\begin{eqnarray}\label{eq:dual05}
i_q: H^{q}_E(\wt{\Omega},\OO(Z-|Z|)) \overset{\gamma_{n-p}}{\longrightarrow}
H^q_{cpt}(V,\OO(Z-|Z|)) \longrightarrow H^{0,q}_{max}(\wt{\Omega},L_{Z-|Z|}).
\end{eqnarray}
Put $0$ instead of $H^n_E(\wt{\Omega},\OO(Z-|Z|))$ in the case $p=0$ (i.e. if $q=n$).
Now, define 
\begin{eqnarray*}
s_q: H^{0,q}_{max}(\wt{\Omega},L_{Z-|Z|}) \longrightarrow H^{0,q}_{max}(\Omega^*)
\end{eqnarray*}
to be the natural mapping which makes the diagram commutative.
A realization of this mapping on the level of forms can be obtained
by working with harmonic representatives of cohomology classes.

The lower line of the natural commutative diagram must be exact for the upper line is exact.
That proves the second main Theorem \ref{thm:main2} if $\Omega$ is compact or has a strongly pseudoconvex 
smooth boundary without singularities.

The general statement follows exactly as in the proof of Theorem \ref{thm:main19}
by use of the Mayer-Vietoris sequence for the $\dq_{max}$-cohomology.

\bigskip
We may add a few words about the commutative diagram. 
Consider a cohomology class
$[a]\in H^q_E(\wt{\Omega},\OO(Z-|Z|))$.
Then $[a]$ is represented by a $(0,q)$-form with compact support in $V$:
$$a\in \Gamma_{cpt}(V,\mathcal{C}^{0,q}_\sigma(L_{Z-|Z|}))$$
and defines a functional in $\Gamma(\Omega,\mathcal{R}^p)^*$ by
$$[a]: [b] \mapsto \int_V a\wedge b,$$
where $b\in \Gamma(V,\mathcal{C}^{n,p}_\sigma(L_{|Z|-Z}))$ is a representative of $[b]\in\Gamma(\Omega,\mathcal{R}^p)$ on $V$.
Since $a$ has compact support in $V$, it defines also the functional $\pi_p^* [a]\in H^{n,p}_{min}(\wt{\Omega},L_{|Z|-Z})^*$ by the assignment
$$\pi_p^*[a]: [c] \mapsto \int_{\wt{\Omega}} a\wedge c,$$
where $c\in\Gamma(\wt{\Omega},\mathcal{C}^{n,p}_\sigma(L_{|Z|-Z}))$ is a representative of $[c]\in H^{n,p}_{min}(\wt{\Omega},L_{|Z|-Z})$.
On the other hand, the class $i_q[a] \in H^{0,q}_{max}(\wt{\Omega},L_{Z-|Z|})$ is also represented by the form $a$.
But then $\pi_p^*[a] = i_q[a]$ in $H^{n,p}_{min}(\wt{\Omega},L_{|Z|-Z})^*$, showing that the left hand square of the diagram is commutative.

Assume that $i_q[a]=0$ in $H^{0,q}_{max}(\wt{\Omega},L_{Z-|Z|})$,
i.e. that there exists a form $f\in\Gamma(\wt{\Omega},\mathcal{C}^{0,q-1}_\sigma(L_{Z-|Z|}))$
such that $\dq_{max} f= a$. Let $\chi$ be a smooth cut-off function with compact support in $V$
which is identically $1$ in a neighborhood of the exceptional set $E$.
Then $[a]=[a-\dq(\chi f)]$ in $H^q_E(\wt{\Omega},\OO(Z-|Z|))$.
But now $a-\dq(\chi f)$ is vanishing in a neighborhood of $E$ meaning that it can be considered as a form
in $\Gamma_{cpt}(V, \mathcal{C}^{0,q}_\sigma)$.
But $H^q_{cpt}(V,\OO) \cong H^{n-q}(V,\mathcal{K}_M)=0$ by Takegoshi's theorem.
So there exists $g\in \Gamma_{cpt}(V,\mathcal{C}^{0,q-1}_\sigma) \subset \Gamma_{cpt}(V,\mathcal{C}^{0,q-1}_\sigma(L_{Z-|Z|}))$
such that $\dq g = a-\dq(\chi f)$.
Thus $[a]=[a-\dq(\chi f)]=0 \in H^q_E(\wt{\Omega},\OO(Z-|Z|))$ showing again that $i_p$ is injective.


\end{document}